\numberwithin{equation}{section}
\def\q{\quad}
\def\qed{\unskip\nobreak\hfill$\Box$\par\addvspace{\medskipamount}}
\def\s{{\sigma}}
\def\p{{\rho}}
\def\td{\mathrm{d}}
\newcommand{\be}{\begin{equation}}
\newcommand{\ee}{\end{equation}}
\newcommand{\bea}{\begin{eqnarray}}
\newcommand{\eea}{\end{eqnarray}}
\newcommand{\beas}{\begin{eqnarray*}}
\newcommand{\eeas}{\end{eqnarray*}}
\newtheorem{theorem}{Theorem}[section]
\newtheorem{definition}[theorem]{Definition}
\newtheorem{Prop}[theorem]{Proposition}
\newtheorem{proposition}[theorem]{Proposition}
\newtheorem{lemma}[theorem]{Lemma}
\newtheorem{Lemma}[theorem]{Lemma}
\newtheorem{as}[theorem]{Assumption}
\newtheorem{remark}[theorem]{Remark}
\newtheorem{example}[theorem]{Example}
\newtheorem{examples}[theorem]{Example}
\newtheorem{foo}[theorem]{Remarks}
\newenvironment{Example}{\begin{example}\rm}{\end{example}}
\newenvironment{Remark}{\begin{remark}\rm}{\end{remark}}
\newenvironment{As}{\begin{as}\rm}{\end{as}}
\newenvironment{proof}[1]{\addvspace{\medskipamount}\par\noindent{\it Proof#1}.}
{\unskip\nobreak\hfill$\Box$\par\addvspace{\medskipamount}}
\newcommand{\IR}{\int_{\mathbb{R}^k\setminus\{0\}}}
\newcommand{\E}[1]{{\mathbb{E}}\left[#1\right]}
\DeclareMathOperator{\esssup}{ess\,sup}
\def\F{\mathcal{F}}
\def\Q{\mathcal{Q}}
\def\A{\mathcal{A}}
\def\R{\mathbb{R}}
\def\S{\mathcal{S}}
\begin{document}

\title{{\bf\large On Dynamic Deviation Measures and Continuous-Time Portfolio Optimisation}}
\author{Martijn Pistorius\footnote{Department of Mathematics, Imperial College London, 
m.pistorius@imperial.ac.uk}\qquad\qquad Mitja Stadje\footnote{{Faculty of Mathematics and Economics,
University of Ulm}, mitja.stadje@uni-ulm.de
\newline
{\em Keywords and phrases.} Deviation measure, time-consistency, portfolio optimisation, extended HJB equation.
\newline
{\em (2010) AMS Classification.} 60H30, 90C46, 91A10, 91B70, 93E99.
}
}

\date{}

\maketitle
\begin{spacing}{0.85}
\begin{quote}
{\small{\bf Abstract.} In this paper we propose the notion of {\em dynamic deviation measure}, 
as a dynamic time-consistent extension of the (static) notion of deviation measure. 
To achieve time-consistency we require that a dynamic deviation measures satisfies
a generalised conditional variance formula. We show that, under a domination condition,    
dynamic deviation measures are characterised as the solutions to a certain class of 
backward SDEs. We  establish for any dynamic deviation measure an integral representation, and 
derive a dual characterisation result  in terms of {\em additively $m$-stable} dual sets. 
Using this notion of dynamic deviation measure 
we formulate a dynamic mean-deviation portfolio optimisation problem 
in a jump-diffusion setting and identify a subgame-perfect Nash equilibrium strategy that is linear 
as function of wealth by deriving and solving an associated extended HJB equation.
}
\end{quote}
\end{spacing}
\section{Introduction}
One traditional way of thinking about risk is in terms of the extend that random realisations 
deviate from the mean.  In portfolio theory as initiated in Markowitz~(1952), for instance, risk is quantified as the variance or standard deviation of the return. In the setting of the Black-Scholes~(1973) model, it is the volatility
parameter, which is equal to the standard deviation of the log-stock price at unit time, 
that is often taken as description
of the risk.  Alternative approaches to quantification of risk that have emerged more recently 
also take into account other aspects of the return distribution such as heavy tails and asymmetry. 
In this context an axiomatic framework for (general) deviation measures 
was introduced and developed in Rockafellar {\em et al.}~(2006a), which form a certain class of non-negative positively homogeneous (static) operators acting on square-integrable random variables. General deviation measures allow to distinguish between upper and lower deviations from the mean, generalising standard deviation. Various aspects of portfolio optimisation and financial decision making
under general deviation measures have been explored in the literature, in particular regarding CAPM, asset betas, one- and 
two-fund theorems and equilibrium theory; see also among many others Cheng {\em et al.}~(2004), Rockafellar {\em et
al.}~(2006b,\,2006c,\,2007),  M\"arket and Schultz~(2005), Stoyanov {\em et al.}~(2008), Grechuk {\em et al.}~(2009), 
or Grechuk and Zabarankin (2013,\,2014). 
In this paper we present an axiomatic approach to deviation measures in 
\emph{dynamic} continuous-time settings. We show that such dynamic deviation measures  
admit in general a dual robust representation and are linked to a certain 
family of backward stochastic differential equations (BSDEs), if a certain domination conditon is satisfied. 
Subsequently, we use this notion of dynamic deviation measure to phrase a mean-deviation portfolio optimisation   
problem in a jump-diffusion setting and identify for this problem a
subgame-perfect Nash equilibrium portfolio allocation strategy by means of an associated novel type of extended Hamilton-Jacobi-Bellman equation, which complements the ones studied in Bj\"{o}rk and Murgoci (2010). 
\smallskip

\noindent{\bf (Conditional) deviation measures.} Dynamic deviation measures are given in terms of 
conditional deviation measures, which are in turn a conditional version of the notion of (static) deviation measure defined in Rockafellar {\em et al.}~(2006a) that we describe next.  On a filtered probability space $(\Omega,\F,(\F_{t})_{t\in [0,T]},\mathbb P)$, where $T>0$ denotes the horizon, consider the (risky) positions described by elements in $L^{p}(\F_t)$, $t\in[0,T]$, $p\ge0$, the space of $\F_t$-measurable random variables $X$ such that $\E{|X|^p}< \infty$); by $L^{p}_{+}(\F_t)$, $L^{\infty}(\F_t)$ and $L^{\infty}_+(\F_t)$ are denoted the subsets of non-negative, bounded and non-negative bounded elements in $L^{p}(\F_t)$. The definition is given as follows:
\begin{definition}
For any given $t\in [0,T]$, $D_t:L^2(\F_T)\to L^2_+(\F_t)$ is called 
an {\em $\mathcal F_t$-conditional deviation measure}
if it is {\em normalised} ($D_t(0) = 0$)
and the following
properties are satisfied:
\begin{itemize}
\item[{\rm(D1)}]
{\em Translation Invariance}: $D_t(X + m) = D_t(X)$ for any  $m\in
L^\infty(\F_t)$;
\item[{\rm(D2)}] {\em Positive Homogeneity}: $D_t(\lambda X) = \lambda D_t(X)$ for any $X \in L^2(\F_T)$ and $\lambda \in L^\infty_+(\F_t)$;
\item[{\rm(D3)}] {\em Subadditivity}: $D_t(X + Y) \leq D_t(X) + D_t(Y)$ for any $X, Y \in L^2(\mathcal F_T)$;
\item[{\rm(D4)}] {\em Positivity}: $D_t(X) \geq 0$ for any $X\in L^2(\mathcal F_T),$ and $D_t(X) = 0$ if and only if $X$ is $\F_t$-measurable.
\end{itemize}
\end{definition} 
If $\mathcal F_0$ is trivial, $D_0$ is a deviation measure in the sense of Definition~1 in 
Rockafellar {\em et al.}~(2006a). The value $D_t(X)=0$, we recall, corresponds to the riskless state of no uncertainty, and axiom (D1) can be interpreted as the requirement that adding to a position $X$ a constant (interpreted as cash)  should not increase the risk. Furthermore, it follows similarly as in Rockafellar {\em et al.} (2006a) 
that, if $D$ satisfies (D2)--(D3), 
(D1) holds if and only if $D_t(m)=0$ for any $m\in L^2(\F_t)$.  In other words, constants do not carry any risk.
Moreover, it is well known that if (D2) holds, (D3) is equivalent to \emph{conditional convexity}, that is,
for any $X,Y\in L^2(\F_T)$ and 
any $\lambda\in L^\infty(\F_t)$ that is such that $0\leq\lambda\leq 1$ 
$$D_{t}(\lambda X+(1-\lambda)Y)\leq \lambda D_{t}(X)+(1-\lambda)D_{t}(Y). $$
The property of convexity is often given the interpretation that
diversification of a position should not increase its riskiness. 
We also note that (D2) implies 
that, for any $X_1, X_2\in L^2(\F_T)$, $D_t(I_AX_i) = I_AD_t(X_i)$, $i=1,2$, 
where $I_A$ denotes the indicator of the set $A$, so that\footnote{To see that \eqref{localproperty} holds note that by (D2) 
$I_AD_t(1_A X_1 + 1_{A^c} X_2)= D_t (I_A(I_AX_1 + 1_{A^c}X_2) = D_t(I_A X_1)=I_A D_t(X_1)$; 
similarly, we have $I_{A^c}D_t(1_A X_1 + 1_{A^c} X_2) = I_{A^c} D_t(X_2).$}
\be
\label{localproperty}
D_t(I_A
X_1+I_{A^c}X_2)=I_AD_t(X_1)+I_{A^c}D_t(X_2), \qquad A\in \F_t.
\ee
In the analysis typically also a lower semi-continuity condition is imposed, the conditional version of which is given as follows:
\begin{itemize}
\item[{\rm (D5)}] {Lower Semi-Continuity:} 
{\it If $X^n$ converges to $X$ in $L^2(\F_T)$ then
$  D_t(X)\leq \liminf_{n}D_t(X^n)$. }
\end{itemize}

\noindent{\bf Dynamic deviation measures.} 
We impose additional structure 
on a given family of $\mathcal F_t$-conditional deviation measures 
in order to ensure it satisfies a form of time-consistency.
One recursive structure that has been succesfully deployed 
in among others the case of mean-variance portfolio optimisation is the one embedded in the {\em conditional variance formula}; 
see for instance Basak and Chabakauri (2010), Wang and Forsyth (2011), Li {\em et al.} (2012) or Czichowsky (2013). 
Inspired by this recursive structure
we require that a collection $(D_t)_{t\in [0,T]}$ of conditional deviation measures satisfy the following generalisation 
of the {conditional variance formula}: 
\begin{itemize}
\item[(D6)] {Time-Consistency:} {\it For all $s,t\in [0,T]$ with $s\leq
t$ and $X\in L^2(\mathcal F_T)$}
\begin{equation}\label{eq:gendev}
D_{s}(X)=D_s(\E{X|\F_t})+\E{D_t(X)|\F_s}.
\end{equation}
\end{itemize}
\begin{Remark}\label{rem1} 
{\bf (i)} As $D(X)\geq 0$, (D6) implies that $(D_s(X))_{s\in[0,T]}$ is a
supermartingale, which implies in particular that $D$ has a c\`adl\`ag
modification. 

\noindent{\bf (ii)} It follows by standard arguments that (D6) for $s=0$ already uniquely determines 
a dynamic deviation measure $D$. 
For suppose that $D_0$ and 
$X\in L^2(\F_T)$ are given and besides $(D_t(X))_{t\in[0,T]}$ there exists a collection of square-integrable
$\F_s$-measurable random variables $(D'_t(X))_{t\in[0,T]}$ satisfying (D6) for $s=0$, then $D_t(X) = D'_t(X)$ for all 
$t\in[0,T]$. Indeed, if the $\F_t$-measurable set $A':=\{D'_t(X)> D_t(X)\}$ were to have  
non-zero measure, then by (\ref{localproperty}) and (D6) we find
$$\E{I_{A'} D_t(X)}=\E{D_t(I_{A'} X)}=D_0(I_{A'} X)-D_0(\E{I_{A'} X|\F_t})
=\E{D'_t(I_{A'} X)}=\E{I_{A'} D'_t(X)},$$
which is a contradiction to the definition of the set $A'$. Similarly, it may be seen that the
set $\{D'_t(X)< D_t(X)\}$ has measure zero.

\noindent{\bf (iii)} Since $D_0$ is convex, lower semi-continuous and finite, $D_0$ is continuous in $L^2(\F_T)$ (see Proposition~2 in Rockafellar {\em et al.} (2006)).	 
\end{Remark}

\noindent We arrive thus at the following definition of dynamic deviation measure:

\begin{definition}
A family $(D_t)_{t\in [0,T]}$ is called a {\em dynamic deviation measure} if 
$D_t$, $t\in [0,T]$, are {$\mathcal F_t$-conditional  deviation measures} 
satisfying (D5) and (D6).
\end{definition} 
One way to construct examples of dynamic deviation measures is in terms of the solutions of a certain type of BSDEs. Such solutions, when seen as function of the  corresponding random variable, we will call $g$-deviation measures (where $g$ is the driver function of the BSDE in question). We show in Theorem~\ref{Main} that, under a domination condition, any dynamic deviation measure is equal to a $g$-deviation measure for some driver function $g$. This result may be considered to be an analogue of the link between the dynamic coherent and convex risk measures and $g$-expectations; see Coquet {\em et al.}~(2002) and Royer~(2006) (for contributions on convex risk measures and $g$-expectations and their generalizations see for instance Barrieu and El Karoui~(2005,2009), Rosazza Gianin~(2006), Kl\"oppel and Schweizer~(2007), Jiang~(2008), El Karoui and Ravenelli~(2009), Bion-Nadal and Magali (2012) or Pelsser and Stadje~(2014)). By drawing on dual robust representation results we also establish characterisations of general dynamic deviation measures that are valid without the domination condition (see Theorems~\ref{Nondom}, \ref{mstable}  and \ref{theoremdual1}).

\begin{remark}[Relation to dynamic coherent risk-measures]{\rm
By generalising arguments given in Rockafellar {\em et al.}~(2006) to the $\mathcal F_t$-conditional context, we note that 
any $\mathcal F_t$-conditional deviation measure is equal to the sum of a conditional expectation and a risk-measure
$\p_t$ that satisfies a ($\mathcal F_t$-conditional) lower range dominance condition
(that is,  $\rho_t(X) \ge \E{X|\mathcal F_t}$ for all $t\in[0,T]$ and $X\in L^2(\F_T)$ with equality 
on sets in $\mathcal F_t$ on which $X$ is constant). 
As the notions of time-consistency differ in cases of dynamic deviation and dynamic risk measures
this relation does not carry over to the dynamic case. A collection $(\p_t)_{t\in [0,T]}$, $\p_t:L^2(\F_T)\to
L^2_+(\F_t)$, forms a family of dynamic coherent risk measures, we recall, if, for every $t\in [0,T]$,
$\p_t$ is positively homogeneous and subadditive (as in (D2) and (D3)), and is (dynamically) monotone and translation invariant
in the following sense:
\begin{itemize}
\item[] Translation Invariance: For all $X\in L^2(\F_T)$ and $m\in
L^\infty(\F_t)$ we have $\p_t(X+m)=\p_t(X)-m$.
\item[] Monotonicity: If $X,Y\in L^2(\F_T)$ and $X\leq Y$ then
$\p_t(X)\geq \p_t(Y).$
\end{itemize}
For a discussion of these axioms see Artzner {\em et al.} (1999). 
Note that by (D1)--(D2) $D_t(m)=0 $ for any $m\in L^2(\F_t)$, so that dynamic 
deviation measures do not satisfy the axiom of monotonicity.
While for dynamic deviation measures time-consistency is defined in terms of 
the generalised conditional variance formula~\eqref{eq:gendev}, in the theory of 
dynamic coherent and convex risk measures 
a recursive tower-type property is the relation strongly time-consistent dynamic risk-measures should 
satisfy. Specifically, a dynamic coherent or convex risk measures is called {strongly time-consistent}, we recall, if
\be
\label{tc}
\p_s(\p_t(X))=\p_s(X) \quad \mbox{ for }s\leq t,
\ee
see for instance among many others Chen and Epstein (2002), Riedel (2004), Delbaen (2006), Artzner {\em et al.} (2007), F\"ollmer and Schied (2011), Cheridito and Kupper (2011). 
Note that a dynamic deviation measure $D$ is not strongly time-consistent (in view of the fact that $D_t(D_T(X))=D_t(0)=0$ for $t<T$). Interestingly, as shown in Proposition~\ref{theoremdual2}, a collection of conditional deviation measures satisfies (D6) if and only if in their dual representations the dual sets are convex, closed, and \emph{additively} $m$-stable, which is a result naturally complementing the well-known fact in the 
literature that the property of time-consistency for coherent risk measures (defined by (\ref{tc})) 
may be characterised in terms of convex, closed, {\em multiplicatively} $m$-stable sets (see Delbaen (2006)).
}
\end{remark}

\noindent{\bf Contents.} The remainder of the paper is organised as follows.
We present in Section~\ref{sec4} the definition of $g$-deviation measures, its properties and a number of examples. 
With these results in hand, we turn in Section~\ref{sec5} to the characterisation of dynamic deviation measures under a domination condition (Theorem~\ref{Main}). 
and proceed to establish in Section~\ref{subsec42}  
an integral representation for general dynamic deviation measures, removing the aforementioned domination condition, (Theorem~\ref{Nondom}) 
and a dual robust representation result. (Theorems~\ref{mstable} and \ref{theoremdual1}). 
In Section~\ref{secap} we phrase a dynamic mean-deviation portfolio-optimisation problem and present an equilibrium solution. It is of interest to investigate other (financial) optimisation problems in terms of dynamic deviation measures, such as optimal hedging problems, capital allocation problems and optimal stopping problems; in the interest of brevity, we leave these as topics for future research.
\smallskip

\section{$g$-deviation measures}
\label{sec4}
In the sequel we assume that the probability space $(\Omega,\F,\mathbb P)$ is equipped with
(i) a standard $d$-dimensional Brownian motion
$W=(W^1,\ldots,W^d)^\intercal$ and (ii) a Poisson
random measure $N(\td t\times \td x)$ on
$[0,T]\times\mathbb{R}^k\setminus\{0\},$ independent of $W$,
with intensity measure $\hat{N}(\td t\times \td x)=\nu(\td x)\td t$, 
where the L\'{e}vy  measure $\nu(\td x)$ satisfies the integrability condition
$$ \int_{\mathbb{R}^k\setminus\{0\}} (|x|^2\wedge 1) \nu(\td x)<\infty,$$
and let $\tilde{N}(\td t \times \td x):=N(\td t \times \td x)-\hat{N}(\td t \times \td x)$ denote the compensated Poisson random measure.
Further, let $\mathcal{U}$ denote the Borel sigma-algebra induced by the $L^2(\nu(\td x))$-norm, $(\F_{t})_{t\in[0,T]}$ the right-continuous completion of the filtration
generated by $W$ and $N$, and $\mathcal{P}$ and $\mathcal O$ the
predictable and optional sigma-algebras on $[0,T]\times\Omega$ with respect to
$(\F_{t})$.  We denote by $ L_d^2(\mathcal{P},\td\mathbb P\times \td t)$ 
the space of all predictable $d$-dimensional processes that are square-integrable with respect to the measure 
$\td\mathbb P\times
\td t$ and we let $\mathcal S^2 = \left\{Y\in\mathcal O: \E{\sup_{0\leq t\leq T}|Y_s|^2} < \infty\right\}$
denote the collection of square-integrable c\`{a}dl\`{a}g optional processes. 
Further, let $\mathcal{B}(\R^k\setminus\{0\})$ be the Borel sigma-algebra on $\R^k\setminus\{0\}.$
For any $X\in L^2(\F_T)$ we denote by $(H^X,\tilde H^X)$ the unique pair of predictable processes with 
$H^X\in L_d^2(\mathcal{P},\td\mathbb P\times \td t)$ and $\tilde{H}^X
\in L^2(\mathcal{P}\times \mathcal{B}(\R^k\setminus\{0\} ),\td\mathbb P\times \td t \times \nu(\td x))$, subsequently 
referred to as the \emph{representing pair} of $X$, satisfying\footnote{See {\em e.g.}\, Theorem III.4.34 in Jacod and Shiryaev (2013)} 
\begin{equation}\label{mrep}
X=\E{X}+\int_0^T H^X_s
\td W_s+\int_0^T\int_{\R^k\setminus\{0\}}\tilde{H}_{s}^X(x)\tilde{N}(\td t \times \td x) ,
\end{equation}
where $\int_0^T H^X_s \td W_s:=\sum_{i=1}^d \int_0^T H^{X,i}_s \td W^i_s$. 
\smallskip

\noindent We consider the following class of driver functions:
\begin{definition}
We call a $\mathcal{P}\otimes
\mathcal{B}(\mathbb{R}^d)\otimes \mathcal{U}$-measurable function $$
\begin{array}{rlclclclll}
g:&[0,T] &\times &\Omega &\times &\mathbb{R}^{d} &\times & L^{2}(\nu(\td x)) &\rightarrow &\R_+\\
&(t, & &\omega , & &h, & & \tilde{h}) &\longmapsto &g(t,\omega ,h,\tilde{h})%
\end{array}
$$
a {\em driver function} if for $\td\mathbb P\times \td t$ a.e. $(\omega,t)\in\Omega\times[0,T]$:
\smallskip
\smallskip

\noindent{\bf (i)} {\em (Positivity)} For any $(h,\tilde h)\in\mathbb R^d\times L^2(\nu(\td x))$ $g(t,h,\tilde{h})\geq 0$ with equality if and only if $(h,\tilde{h})=0$.
\smallskip
\smallskip

\noindent{\bf (ii)} {\em (Lower semi-continuity)} If $h^n\to h$, $\tilde{h}^n\to \tilde h $ $L^2(\nu(\td x))$-a.e. then
$g(t,h,\tilde{h}) \leq \liminf_n g(t,h^n,\tilde{h}^n).$
\end{definition}
\begin{definition}
We call a driver function $g$ {\em convex} if $g(t,h,\tilde{h})$ is convex in $(h,\tilde{h})$, $\td\mathbb P \times \td t$ a.e.; 
{\em positively homogeneous} if $g(t,h,\tilde{h})$ is positively homogeneous in $(h,\tilde{h})$, i.e., for $\lambda> 0$, $g(t,\lambda h,\lambda \tilde{h})=\lambda g(t,h,\tilde{h}),$ $\td\mathbb P\times \td t$ a.e. 
and of {\em linear growth} if for some $K>0$ 
we have $\td\mathbb P\times \td t$ a.e.
\begin{equation}\label{eq:ling}
|g(t,h,\tilde h)|^2 \leq 1 + K^2|h|^2 + K^2\int_{\mathbb R^k\backslash\{0\}} 
\tilde h(x)^2\nu(\td x).
\end{equation}
\end{definition}
To such a driver function $g$ one may associate a corresponding 
 dynamic deviation measure given in terms of the solution to a certain BSDE. 
  \begin{definition}
Let $g$ be a convex and positively homogeneous driver function of linear growth. 
The {\em $g$-deviation measure} 
$D^g = (D^g_t)_{t\in[0,T]}$ is equal to the collection
$D_t: L^2(\mathcal F_T)\to L_+^2(\mathcal F_t)$, $t\in[0,T]$, given by
$$D^g_t(X) = Y_t, \quad\quad X\in L^2(\mathcal F_T),$$ 
where $(Y,Z,\tilde Z)\in \mathcal S^2\times L_d^2(\mathcal{P},\td\mathbb P\times \td t) \times 
L^2(\mathcal{P}\times \mathcal{B}(\R^k\setminus\{0\} ),\td\mathbb P\times \td t \times \nu(\td x))$ 
is the unique solution of the BSDE given in terms of the representing pair $(H^X,\tilde H^X)$ of $X$ by 
\begin{eqnarray}
\label{bsde}
\td Y_t &=& - g(t,H^X_t,\tilde{H}^X_{t})\td t+Z_t \td W_t+\int_{\mathbb R^k\backslash\{0\}} 
\tilde{Z}_t(x)\tilde{N}(\td t \times \td x),\quad t\in[0,T),\\
Y_T &=& 0,
\label{bsde2}
\end{eqnarray} 
\end{definition}
Any $g$-deviation measure admits an integral representation in terms of $g$.  
\begin{Prop}\label{prop:charac} Let $g$ be a convex and positively homogeneous driver function of linear growth.

\noindent{\bf (i)} For given $X\in L^2(\mathcal F_T)$, we have
\begin{equation} \label{charac}
D^g_t(X) =\E{\int_t^Tg(s,H_s^X,\tilde{H}^X_s)\td s\bigg|\F_t},\qquad t\in[0,T].
\end{equation}

\noindent{\bf (ii)} $D^g$ is a  dynamic deviation measure. In particular, $D^g$ satisfies (D6).
\end{Prop}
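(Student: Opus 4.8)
The plan is to base both parts on the explicit structure of the BSDE \eqref{bsde}--\eqref{bsde2}, whose driver $f_s:=g(s,H^X_s,\tilde H^X_s)$ does \emph{not} depend on the solution triple $(Y,Z,\tilde Z)$. First I would record that $f$ is square-integrable on $[0,T]\times\Omega$: the linear-growth bound \eqref{eq:ling} together with the Itô isometry applied to \eqref{mrep} gives $\E{\int_0^T f_s^2\,\td s}\le T+K^2\,\E{(X-\E{X})^2}<\infty$ since $X\in L^2(\F_T)$. Hence the BSDE has a unique solution, obtained by setting $Y_t=\E{\int_t^T f_s\,\td s\,\big|\,\F_t}$ and reading off $(Z,\tilde Z)$ from the martingale representation of the square-integrable martingale $M_t=\E{\int_0^T f_s\,\td s\,\big|\,\F_t}$ relative to $W$ and $\tilde N$. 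Part (i) is then immediate: integrating \eqref{bsde} over $[t,T]$, using $Y_T=0$ and taking $\E{\cdot|\F_t}$ annihilates the two stochastic integrals (genuine martingales by the integrability of $Z,\tilde Z$), leaving \eqref{charac}.

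For part (ii) the workhorse is the behaviour of the representing pair under the relevant operations. I would first establish the elementary facts that, for $A\in\F_t$, $m\in L^\infty(\F_t)$, $\lambda\in L^\infty_+(\F_t)$ and $X,Y\in L^2(\F_T)$, the representing pairs satisfy, $\td\mathbb P\times\td t$-a.e. on $(t,T]$, $(H^{X+m},\tilde H^{X+m})=(H^X,\tilde H^X)$, $(H^{\lambda X},\tilde H^{\lambda X})=\lambda(H^X,\tilde H^X)$ and $(H^{X+Y},\tilde H^{X+Y})=(H^X+H^Y,\tilde H^X+\tilde H^Y)$, while the pair of $\E{X|\F_t}$ equals $(H^X,\tilde H^X)$ on $[0,t]$ and vanishes on $(t,T]$; each follows by applying \eqref{mrep} to the martingale $\E{\cdot|\F_u}$ and using $\F_t$-measurability. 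Feeding these into \eqref{charac} yields the axioms almost mechanically: normalisation and the equality case of (D4) use the positivity axiom of $g$ (that $g(s,\cdot,\cdot)$ vanishes exactly at the origin) together with the fact that $X$ is $\F_t$-measurable iff its representing pair vanishes on $(t,T]$; (D1) holds because the integral in \eqref{charac} only sees $(H^X,\tilde H^X)$ on $(t,T]$; (D2) follows from positive homogeneity of $g$ and pulling the $\F_t$-measurable $\lambda$ out of the conditional expectation (using also $g(s,0,0)=0$ to cover $\{\lambda=0\}$); (D3) follows because a convex positively homogeneous $g$ is subadditive, combined with monotonicity of $\E{\cdot|\F_t}$; and positivity in (D4) is immediate from $g\ge0$.

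I would obtain the time-consistency property (D6) also from \eqref{charac}. Splitting $\int_s^T=\int_s^t+\int_t^T$, the tower property gives $\E{D^g_t(X)|\F_s}=\E{\int_t^T f_u\,\td u\,|\,\F_s}$, while the stopped-pair description of $\E{X|\F_t}$ together with $g(u,0,0)=0$ for $u\in(t,T]$ identifies $D^g_s(\E{X|\F_t})=\E{\int_s^t f_u\,\td u\,|\,\F_s}$. Adding the two reproduces $D^g_s(X)=\E{\int_s^T f_u\,\td u\,|\,\F_s}$, which is exactly \eqref{eq:gendev}.

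The delicate step is the lower semi-continuity (D5). Given $X^n\to X$ in $L^2(\F_T)$, the orthogonal decomposition \eqref{mrep} (Itô isometry) shows that the representing pairs converge, $H^{X^n}\to H^X$ in $L^2_d(\mathcal P,\td\mathbb P\times\td t)$ and $\tilde H^{X^n}\to\tilde H^X$ in $L^2(\mathcal P\times\mathcal B,\td\mathbb P\times\td t\times\nu)$. Passing to a subsequence along which these converge a.e., the lower semi-continuity of $g$ yields $g(s,H^X_s,\tilde H^X_s)\le\liminf_n g(s,H^{X^n}_s,\tilde H^{X^n}_s)$ a.e., and an application of the conditional Fatou lemma to \eqref{charac} then gives $D^g_t(X)\le\liminf_n D^g_t(X^n)$. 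The main obstacle I anticipate is precisely this last passage: strong $L^2$-convergence only delivers a.e. convergence of the representing pairs along a subsequence, so care is needed in combining the pointwise lower semi-continuity of $g$ with Fatou's lemma to recover the liminf inequality, and it is here that the nonnegativity and convexity of $g$ together with monotonicity of the conditional expectation do the real work.
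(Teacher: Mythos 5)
Your proposal is correct and takes essentially the same route as the paper's proof: part (i) by identifying $Y_t=\E{\int_t^T g(s,H^X_s,\tilde H^X_s)\,\td s\,\Big|\,\F_t}$ as the BSDE solution via the martingale representation of $M_T$, and part (ii) by reading (D1)--(D6) off the representation \eqref{charac}, with (D6) obtained from splitting the integral and the tower property, and (D5) from Fatou's lemma combined with lower semi-continuity of $g$. If anything, you are more explicit than the paper on two minor points --- the square-integrability of the driver $s\mapsto g(s,H^X_s,\tilde H^X_s)$, and the subsequence refinement needed to pass from $L^2$-convergence of the representing pairs to a.e.\ convergence in the (D5) step --- both of which the paper glosses over.
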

\begin{proof}{}\ \noindent{\bf (i)}
Letting $Y_t$ be equal to the right-hand side of \eqref{charac} we note that $Y_T = 0$, while we have
\begin{align*}
Y_t = M_t -\int_0^t g(s,H_s^X,\tilde{H}^X_s)\td s, \q M_t = \E{\int_0^Tg(s,H_s^X,\tilde{H}^X_s)\td s\bigg|\F_t}.
\end{align*}
Letting $(Z, \tilde{Z}) = (Z^{M_T}, \tilde{Z}^{M_T})$ the representing pair of $M_T$ 
we have that  $Y_t$ satisfies (\ref{bsde}).

\noindent{\bf (ii)} To verify that (D6) holds we note that the representation \eqref{charac} implies that, for any 
$s,t\in[0,T]$ with $s\leq t$, 
	$$D^g_s(\E{X|\F_t}) = \E{\left.\int_s^t g(u, H^X_u, \tilde{H}^X_u)\td u\right|\F_s},$$ 
	which yields that $D^g_s(\E{X|\F_t}) + \E{D^g_t(X)|\F_s}$ is equal to 
	\begin{eqnarray*}
\E{\left.\int_s^t g(u, H^X_u,
		\tilde{H}^X_u)\td u\right|\F_s} + \E{\left.\E{\left.\int_t^T g(u, H^X_u, \tilde{H}^X_u)\td u\right|\F_t}\right|\F_s}
= \E{\left.\int_s^T g(u, H^X_u, \tilde{H}^X_u)\td u\right|\F_s},
	\end{eqnarray*}
which is equal to $D^g_{s}(X)$.
We show next that the axioms (D1)--(D5) are satisfied. We note from \eqref{charac} that $D_t^g(X+m)=D_t^g(X)$
for any $X\in L^2(\F_T)$, $m\in L^\infty_+(\F_t)$ while $D_t^g(m)=0$ as $g(t,0,0) = 0$, so that
(D1) holds. Using \eqref{charac} we see that $D^g$ inherits
the properties of convexity and positive homogeneity from $g$, so that (D2) and (D3) are satisfied.
Positivity (D4) is straightforward to verify by using that $g$ is nonnegative and strictly positive for $(h,\tilde{h})\ne 0$. 
Finally, noting that (a) if $X^n\to X$ in $L^2(\F_T)$, $(H^{X^n},\tilde{H}^{X^n})$ converges to $(H^{X},\tilde{H}^{X})$ in $L_d^2(\td\mathbb P\times \td t)\times L^2(\td\mathbb P\times \td t \times \nu(\td x))$-norm and that (b) $g$ is nonnegative and lower semi-continuous, we have 
by an application of Fatou's Lemma 
\begin{align*}
\liminf_nD^g_t(X^n)& =   \liminf_n \E{\left.\int_t^T g(s,H^{X^n}_s,\tilde{H}^{X^n}_s)\td s\right|\F_t} \geq
\E{\left.\int_t^T\liminf_n  g(s,H^{X^n}_s,\tilde{H}^{X^n}_s)\td s\right|\F_t}\\ &\geq 
\E{\left.\int_t^T g(s,H^{X}_s,\tilde{H}^{X}_s)\td s\right|\F_t} =D^g_t(X),
\end{align*}
which shows that also the lower-semicontinuity condition in
(D5) is satisfied.
\end{proof}

\noindent The linear growth condition and convexity guarantee that a $g$-deviation measure is continuous in $L^2$.
\begin{lemma}
\label{lemmacont}
Let $g$ be a convex driver function of linear growth.
If $X^n$ converge to $X$ in $L^2(\F_T)$ then $\lim_n {D}^g_0(X^n)={D}^g_0(X).$
\end{lemma}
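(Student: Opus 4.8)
The plan is to use the integral representation~\eqref{charac} to reduce the claim to a continuity property of a convex functional on $L^2(\F_T)$. First I would observe that the proof of Proposition~\ref{prop:charac}(i) nowhere invokes positive homogeneity, so \eqref{charac} remains valid for the present, merely convex, driver $g$ of linear growth; hence $D^g_0(X)=\E{\int_0^T g(s,H^X_s,\tilde H^X_s)\,\td s}$. The crucial structural input is that the representing map $X\mapsto(H^X,\tilde H^X)$ is \emph{linear} and is an $L^2$-isometry, so that $X^n\to X$ in $L^2(\F_T)$ forces $(H^{X^n},\tilde H^{X^n})\to(H^X,\tilde H^X)$ in $L^2_d(\td\mathbb P\times\td t)\times L^2(\td\mathbb P\times\td t\times\nu)$.

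With this in hand I would establish three properties of the functional $D^g_0:L^2(\F_T)\to\R_+$. \emph{Finiteness} follows from the growth bound \eqref{eq:ling}: by Cauchy--Schwarz on the finite measure space $\Omega\times[0,T]$ one has $D^g_0(X)\le (T\,\E{\int_0^T g^2\,\td s})^{1/2}$, and \eqref{eq:ling} bounds $\E{\int_0^T g^2\,\td s}$ by $T+K^2(\|H^X\|_2^2+\|\tilde H^X\|_2^2)<\infty$. \emph{Convexity} is inherited from $g$: since the representing map is linear, convexity of $g$ in $(h,\tilde h)$ gives, after integrating and taking expectations, that $D^g_0$ is convex. \emph{Lower semicontinuity} is obtained exactly as in the proof of Proposition~\ref{prop:charac}(ii): along a subsequence realising the $\liminf$ one passes to a further subsequence for which $(H^{X^n},\tilde H^{X^n})$ converges $\td\mathbb P\times\td t$-a.e., whereupon the lower semicontinuity of $g$ together with Fatou's lemma yields $D^g_0(X)\le\liminf_n D^g_0(X^n)$.

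Finally I would conclude continuity from convexity, lower semicontinuity and finiteness, invoking the classical convex-analysis fact (Proposition~2 in Rockafellar {\em et al.}~(2006a), already used in Remark~\ref{rem1}(iii)) that a finite, convex, lower semicontinuous functional on $L^2$ is continuous; this gives $\lim_n D^g_0(X^n)=D^g_0(X)$. I expect the delicate point to be precisely this last step, because in the infinite-dimensional factor $L^2(\nu)$ a convex function that is merely finite need not be continuous, so the lower-semicontinuity hypothesis is genuinely needed and cannot be dropped. An alternative, more self-contained route would bypass the abstract theorem: convexity together with lower semicontinuity makes $g(t,\cdot,\cdot)$ continuous on $\R^d\times L^2(\nu)$, so the integrands converge a.e.\ along a subsequence, and one then applies a generalised dominated convergence argument (Pratt's lemma) with the linear-growth envelope $1+K^2|H^{X^n}|^2+K^2\int\tilde H^{X^n}(x)^2\nu(\td x)$ as dominating sequence, whose integral converges by Scheff\'e's lemma; the usual subsequence principle then upgrades a.e.-subsequential convergence to convergence of the whole sequence.
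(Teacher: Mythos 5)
Your proposal is correct, and your primary argument takes a genuinely different route from the paper's. The paper works at the level of the integrand: after noting, as you do, that the representing pairs converge in the product $L^2$-norms, it observes that the sequence $|g(s,H^{X^n}_s,\tilde H^{X^n}_s)|$ is uniformly integrable on $\Omega\times[0,T]$ (since \eqref{eq:ling} bounds $g^2$ by a sequence converging in $L^1(\td\mathbb P\times \td t)$), invokes continuity of $g$ in $(h,\tilde h)$ (convex and locally bounded, citing Theorem 2.2.9 in Zalinescu (2002)), and concludes by a Vitali-type passage to the limit; so your ``alternative, more self-contained route'' sketched at the end (continuity of $g$ plus Pratt's lemma with the linear-growth envelope) is essentially the paper's own proof, with Pratt in place of Vitali. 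Your main route instead lifts the convex analysis one level up: you verify that the functional $D^g_0$ is finite on $L^2(\F_T)$ (via \eqref{eq:ling} and Cauchy--Schwarz), convex (via linearity of $X\mapsto (H^X,\tilde H^X)$ and convexity of $g$), and norm-lower-semicontinuous (Fatou along subsequences), and then apply to $D^g_0$ the same abstract fact the paper uses for $D_0$ in Remark~\ref{rem1}(iii), namely that a finite, convex, lower semicontinuous functional on $L^2$ is continuous. All steps check out: \eqref{charac} indeed requires no positive homogeneity (neither your proof nor the paper's could proceed otherwise, as the lemma assumes only convexity), convexity of $g$ does pass through the linear representation map, and your explicit subsequence extraction in the Fatou step is in fact more careful than the corresponding step in the proof of Proposition~\ref{prop:charac}(ii), where the passage from $L^2$-convergence of the representing pairs to pointwise lower semicontinuity is left implicit. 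As for what each approach buys: the paper's argument is quantitative and stays at the integrand level, but must first establish continuity of $g$ on $\mathbb R^d\times L^2(\nu(\td x))$, itself a convex-analysis/Baire-category fact; your argument trades the uniform-integrability bookkeeping for a single application of that same category-type theorem directly to $D^g_0$, which makes transparent exactly where convexity, lower semicontinuity and linear growth enter, and reuses machinery the paper has already set up.
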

\begin{proof}{}
If $X^n$ converge to $X$ in $L^2(\F_T)$ then, as noted before, $H^{X^n}$ and $\tilde{H}^{X^n}$
converge to $H^X$ and $\tilde{H}^X$ in $L^2_d(\td\mathbb P\times \td t)$ and $L^2(\td\mathbb P\times
\td t\times \nu(\td x))$ norms.
Next note that $|g(s, H^{X^n}_s, \tilde{H}^{X^n}_s)|$
is a uniformly integrable sequence by the growth-condition \eqref{eq:ling} 
and the convergence of the processes $|H^{X^n}|^2$ and $\IR|\tilde{H}^{X^n}|^2(x)
\nu(\td x)$ in $L^1(\td\mathbb P\times \td t)$-norm. As $g$ is continuous (as it is convex and locally bounded, {\em cf.} Theorem 2.2.9 in 
Zalinescu (2002))  it follows thus that
$\lim_n {D}_0^g(X^n)=
\lim_n \E{\int_0^T g(s, H^{X^n}_s, \tilde{H}^{X^n}_s)\td s}=\E{\int_0^T g(s,
H^{X}_s, \tilde{H}^{X}_s)\td s}=D^g_0(X)$.
\end{proof}

\noindent We list a number of properties of a $g$-deviation measure that are characterised 
in terms of those of the driver function $g$.

\begin{Prop}
\label{theoremequivalent}
Let $g$ and $\tilde g$ be driver functions of linear growth.

\smallskip\smallskip

\noindent{\bf (i)} $D^g$ is conditionally convex if and only if $g$ is convex.
\smallskip\smallskip

\noindent{\bf (ii)} $D^g$ satisfies (D2) if and only if $g$ is positively
homogeneous.
\smallskip\smallskip

\noindent{\bf (iii)} $D^g$ is symmetric, that is, $D^g_t(X) = D^g_t(-X)$ for all $t$, if and only if
$g$ is symmetric in $(h,\tilde{h}).$
\smallskip\smallskip

\noindent{\bf (iv)} $D^g \ge D^{\tilde g}$ if and only if $g\ge \tilde g$ $\td\mathbb P\times \td t$ a.e. 
\end{Prop}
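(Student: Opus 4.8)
The engine of the whole proposition is the integral representation \eqref{charac} of Proposition~\ref{prop:charac}. The first thing I would record is that this representation is in fact available for \emph{every} driver function $g$ of linear growth, whether or not $g$ is convex or positively homogeneous: since the driver $-g(s,H^X_s,\tilde H^X_s)$ in \eqref{bsde} does not depend on the unknowns $(Y,Z,\tilde Z)$, the BSDE \eqref{bsde}--\eqref{bsde2} is solved by plain integration, and taking $\F_t$-conditional expectations (the martingale parts dropping out) yields $D^g_t(X)=\E{\int_t^Tg(s,H^X_s,\tilde H^X_s)\td s\,\big|\,\F_t}$ exactly as in the proof of Proposition~\ref{prop:charac}(i), with the linear-growth bound \eqref{eq:ling} ensuring the required square-integrability. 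Two further structural facts I would isolate are (a) the map $X\mapsto(H^X,\tilde H^X)$ is \emph{linear}, and (b) it is \emph{surjective} onto $L^2_d(\mathcal P,\td\mathbb P\times\td t)\times L^2(\mathcal P\times\B(\Rs),\td\mathbb P\times\td t\times\nu(\td x))$, since any such predictable pair $(H,\tilde H)$ is the representing pair of $X:=\int_0^TH_s\td W_s+\int_0^T\IR\tilde H_s(x)\tNs$ by uniqueness in \eqref{mrep}. Fact (b) is what lets me prescribe representing pairs at will.

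The reverse (``if'') implications then follow by substitution. For a fixed constant $\lambda\in[0,1]$, or, for the conditional statement in (i), an $\F_t$-measurable $\lambda$ with $0\le\lambda\le1$, linearity (a) gives that the representing pair of $\lambda X+(1-\lambda)Y$ on $[t,T]$ equals $\lambda(H^X,\tilde H^X)+(1-\lambda)(H^Y,\tilde H^Y)$ — for $\F_t$-measurable $\lambda$ this uses that $\lambda$ factors out of the stochastic integrals over $[t,T]$ — so pointwise convexity of $g$ inserted into \eqref{charac}, after pulling the $\F_t$-measurable $\lambda$ out of $\E{\cdot|\F_t}$, yields conditional convexity of $D^g$, proving the ``if'' part of (i). Identically, positive homogeneity of $g$ gives $D^g_t(\lambda X)=\lambda D^g_t(X)$ for $\lambda\in L^\infty_+(\F_t)$ for (ii); symmetry of $g$ together with the representing pair of $-X$ being $-(H^X,\tilde H^X)$ gives (iii); and $g\ge\tilde g$ a.e. gives $D^g\ge D^{\tilde g}$ by monotonicity of the integral and of conditional expectation, proving (iv).

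For the converse implications I would argue by contraposition using (b) and a measurable-selection argument, which I expect to be the technical heart. Consider (iv): if $g\ge\tilde g$ fails on a set of positive $\td\mathbb P\times\td t$-measure, then, both $g$ and $\tilde g$ being normal integrands (jointly measurable and lower semi-continuous in $(h,\tilde h)$), the set $\{(s,\omega):\exists(h,\tilde h),\ g(s,\omega,h,\tilde h)<\tilde g(s,\omega,h,\tilde h)\}$ is measurable and admits a predictable selection $(s,\omega)\mapsto(h_s,\tilde h_s)$ of a witnessing pair; truncating to $\{|h_s|\le n\}$, to an $L^2(\nu)$-ball, and to a subset $B$ of finite measure keeps the selection square-integrable, so by (b) there is $X$ with representing pair $(h_sI_B,\tilde h_sI_B)$, and \eqref{charac} at $t=0$ gives $D^g_0(X)=\E{\int_Bg(s,h_s,\tilde h_s)\td s}<\E{\int_B\tilde g(s,h_s,\tilde h_s)\td s}=D^{\tilde g}_0(X)$, contradicting $D^g\ge D^{\tilde g}$. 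The same template handles (iii) by selecting a pair where $g(s,\cdot,-h,-\tilde h)\ne g(s,\cdot,h,\tilde h)$, and, after fixing a rational scalar, (ii) by selecting where $g(s,\cdot,\lambda h,\lambda\tilde h)\ne\lambda g(s,\cdot,h,\tilde h)$ and (i) by selecting, for fixed rational $\lambda\in(0,1)$, where the $\lambda$-convexity inequality is violated.

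The remaining point — the one I would handle with care — is upgrading these per-scalar, a.e. statements to the genuinely pointwise conclusions that $g$ is convex, respectively positively homogeneous. Here I would intersect the countably many exceptional null sets over all rational scalars to obtain a single full-measure set of $(s,\omega)$ on which the relevant identity or inequality holds for \emph{all} rational $\lambda$ and all $(h,\tilde h)$, and then extend from rational to real $\lambda$ using lower semi-continuity: for fixed arguments the map $\lambda\mapsto g(s,\omega,\lambda h,\lambda\tilde h)$ (resp.\ $\lambda\mapsto g(s,\omega,\lambda u+(1-\lambda)v)$) is lower semi-continuous, so an inequality holding on a dense set of $\lambda$ against a continuous affine majorant persists for all $\lambda$; for the homogeneity \emph{equality} the reverse inequality is obtained by applying the forward one at the scaled argument with the reciprocal scalar. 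I expect the bookkeeping of the measurable selection — verifying that the witnessing set is measurable (via the projection theorem for normal integrands) and extracting a predictable version — to be the main obstacle, the rest being routine once the representation \eqref{charac} and the surjectivity (b) are in place.
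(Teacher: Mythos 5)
Your proposal is correct and follows essentially the same route as the paper: the `$\Leftarrow$' directions are read off from the integral representation \eqref{charac} (which, as you note, holds for any driver of linear growth since the BSDE driver does not involve $(Y,Z,\tilde Z)$), and the `$\Rightarrow$' directions are proved by contradiction/contraposition, constructing random variables whose representing pairs witness a pointwise failure of the corresponding property of $g$ on a nonzero predictable set. The only difference is one of explicitness: where you invoke measurable selection, truncation, and the rational-scalar/lower-semicontinuity reduction, the paper simply posits the witnessing predictable processes $B^i$, $\tilde B^i$, the set $C$ and the scalar $\lambda\in(0,1)$, leaving those steps implicit.
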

To simplify notation we denote, for $s,t\in[0,T]$ with $s\leq t$ 
and $(H,\tilde{H})\in L_d^2(\mathcal{P},\td\mathbb P\times \td t \times 
L^2(\mathcal{P}\times \mathcal{B}(\R^k\setminus\{0\} ),\td\mathbb P\times \td t \times \nu(\td x))$, 
 $(H\cdot W)_{s,t}:= \int_s^t H_u \td W_u$
and  $(\tilde{H}\cdot\tilde{N})_{s,t}:=
\int_{(s,t]\times\R^k\setminus\{0\}}\tilde{H}_{u}(x)\tilde{N}(\td u\times\td x),$
and moverover $(H\cdot W)_{t}:= (H\cdot W)_{0,t}$ 
and $(\tilde{H}\cdot\tilde{N})_{t}:=
(\tilde{H}\cdot\tilde{N})_{0,t}.$

\noindent{\it Proof of Proposition \ref{theoremequivalent}.} 
First, we prove (i)`$\Rightarrow$' by contradiction.
Suppose that there exist predictable processes $B^i$ and $\tilde{B}^i$ for
$i=1,2$, a nonzero predictable set $C$ and a $\lambda\in (0,1)$ such that 
for $(s,\omega)\in C$
$$g(s,\lambda B^1_s + (1-\lambda) B^2_s, \lambda \tilde{B}_s^1 + (1-\lambda) \tilde{B}_s^2) > \lambda g(s,B^1_s,\tilde{B}^1_s) + (1-\lambda)
g(s,B^2_s,\tilde{B}^2_s).$$ 
Set $H^i_s(\omega)=B^i_s(\omega)$, $i=1,2$, if $(s,\omega) \in C$ and zero otherwise,
 define $\tilde{H}^i$, $i=1,2$, similarly and set $X = (H^1
\cdot W)_T + (\tilde{H}^1 \cdot \tilde{N})_T$, $Y = (H^2 \cdot W)_T +
(\tilde{H}^2 \cdot \tilde{N})_T$ and $C_s=\{\omega\in\Omega: (s,\omega)\in C\}$. 
using that $g(s,0,0)=0$ it follows that $D^g_0(\lambda X + (1-\lambda) Y)$ 
is equal to
\begin{eqnarray}\nonumber
\lefteqn{\E{\int_0^Tg(s,\lambda I_{C_s} H_s^1 +
(1-\lambda) I_{C_s}H_s^2, \lambda I_{C_s} \tilde{H}_s^1 + (1-\lambda) I_{C_s}
\tilde{H}_s^2)\td s}}\\ \nonumber
&=& \E{\int_0^TI_{C_s} g(s,\lambda H_s^1 + (1-\lambda) H_s^2, \lambda
\tilde{H}_s^1 + (1-\lambda) \tilde{H}_s^2)\td s}\\ \nonumber
&>& \lambda\E{\int_0^T I_{C_s} g(s,H^1_s,\tilde{H}^1_s)\td s} +
(1-\lambda)\E{\int_0^T I_{C_s} g(s,H^2_s,\tilde{H}^2_s)\td s} 
\\ \label{lEil}
&=&\lambda\E{\int_0^T g(s,I_{C_s}H^1_s,I_{C_s} \tilde{H}^1_s)\td s} +
(1-\lambda)\E{\int_0^T g(s,I_{C_s}H^2_s,I_{C_s}\tilde{H}^2_s)\td s}.
\end{eqnarray}
The right-hand side of \eqref{lEil} is equal to $\lambda D^g_0(X) + (1-\lambda)D^g_0(Y)$, 
in contradiction to the convexity of $D^g_0$.
The directions `$\Rightarrow$' in (ii), (iii) and (iv) follow by similar lines of reasoning. 
The implications `$\Leftarrow$'  in (i)--(iv) follow from~\eqref{charac} in Proposition~\ref{prop:charac}.
\qed
\bigskip

\noindent{\bf Examples.} We give next a number of examples of $g$-deviation measures.

\begin{Example}\label{Dl} The family of $g$-deviation measures  
with driver functions given by
	\begin{equation}\label{gl}
g_{c,d}(t,h,\tilde h) = c\, |h| + d\, \sqrt{\IR |\tilde{h}(x)|^2
		\nu(\td x)}, \q c,d\in\mathbb R_+\backslash\{0\}, 
		\end{equation}
corresponds to a measurement of the risk of a random variable $X\in L^2(\F_T)$ by the integrated multiples 
of the local volatilities of the continuous and discontinuous martingale parts in 
its martingale representation~\eqref{mrep}. 
\end{Example}
\begin{Example}\label{D2}
In the case of a $g$-deviation measure with driver function given by 
$$
g(\omega, t,h,\tilde{h}) = {\it CVaR}^{\nu}_{t,a}(\tilde h), \qquad a\in(0,\nu(\mathbb R^k\backslash\{0\})),
$$
the risk is measured in terms of the values of the (large) jump sizes under $CVaR^\nu_{t,a}$.
Here $CVaR^{\nu}_{t,a}(\tilde h) = \frac{1}{a}\int_0^a 
VaR^{\nu}_{t,b}(\tilde h)\td b
$
is given in terms of the left-quantiles $VaR^{\nu}_{t,a}(\tilde h)$, $a\in(0,\nu(\mathbb R^k\backslash\{0\}))$ 
of $h(J)$ under the measure $\nu(\td x)$, that is,
$$
VaR^{\nu}_{t,a}(\tilde h) := VaR^{\nu}_{a}(h(J)) := \sup\{y\in\mathbb R: 
\nu(\{x\in \mathbb R^k\backslash\{0\}: \tilde h(x) < -y\}) < a \}.
$$
\end{Example}
In the next example we deploy the following auxiliary result:
\begin{proposition}\label{prop:disc}\label{Lemmaadditive}\label{prop}  
Let $I:=\{t_0, t_1, \ldots, t_n\}\subset[0,T]$ be strictly ordered. 
$D = (D_t)_{t\in I}$ satisfies (D1)--(D4) and (D6) 
if and only if for some collection $\tilde D = (\tilde D_t)_{t\in I}$ of conditional deviation measures 
we have 
\begin{equation}\label{DDH}
D_t(X) = \E{\left. \sum_{t_i\in I: t_i\ge t} 
\tilde D_{t_i}\left(\E{X|\mathcal F_{t_{i+1}}} - \E{X|\mathcal F_{t_{i}}}\right)\right|\mathcal F_t}, \q t\in I,\ 
X\in L^2(\mathcal F_T).
\end{equation}
In particular, a dynamic deviation measure $D$ satisfies \eqref{DDH} with $\tilde D_{t_i} = D_{t_i}$, $t_i\in I$.
\end{proposition}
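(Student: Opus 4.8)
The plan is to prove the equivalence by treating the two directions separately, writing $t=t_m$ for the relevant index and abbreviating the conditional increments by $\Delta_i X:=\E{X|\F_{t_{i+1}}}-\E{X|\F_{t_i}}$, with the convention $\F_{t_{n+1}}=\F_T$ so that $\Delta_n X=X-\E{X|\F_{t_n}}$ and the family $(\Delta_i X)_i$ telescopes to $X-\E{X|\F_{t_0}}$. The two facts I would use repeatedly are that translation invariance (D1) extends to shifts in $L^2(\F_{t_i})$ (as recorded in the remark following the definition, since (D2)--(D3) hold) and that $\tilde D_{t_i}(0)=0$ by normalisation.

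\textbf{Forward direction (``only if'').} Assuming $D$ satisfies (D1)--(D4) and (D6), I claim \eqref{DDH} holds with $\tilde D_{t_i}=D_{t_i}$. For $i<n$, applying (D6) with $s=t_i$ and $t=t_{i+1}$ gives $D_{t_i}(X)=D_{t_i}(\E{X|\F_{t_{i+1}}})+\E{D_{t_{i+1}}(X)|\F_{t_i}}$, and since $\E{X|\F_{t_i}}\in L^2(\F_{t_i})$, translation invariance turns the first summand into $D_{t_i}(\Delta_i X)$. For $i=n$ translation invariance alone gives $D_{t_n}(X)=D_{t_n}(X-\E{X|\F_{t_n}})=D_{t_n}(\Delta_n X)$. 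Substituting the recursion for decreasing $i$ and collapsing the nested conditional expectations by the tower property yields $D_{t_m}(X)=\sum_{i\ge m}\E{D_{t_i}(\Delta_i X)|\F_{t_m}}=\E{\sum_{t_i\ge t}D_{t_i}(\Delta_i X)|\F_t}$, which is \eqref{DDH}.

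\textbf{Backward direction (``if'').} Assuming \eqref{DDH} holds for some conditional deviation measures $\tilde D_{t_i}$, note first that $D_t$ is then well defined as a map into $L^2_+(\F_t)$, because each $\tilde D_{t_i}(\Delta_i X)\in L^2_+(\F_{t_i})$ and conditional expectation is an $L^2$-contraction preserving nonnegativity. The key point for (D1)--(D3) is that the increments behave linearly: for $t_i\ge t$, one has $\Delta_i(X+m)=\Delta_i X$ for $m\in L^\infty(\F_t)$ (the $m$-part cancels since $m$ is $\F_{t_i}$-measurable), $\Delta_i(\lambda X)=\lambda\Delta_i X$ for $\lambda\in L^\infty_+(\F_t)\subseteq L^\infty_+(\F_{t_i})$, and $\Delta_i(X+Y)=\Delta_i X+\Delta_i Y$; combining this with the corresponding axioms of $\tilde D_{t_i}$ and with linearity/monotonicity of conditional expectation gives (D1), (D2) and (D3). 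Positivity of $D_t$ is immediate, and for the equality case of (D4) I would argue that $D_t(X)=0$ forces the nonnegative sum $\sum_{t_i\ge t}\tilde D_{t_i}(\Delta_i X)$ to vanish almost surely, hence $\tilde D_{t_i}(\Delta_i X)=0$ for each $i$, so by (D4) for $\tilde D_{t_i}$ each $\Delta_i X$ is $\F_{t_i}$-measurable; but $\E{X|\F_{t_{i+1}}}$ being $\F_{t_i}$-measurable forces $\E{X|\F_{t_{i+1}}}=\E{X|\F_{t_i}}$, i.e.\ $\Delta_i X=0$ for all $t_i\ge t$, which telescopes (using $\F_{t_{n+1}}=\F_T$) to $X=\E{X|\F_t}$. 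Finally, for (D6) I would compute both sides from \eqref{DDH}: splitting the sum defining $D_s(X)$ at $t$ and using the tower property identifies $\E{D_t(X)|\F_s}$ with the tail over $t_i\ge t$, while the identity $\E{\E{X|\F_t}|\F_{t_j}}=\E{X|\F_{t_j\wedge t}}$ shows that the increments of $\E{X|\F_t}$ equal those of $X$ for $t_i<t$ and vanish for $t_i\ge t$, so that $D_s(\E{X|\F_t})$ equals exactly the head over $s\le t_i<t$; adding the two reproduces $D_s(X)$.

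The main obstacle is the equality case of positivity in the backward direction: passing from $D_t(X)=0$ to $\F_t$-measurability of $X$ requires first extracting the almost-sure vanishing of each individual term $\tilde D_{t_i}(\Delta_i X)$ from the vanishing of their $\F_t$-conditional expectation, and then the observation that $\F_{t_i}$-measurability of $\E{X|\F_{t_{i+1}}}$ collapses the increment to zero. The remaining care is purely bookkeeping, namely the terminal convention $\F_{t_{n+1}}=\F_T$ in the telescoping and the splitting of the sum at $t$ when verifying (D6).
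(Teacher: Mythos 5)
Your proof is correct and takes essentially the same route as the paper's: the forward direction iterates (D6) at consecutive grid points together with ($L^2$-extended) translation invariance and inducts, and the backward direction splits the sum in \eqref{DDH} at the intermediate time, identifying the tail with $\E{D_t(X)|\F_s}$ via the tower property and the head with $D_s(\E{X|\F_t})$ via the fact that the conditional increments of $\E{X|\F_t}$ agree with those of $X$ below $t$ and vanish above. The only difference is one of detail, not of method: you spell out the verification of (D1)--(D4) in the backward direction (including the equality case of (D4), which requires extracting the vanishing of each individual term), whereas the paper dismisses these as clear.
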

\begin{proof}{}%
\ `$\Leftarrow$': We will only show that $D_t$ satisfies (D6), as 
it is clear that (D1)--(D4) are satisfied. Let $X\in L^2(\mathcal F_T)$ and note that 
as $\tilde D_t$, $t\in I$, satisfy (D1) and (D4) we have for any $s,t\in I$ with $s>t$ 
that $D_t(\E{X|\mathcal F_s}) = \sum_{t_i\in I:t\leq t_i< s}\E{\tilde D_{t_i}(\E{X|\mathcal F_{t_{i+1}}})|\F_t}$.
Thus, we have that $D_t(X)$ is equal to
$$
\sum_{t_i\in I:t\leq t_i< s}\E{\left.\tilde D_{t_i}(\E{X|\mathcal F_{t_{i+1}}})\right|\F_t} 
+ \sum_{t_i\in I:s\leq t_i}\E{\left.\tilde D_{t_i}(\E{X|\mathcal F_{t_{i+1}}})\right|\F_t}
= D_t(\E{X|\mathcal F_s}) + \E{D_s{(X)}|\mathcal F_t}.
$$
`$\Rightarrow$': 
For $X\in L^2(\mathcal F_T)$ and $t_{i-1}\in I$, $i\ge 1$, 
we have by (D6) and (D1)
\begin{align}\nonumber
D_{t_{i-1}}(X) &= D_{t_{i-1}}(\E{X|\F_{t_{i}}}) + \E{D_{t_{i}}(X)|\F_{t_{i-1}}}\\
&= D_{t_{i-1}}(\E{X|\F_{t_{i}}} - \E{X|\mathcal F_{t_{i-1}}} ) + \E{D_{t_{i}}(X)|\F_{t_{i-1}}}.
\label{pdp}
\end{align}
An induction argument based on \eqref{pdp} then yields 
that \eqref{DDH} holds with $\tilde D_t=D_t$, $t\in I$.
\end{proof}
\begin{Example}\label{D3} The formula~\eqref{DDH} in Proposition~\ref{prop:disc} gives a way to define a collection 
$D=(D_t)_{t\in I}$ satisfying axioms (D1)--(D6) for $s,t\in I$, which we call a {\em dynamic deviation 
measure on the grid $I$}. Comparison of \eqref{DDH} and \eqref{charac} suggests that one may obtain 
the values of a dynamic deviation measure as limit of the values of (suitably chosen) dynamic deviation 
measures on grids with vanishing mesh sizes. 
We next illustrate this for the $g$-deviation measures $\bar D^{\lambda}:=D^{g_\lambda}$, $\lambda>0$, 
corresponding to the driver functions $g_\lambda$ given by 
\begin{equation}\label{gcd}
g_\lambda(\omega, t,h,\tilde{h}) := \lambda \sqrt{|h|^2+\IR |\tilde{h}(x)|^2\nu(\td x)},\quad\quad \lambda>0,
\end{equation}
and random variables $X\in L^2(\F_T)$ of the form 
\begin{equation}\label{Xfg}
X = x + \int_0^T f(t)\td W_t + \int_{[0,T]\times\mathbb R^k\backslash\{0\}}g(t,y) \tilde N(\td t\times\td y)
\end{equation}
with $x\in\mathbb R$, $f\in C([0,T],\mathbb R^d)$ and $g\in C_0([0,T]\times\mathbb R^k, \mathbb R)$\footnote{$C([0,T],\mathbb R^d)$ and $C_0([0,T]\times\mathbb R^k,\mathbb R)$ denote the sets of continuous functions $f:[0,T]\mapsto\mathbb R^d$, and  of continuous functions $g:[0,T]\times\mathbb R^k\mapsto\mathbb R$ that are such that $\sup_{t\in[0,T]}|g(t,x)|\to 0$ as $|x|\to\infty$ and $\sup_{x\in\mathbb R^k\backslash\{0\}}\sup_{t\in[0,T]}\{|g(t,x)|/|x|^2\} <\infty$.}. We 
construct approximating sequences in terms of 
the conditional $CVaR$-deviation measures  given  by
$\tilde D_t(Y) := CVaR_{t,\alpha}(Y - \E{Y|\F_t})$ for $Y\in L^2(\F_T)$, $t\in[0,T]$, $\alpha\in(0,1)$, 
where for $Z\in L^2(\F_T)$  
$$CVaR_{t,\alpha}(Z)=\frac{1}{\alpha}\int_0^\alpha VaR_{t,b}(Z)db,\q 
VaR_{t,b}(Z) = \sup\{y\in\mathbb R: \mathbb P(Z<-y|\F_t)<b\},
$$  see Rockafellar \textit{et al.} (2006a).\\
Specifically, the expression in \eqref{DDH} suggests to \emph{scale} the value of 
conditional deviation measures corresponding to small time units in order to obtain in the limit 
a dynamic deviation measure. Denoting for $X$ of the form~\eqref{Xfg} 
$$M_{t_{i+1}}:=\E{X|\F_{t_{i+1}}},\q \Delta M_{i+1}:=M_{t_{i+1}}-M_{t_{i}},\q t_i=Ti/2^n,\q i=0,\ldots,2^n-1,$$ 
with $t_{2^n}=T$ and following this suggestion we specify the contribution to the total 
risk of 
$$\Delta M_{i+1} = \int_{t_{i}}^{t_{i+1}} f(s)\td W_s
 + \int_{(t_i,t_{i+1}]\times(\mathbb R^k\backslash\{0\})}g(s,y)\tilde N(\td s\times\td y),\q i=0,\ldots,2^n-1, 
$$ 
 by
$
\tilde D_{t_i}(\Delta M_{i+1}) := \sqrt{\Delta t_{i+1}}{CVaR}_{t_{i},\alpha}(\Delta M_{i+1}),$ 
$\Delta t_{i+1} = t_{i+1}- t_i$, 
which gives rise to the dynamic deviation measure $D^{(n)}=(D^{(n)}_t)_{t\in I_n}$ 
on $I_n:=\{t_i, i=0,\ldots, 2^n\}$ given by
\begin{eqnarray}\nonumber
D^{(n)}_t(X) &=& \sum_{t_i\ge t}\E{\left.\tilde D_{t_i}(\Delta M_{i+1})\right|\F_{t}} = 
 	\sum_{t_i\geq t}\sqrt{\sigma^2(t_i)}\Delta t_{i+1}\E{\left.CVaR_{t_{i},\alpha}\left(\frac{\Delta M_{i+1}}{\sqrt{\sigma^2(t_i)\Delta t_{i+1}}}\right)\right|\F_{t}},\\
&& \text{with}\ \sigma^2(t) := |f(t)|^2 + \int_{\mathbb R^k\backslash\{0\}}|g(t,x)|^2\nu(\td x),\ t\in I_n,
\label{Dns}
\end{eqnarray} 
where we used that $CVaR_{t_{i},\alpha}$ is positively homogeneous. As $\Delta M_{i+1}$ is infinitely divisible 
and $f$ and $g$ are bounded,  we have by an application of 
Lindeberg-Feller Central Limit Theorem (see {\em e.g.,} Durrett (2004), p.129) 
that, when we let $n\to\infty$ while keeping $t_i$ fixed the ratio $\Delta M_{i+1}/\sqrt{\sigma^2(t_i)\Delta t_{i+1}}$ 
converges in distribution to a standard normal random variable $\xi$.  
By uniform integrability and the independence of $\Delta M_{i+1}$ from $\F_{t_i}$
we have 
$$CVaR_{\alpha,t_i}\left(\frac{\Delta M_{i+1}}{\sqrt{\sigma^2(t_i)\Delta t_{i+1}}}\right) = 
CVaR_{\alpha}\left(\frac{\Delta M_{i+1}}{\sqrt{\sigma^2(t_i)\Delta t_{i+1}}}\right) 
\to CVaR_{\alpha}(\xi) = \frac{1}{\alpha}\int_0^\alpha \Phi^{-1}(u)\td u =: c_\alpha,$$ 
where $CVaR_{\alpha}(\cdot) = CVaR_{\alpha,0}(\cdot)$ and  
$\Phi^{-1}$ denotes the inverse of the standard normal distribution function $\Phi$. 	
Hence, letting $n\to\infty$ in \eqref{Dns} and deploying the uniform continuity of $f$ and $g$ 
we have for any $t\in[0,T]$ of the form $t=k/2^m$, $k,m\in\mathbb N$, 
	\begin{equation}\label{DDRM}
	D^{(n)}_t(X) \to c_\alpha\, \mathbb E\left[\left. \int_t^T \sqrt{|f(s)|^2 + \int_{\mathbb R^k\backslash\{0\}}|g(t,x)|^2\nu(\td x)} \td s \right|\mathcal F_t\right] = 
	\bar D^{{c_\alpha}}_t(X).
	\end{equation}
	\end{Example}

\section{Characterisation theorem}\label{sec5}

We show next that any  dynamic
deviation measure that satisfies a domination condition is a 
$g$-deviation measure for some driver function $g$.

\begin{definition}
A  dynamic deviation measure $D = (D_t)_{t\in[0,T]}$ 
is called {\em $\lambda$-dominated} if for all $t\in[0,T]$ and $X\in L^2(\mathcal F_T)$ we have
$$D_t(X) \leq \bar D^{\lambda}_t(X).$$
\end{definition}

\begin{theorem}
\label{Main} Let $D=(D_t)_{t\in[0,T]}$ be a collection of maps $D_t:L^2(\mathcal F_T)\to L_+^2(\mathcal F_t)$, $t\in[0,T]$.
Then $D$ is a dynamic deviation measure that is $\lambda$-dominated for some $\lambda> 0$
if and only if there exists a convex and positively homogeneous driver function $g$ of linear growth 
such that $D=D^g$. Furthermore, this driver function $g$ is unique $\td\mathbb P\times \td t$ a.e.
\end{theorem}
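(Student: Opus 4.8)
The plan is to prove the two implications separately and obtain uniqueness at the end, where it follows at once from Proposition~\ref{theoremequivalent}(iv). Throughout, $\bar D^\lambda = D^{g_\lambda}$ with $g_\lambda$ as in~\eqref{gcd}.

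\textbf{Sufficiency ($\Leftarrow$).} If $g$ is a convex, positively homogeneous driver of linear growth, then $D^g$ is already a dynamic deviation measure by Proposition~\ref{prop:charac}(ii), so only $\lambda$-domination must be checked. Here positive homogeneity lets me rescale the growth bound~\eqref{eq:ling}: replacing $(h,\tilde h)$ by $t(h,\tilde h)$, dividing by $t^2$ and letting $t\to\infty$ yields $g(s,h,\tilde h)\le K\sqrt{|h|^2+\IR|\tilde h(x)|^2\nu(\td x)} = g_K(s,h,\tilde h)$ for $\td\mathbb P\times\td t$-a.e. $(\omega,s)$. Proposition~\ref{theoremequivalent}(iv) then gives $D^g\le D^{g_K}=\bar D^K$, so $D^g$ is $K$-dominated.

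\textbf{Necessity ($\Rightarrow$): the compensator.} Let $D$ be a $\lambda$-dominated dynamic deviation measure and fix $X\in L^2(\F_T)$. By Remark~\ref{rem1}(i), $(D_t(X))_t$ is a nonnegative c\`adl\`ag supermartingale with $D_T(X)=0$, and it is dominated by the square-integrable $\bar D^\lambda_\cdot(X)$, so the Doob--Meyer decomposition furnishes a predictable increasing process $A^X$ with $A^X_0=0$ and $D_t(X)=\E{A^X_T-A^X_t|\F_t}$. Combining this with (D6) gives, for $s\le t$, the identity $\E{A^X_t-A^X_s|\F_s}=D_s(\E{X|\F_t})$; since $\E{X|\F_t}$ has representing pair $(H^X I_{[0,t]},\tilde H^X I_{[0,t]})$, $\lambda$-domination bounds the right-hand side by $\E{\int_s^t g_\lambda(u,H^X_u,\tilde H^X_u)\td u|\F_s}$. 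Hence $A^X_\cdot-\int_0^\cdot g_\lambda(u,H^X_u,\tilde H^X_u)\td u$ is a predictable finite-variation supermartingale vanishing at $0$, therefore decreasing, and so $\td A^X\ll\td t$ with a density $a^X$ satisfying $0\le a^X_s\le g_\lambda(s,H^X_s,\tilde H^X_s)$.

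\textbf{Necessity ($\Rightarrow$): identifying the driver.} It remains to realise $a^X_s$ as a single function of the instantaneous representing pair. For each deterministic $(h,\tilde h)$ I would take the $X$ with representing pair constantly equal to $(h,\tilde h)$ and set $g(s,\omega,h,\tilde h):=a^X_s(\omega)$; continuity in $(h,\tilde h)$ (available once convexity and local boundedness are known) together with separability of $L^2(\nu)$ yields a version that is $\mathcal P\otimes\mathcal B(\mathbb R^d)\otimes\mathcal U$-measurable. The local property~\eqref{localproperty}, together with the additivity of the compensator in time, then upgrades this to $a^X_s=g(s,\omega,H^X_s,\tilde H^X_s)$ for simple representing pairs $H^X=\sum_j h_j I_{G_j}$, whence $D_0(X)=\E{\int_0^T g(s,H^X_s,\tilde H^X_s)\td s}=D^g_0(X)$ by~\eqref{charac} for such $X$. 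Since simple $X$ are dense in $L^2(\F_T)$ and both $D_0$ and $D^g_0$ are $L^2$-continuous (Remark~\ref{rem1}(iii) and Lemma~\ref{lemmacont}), the equality $D_0=D^g_0$ extends to all of $L^2(\F_T)$, and Remark~\ref{rem1}(ii) propagates it to $D=D^g$. That $g$ is a genuine convex, positively homogeneous driver of linear growth then follows by reading Proposition~\ref{theoremequivalent} in reverse: convexity and positive homogeneity from (D3) and (D2) via parts (i),(ii); linear growth from $g\le g_\lambda$; strict positivity off the origin from (D4) (a null $g$ on a set of positive measure would make $D$ vanish on a non-$\F_t$-measurable position); and lower semicontinuity from convexity and local boundedness. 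Finally, if $D^g=D^{\tilde g}$ then $D^g\ge D^{\tilde g}$ and $D^{\tilde g}\ge D^g$, so Proposition~\ref{theoremequivalent}(iv) forces $g=\tilde g$ $\td\mathbb P\times\td t$-a.e., giving uniqueness.

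\textbf{Main obstacle.} I expect the crux to lie entirely in the necessity direction: showing that $\lambda$-domination forces absolute continuity of the Doob--Meyer compensator, and then extracting its density as one measurable function of the instantaneous representing pair (locality plus measurable selection, and the passage from simple to general $X$). These are precisely the steps that, in the theory of $g$-expectations, recover the BSDE driver of a dominated filtration-consistent nonlinear expectation (cf. Coquet {\em et al.}~(2002) and Royer~(2006)).
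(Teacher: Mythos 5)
Your sufficiency and uniqueness arguments are fine (the rescaling of the growth bound \eqref{eq:ling} by positive homogeneity to obtain $g\le g_K$ is a detail the paper leaves implicit), and the first half of your necessity argument is sound and genuinely different from the paper's: the paper never invokes Doob--Meyer, but instead builds, for each fixed pair $(h,\tilde h)$, the set functions $\mu_{h,\tilde{h}}(C_1,C_2)=D_0\left((I_{C_1}h\cdot W)_T+(I_{C_2}\tilde h\cdot\tilde N)_T\right)$ on the predictable $\sigma$-algebra, proves they are $\sigma$-finite measures (Lemma~\ref{lem:muC}, resting on Propositions~\ref{Lemmaadditive} and~\ref{prop:dis}), and extracts the candidate driver as a Radon--Nikodym density, $\lambda$-domination supplying absolute continuity with respect to $\td\mathbb P\times\td t$. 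Your compensator-comparison argument (a predictable finite-variation supermartingale vanishing at $0$ is non-increasing) reaches the analogous density bound $0\le a^X\le g_\lambda(\cdot,H^X,\tilde H^X)$ more quickly and cleanly.

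The gap lies in the identification step, and it is twofold. First, your derivation of the regularity of $g$ is circular as written: you need $(h,\tilde h)\mapsto g(t,\omega,h,\tilde h)$ to be convex (hence continuous) \emph{already} in order to produce a $\mathcal{P}\otimes\mathcal{B}(\mathbb{R}^d)\otimes\mathcal{U}$-measurable version and in order to pass from simple $X$ to general $X$ (the $L^2$-continuity of $D^g_0$ in Lemma~\ref{lemmacont} requires a convex driver of linear growth); yet you only obtain convexity at the very end ``by reading Proposition~\ref{theoremequivalent} in reverse'', which presupposes the representation $D=D^g$ that you are in the middle of proving. The circle can be broken, but only by deriving convexity, positive homogeneity and the bound $g\le g_\lambda$ directly from (D2), (D3) and $\lambda$-domination of $D$ \emph{before} any representation is available; this is exactly the content of the paper's Lemma~\ref{lemmaconvex2} (conditioning the densities on finitely generated sub-$\sigma$-algebras of $\mathcal P$ and using martingale convergence), and in your framework it would require a compensator-comparison argument such as $a^{\lambda X^1+(1-\lambda)X^2}\le\lambda a^{X^1}+(1-\lambda)a^{X^2}$ a.e., which you do not give. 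Second, the step ``locality plus additivity of the compensator in time upgrades this to $a^X_s=g(s,H^X_s,\tilde H^X_s)$ for simple representing pairs'' is asserted, not proved, and it is the hardest part of the theorem: the local property \eqref{localproperty} decomposes the \emph{random variable} over $\F_t$-measurable sets, not the \emph{integrand} over predictable sets, and bridging the two requires (D6) through Propositions~\ref{Lemmaadditive} and~\ref{prop:dis}; moreover one must handle the cross-interaction of the Brownian and jump components when their indicator supports differ, which the paper does with the two-argument measures and the decomposition \eqref{decompose}, followed by the multi-step approximation of Lemma~\ref{lemmaeq} before density arguments take over. You rightly flagged this as the main obstacle, but as it stands your proposal establishes the existence of an absolutely continuous compensator with the correct bound, not the existence of a single driver function $g$ with $D=D^g$.
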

\begin{proof}{}
We first verify uniqueness: If $\bar{g}$ is a driver function that satisfies $D^g = D^{\bar g}$, 
it follows from Proposition~\ref{theoremequivalent}(iv) that $g=\bar g$ $\td\mathbb P\times \td t$ a.e.
We note next that the implication `$\Leftarrow$' follows from Proposition~\ref{prop:charac}.
The remainder is devoted to the proof of the implication `$\Rightarrow$', 
which is established using a number of auxiliary results (the proofs of which are deferred to the end of the  section). 

Thus, let $D$ be a given  dynamic deviation measure that is $\lambda$-dominated, 
so that in particular $D_0$ is finite.  
We identify next a candidate driver function $g$. For the remainder of the proof we assume for the ease of presentation 
that $d=1$.
For fixed $h\in \mathbb{R}$ and
$\tilde{h}\in L^2(\nu(\td x))$ consider the mapping
$\mu_{h,\tilde{h}}: \mathcal{P}\times \mathcal{P}\to \R$ given by
$$\mu_{h,\tilde{h}}:C_1\times C_2\mapsto D_0
\left( (I_{C_1} h\cdot W)_T + (I_{C_2}\tilde h\cdot \tilde N)_T\right).$$

\begin{lemma}\label{lem:muC} Let $(h,\tilde h) \in\mathbb R\times L^2(\nu(\td x))$.\ 

\noindent{\bf (i)} $C \mapsto \mu_{h,\tilde{h}}(C,\emptyset),
C \mapsto \mu_{h,\tilde{h}}(\emptyset,C)$ and $C \mapsto \mu_{h,\tilde{h}}(C,C)$ are
$\s$-finite measures on $([0,T]\times\Omega,\mathcal{P})$.

\noindent{\bf (ii)} For any $C_1,C_2 \in \mathcal{P}$ we have
\be
\label{decompose}
\mu_{h,\tilde{h}}(C_1,C_2) = \mu_{h,\tilde{h}}(C_1 \setminus C_2,\emptyset) + \mu_{h,\tilde{h}}(\emptyset,C_2 \setminus
C_1) + \mu_{h,\tilde{h}}(C_1\cap C_2,C_1\cap C_2).\ee
\end{lemma}
As $D_0$ is $\lambda$-dominated  $C\mapsto \mu_{h,\tilde{h}}(C,C)$ is absolutely continuous with respect to
the measure $\td\mathbb P \times \td t$ and we conclude from the Radon-Nikodym theorem
that there exist an integrable non-negative density, say $R_{h,\tilde{h}}(s,\omega)$, 
that is such that $R_{0,0}=0$ and for any set $C\in \mathcal{P}$ 
\begin{eqnarray}
\label{Corroproof}
\mu_{h,\tilde{h}} (C, C) &=&\E{\int_0^T I_{C_s} R_{h,\tilde{h}}(s)
\td s},
\end{eqnarray}
where $C_s=\{\omega\in\Omega: (\omega,s) \in C \}$. 
In particular, we note that $\mu_{h,\tilde{h}}(C,\emptyset)=\mu_{h,0}(C,C)$ and
$\mu_{h,\tilde{h}}(\emptyset,C)=\mu_{0,\tilde{h}}(C,C)$ 
 satisfy \eqref{Corroproof}
with $R_{h,\tilde{h}}$ replaced by $R_{0,\tilde h}$ 
and $R_{h,0}$ respectively. We define the candidate driver function $g$ in terms of $R$ by
\be
\label{g}
g(t,\omega,h,\tilde{h}) := R_{h,\tilde{h}}(t,\omega), 
\qquad (t,\omega)\in[0,T]\times\Omega.\ee 
 The next result confirms that $g$ is a driver function.

\begin{lemma}
\label{lemmaconvex2}
There exists a version of $g$ such that, for $\td\mathbb P\times \td t$ a.e.~$(t,\omega)$, $(h,\tilde{h})\mapsto g(t,\omega,h,\tilde h)$ is continuous, convex, positively-homogeneous and dominated by $g_\lambda$. 
\end{lemma}

Note that $(t,\omega)\mapsto
g(t,\omega,h,\tilde{h})$ is predictable for every $(h,\tilde{h})\in\mathbb R\times L^2(\nu(\td x))$ and by Lemma~\ref{lemmaconvex2} $(h,\tilde{h})\mapsto g(t,\omega,h,\tilde h)$ is continuous in $(h,\tilde{h})$,   
so that by standard arguments $g$ can be approximated by $\mathcal{P} \otimes
\mathcal{B}(\mathbb{R^d}) \otimes \mathcal{U}$-measurable step functions and
$g$ itself may seen to be $\mathcal{P} \otimes \mathcal{B}(\mathbb{R^d})
\otimes \mathcal{U}$-measurable. Note further that $g(t,\omega,h,\tilde{h})$ is non-negative as
$R_{h,\tilde{h}}(t,\omega)$ is so for each $(h,\tilde{h})$, and
$g(s,\omega,0,0)=0$ since the density  $R_{0,0}(s,\omega)$ of the measure
$\mu_{0,0}$ is zero.
\noindent In the next result we show that $D_0$ may be identified with $D^g_0$.

\begin{lemma}
\label{lemmaeq}
Let $g$ be as in Lemma~\ref{lemmaconvex2}. 
For $X\in L^2(\mathcal F_T)$ we have $D_0(X) = D_0^g(X)$.
\end{lemma}

\noindent Lemma~\ref{lemmaeq} and Remark~\ref{rem1}(ii) imply that $D_t= D^g_t$ not only for $t=0$ but also for all other
$t\in(0,T]$. The proof is complete.
\end{proof}
\subsubsection*{Proofs of Lemmas~\ref{lem:muC}, \ref{lemmaconvex2} and \ref{lemmaeq}}
The proof of Lemma~\ref{lem:muC} is based on the following auxiliary result:
\begin{proposition}\label{prop:dis}
Let $D$ be a dynamic deviation measure and $t\in[0,T]$. 
If $A_1,\ldots,A_n\in \F_t$ and
$A_i \cap A_j = \emptyset$ for $i\neq j$ and $X_1, \ldots, X_n \in L^2(\F_T)$, then for any $t\in[0,T]$
\be
\label{local}
D_t\left(\sum_{i=1}^n I_{A_i}X_i\right)=\sum_{i=1}^n D_t( I_{A_i}X_i).
\ee
\end{proposition}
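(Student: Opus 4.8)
The plan is to deduce the $n$-set identity from the two-set local property \eqref{localproperty} by induction on $n$, the key point being that \eqref{localproperty} is itself a consequence of positive homogeneity (D2) and is therefore available for every $\F_t$-conditional deviation measure. A preliminary remark is that the disjoint sets $A_1,\dots,A_n$ need not cover $\Omega$; this causes no difficulty, since one may either append $A_0:=\Omega\setminus\bigcup_{i=1}^n A_i$ with $X_0:=0$ (which changes neither side, as $D_t(I_{A_0}X_0)=D_t(0)=0$ by normalisation and $\sum_{i=0}^n I_{A_i}X_i=\sum_{i=1}^n I_{A_i}X_i$), or simply run the induction below, which peels off one set at a time and never requires the family to exhaust $\Omega$.

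For the inductive step I would write $Z:=\sum_{i=1}^n I_{A_i}X_i$ and set $B:=A_n$, $Y:=\sum_{i=1}^{n-1}I_{A_i}X_i\in L^2(\F_T)$. Since $A_i\subseteq B^c$ for every $i<n$, the decomposition $Z=I_BX_n+I_{B^c}Y$ holds, and applying \eqref{localproperty} with $A=B\in\F_t$ gives
\be
D_t(Z)=I_{A_n}D_t(X_n)+I_{A_n^c}D_t(Y).
\ee
The induction hypothesis applied to $Y$ yields $D_t(Y)=\sum_{i=1}^{n-1}D_t(I_{A_i}X_i)$, after which it remains to simplify the two terms by means of (D2): on the one hand $I_{A_n}D_t(X_n)=D_t(I_{A_n}X_n)$; on the other hand, because $A_i\subseteq A_n^c$ and $D_t(I_{A_i}X_i)=I_{A_i}D_t(X_i)$ for $i<n$, one has $I_{A_n^c}D_t(I_{A_i}X_i)=I_{A_n^c}I_{A_i}D_t(X_i)=I_{A_i}D_t(X_i)=D_t(I_{A_i}X_i)$. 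Summing these contributions gives $D_t(Z)=\sum_{i=1}^n D_t(I_{A_i}X_i)$, completing the induction (whose base case $n=1$ is trivial).

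I do not anticipate a genuine obstacle: the argument rests only on positive homogeneity (D2) and normalisation, with neither the time-consistency axiom (D6) nor subadditivity playing any role. The only points demanding a little care are the treatment of families that do not exhaust $\Omega$ and the indicator bookkeeping around $I_{A_i}$ and $I_{A_n^c}$, which is precisely where (D2) is invoked to pass between $I_{A_i}D_t(X_i)$ and $D_t(I_{A_i}X_i)$.
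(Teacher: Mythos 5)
Your proof is correct and follows essentially the same route as the paper's: induction on $n$ using the two-set identity \eqref{localproperty}, which rests only on positive homogeneity (D2). The only cosmetic difference is that you split along $A_n$ while the paper splits along $B_{n-1}=\cup_{i<n}A_i$ (and first establishes the intermediate form $\sum_i I_{A_i}D_t(X_i)$ before passing to $\sum_i D_t(I_{A_i}X_i)$), which is the same decomposition viewed from the complementary set.
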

\begin{proof}{} Set $S_k:=\sum_{i=1}^{k}I_{A_i}X_i$ and $B_k=\cup_{i=1}^{k}A_i$, $k=1,\ldots, n$.
Let us first show by an induction argument that
\be
\label{missed}
D_t\left(S_n\right) = \sum_{i=1}^n I_{A_i}D_t(X_i).
\ee
Eqn.~(\ref{local}) is a direct consequence of (\ref{missed}) and (\ref{localproperty}).
Using (\ref{localproperty}) and the fact $B_{n-1}\cap A_n = \emptyset$ we have 
\begin{align*}
D_t\left(S_n\right) & = D_t(I_{B_{n-1}} S_{n-1}+I_{B_{n-1}^c}I_{A_n}X_n) = I_{B_{n-1}} D_t(S_{n-1}) + I_{B_{n-1}^c} D_t(I_{A_n}X_n)\\
&= I_{B_{n-1}}\sum_{i=1}^{n-1} I_{A_i}D_t(X_i) + I_{B_{n-1}^c}I_{A_n}D_t(X_n) =
\sum_{i=1}^{n} I_{A_i}D_t(X_i),
\end{align*}
where we used (\ref{localproperty}) and the induction assumption in the third equality. 
This completes the proof of \eqref{missed} and hence of the Lemma.
\end{proof}

\begin{proof}{\ of Lemma~\ref{lem:muC}}
\noindent{\bf (i)} Let us first show that $C \mapsto \mu_{h,\tilde{h}}(C,\emptyset)$ constitutes a
$\s$-finite measure. Clearly, $\mu_{h,\tilde{h}}(\cdot,\emptyset)$ is non-negative and
$\mu_{h,\tilde{h}}(\emptyset,\emptyset)=0$.
Next we verify that $C\mapsto \mu_{h,\tilde{h}}(C,\emptyset)$ is additive for
disjoint sets of the form $C_1:=(t_1, t_2]\times A$ and $C_2:=(t_3, t_4]\times B$ with
$A\in\F_{t_1}$ and $B\in\F_{t_3}$. We consider first 
the case $t_1 \leq t_3 \leq t_2 \leq t_4$ and $A\cap B=\emptyset$ (note that in this case
$C_1\cap C_2=\emptyset$). 
By deploying Propositions~\ref{Lemmaadditive} and \ref{prop:dis}
 we note that 
$\mu_{h,\tilde{h}}\left(((t_1, t_2]\times A)\cup((t_3,t_4]\times
B),\emptyset\right)$ is equal to
\begin{eqnarray*}
\lefteqn{D_0\left(I_A h\cdot W)_{t_1,t_3} + (I_{A\cup B} h \cdot W)_{t_3,t_2} +
(I_B h\cdot W)_{t_2,t_4}\right)}\\
&=& \E{D_{t_1}(( I_A h \cdot W)_{t_1,t_3})} +
\E{D_{t_3}((I_{A\cup B} h \cdot W)_{t_3,t_2})} +
\E{D_{t_2}((I_B h \cdot W)_{t_2,t_4})}
\\
&=& \E{D_{t_1}\left((I_A h\cdot W)_{t_1,t_3}\right)} 
+ \E{D_{t_3}\left((I_A h\cdot W)_{t_3,t_2}\right)
}\\&& + \E{D_{t_3}\left((I_B h\cdot W)_{t_3,t_2}\right)
} 
+ \E{D_{t_2}\left((I_B h\cdot W)_{t_2,t_4}\right)
}\\
&=& D_0\left((I_A h\cdot W)_{t_1,t_2}\right) + D_0\left((I_B h\cdot W)_{t_3,t_4}\right), 
\end{eqnarray*}
which is equal to 
$\mu_{h,\tilde{h}}((t_1, t_2]\times A,\emptyset) + \mu_{h,\tilde{h}}((t_3, t_4]\times B,\emptyset) $.
The cases $t_1 \leq t_2 < t_3 \leq t_4$ and  
$t_1 \leq t_3 \leq t_4 \leq t_2$ may be verified in a similar manner.
Thus, we may conclude that $\mu_{h,\tilde{h}}$ is additive on disjoint
sets of the form $(t_1, t_2]\times A$ and $(t_3, t_4]\times B$. 
As $D_0$ is
continuous in $L^2(\F_T)$ (see Remark~\ref{rem1}(iii))  and the collection of sets considered above is a semi-algebra
generating the predictable $\s$-algebra it follows that
$\mu_{h,\tilde{h}}(\cdot,\emptyset)$ is $\sigma$-finite.
The proofs that $C\mapsto\mu_{h,\tilde h}(\emptyset,C)$ and $C\mapsto\mu_{h,\tilde h}(C,C)$
are $\sigma$-finite measures are analogous, replacing in the equations 
above the term $h \cdot W$ by $
\tilde{h} \cdot \tilde{N}$ and $(h \cdot W +\tilde{h} \cdot \tilde{N})$, respectively.

\noindent{\bf (ii)} Define $C_1, C_2$ as in (i) and consider the case $t_1 \leq t_3 \leq t_2 \leq t_4$ 
with general (not necessarily disjoint) $A\in\F_{t_1}$ and $B\in\F_{t_3}$.
Expressing $X = I_A(h \cdot W)_{t_1,t_2} + I_B( \tilde{h} \cdot \tilde{N})_{t_3,t_4}$ 
as the sum of martingale increments
$$
X = I_A(h \cdot W)_{t_1,t_3} +I_{A\setminus B }(h \cdot W)_{t_3,t_2}+
I_{A\cap B }[(h \cdot W)_{t_3,t_2}+( \tilde{h} \cdot
\tilde{N})_{t_3,t_2}]+I_{B\setminus A}(\tilde{h}\cdot\tilde{N})_{t_3,t_2}
+ I_B( \tilde{h} \cdot \tilde{N})_{t_2,t_4}
$$
and using  Propositions~\ref{Lemmaadditive} and \ref{prop:dis} 
we have that $\mu_{h,\tilde{h}}(C_1,C_2)=D_0(X)$ is equal to
\begin{multline*}
\E{D_{t_1}(I_A( h \cdot W)_{t_1,t_3})} + {\mathbb E}\bigg[ D_{t_3}\bigg(
I_{A\setminus B}(h \cdot W)_{t_3,t_2}+
I_{A\cap B }[(h \cdot W)_{t_3,t_2}+( \tilde{h} \cdot \tilde{N})_{t_3,t_2}]
+I_{B\setminus A}(\tilde{h}\cdot\tilde{N})_{t_3,t_2}\bigg )
\bigg] \\ + \E{D_{t_2}(I_B( \tilde{h} \cdot \tilde{N})_{t_2,t_4})} \\
= \E{D_{t_1}(I_A(h \cdot W)_{t_1,t_3} )} + \E{ D_{t_3}(I_{A\setminus B}(h
\cdot W)_{t_3,t_2} )}+ \E{ D_{t_3}(I_{A\cap B} [(h\cdot
W)_{t_3,t_2}+(\tilde{h}\cdot \tilde{N})_{t_3,t_2}])}\\
+ \E{ D_{t_3}(I_{B\setminus A}( \tilde{h} \cdot
\tilde{N})_{t_3,t_2})} + \E{D_{t_2}(I_B(\tilde{h} \cdot
\tilde{N})_{t_2,t_4})}.
\end{multline*}
Thus, using Proposition \ref{Lemmaadditive} again we have
\begin{align*}
\mu_{h,\tilde{h}}\left(C_1, C_2\right) &= D_0(I_A( h \cdot W)_{t_1,t_3}+I_{A\setminus B}( h\cdot W)_{t_3,t_2}) 
 + D_0(I_{B\cap A} [(h\cdot W)_{t_3,t_2}+(\tilde{h}\cdot
\tilde{N})_{t_3,t_2}]) \\
&\hspace{0.5cm}+ D_0(I_{B\setminus A} ( \tilde{h}\cdot \tilde{N})_{t_3,t_2}+
I_B(\tilde{h}\cdot \tilde{N})_{t_2,t_4})\\
&=\mu_{h,\tilde{h}}\left(C_1\setminus
C_2,\emptyset\right)+\mu_{h,\tilde{h}}(C_1\cap C_2,C_1\cap
C_2)+\mu_{h,\tilde{h}}\left(\emptyset, C_2\setminus C_1\right).
\end{align*}
The cases $t_1\leq t_2< t_3\leq t_4$
and $t_1\leq t_3\leq t_4\leq t_2$ may be verified in a similar manner.
By the continuity of $D_0$ (Remark~\ref{rem1}(iii)) and
monotone class arguments (by keeping first  $C_1$ and then $C_2$ fixed) it follows that (\ref{decompose})
holds for all predictable sets, as asserted.
\end{proof}

\begin{proof}{\ of Lemma~\ref{lemmaconvex2}}
First of all, note that the predictable $\sigma$-algebra is generated by
countable many sets, say $A_1,A_2,\ldots.$ Fix $n\in\mathbb N$ and denote
$\mathcal{P}^n:=\sigma(A_1,\ldots,A_n).$
By considering finer partitions we may after relabeling assume without loss of
generality that the $A_i$ are disjoint. Denote by $\eta$ the measure $\eta:=\td\mathbb P\times \td t$ 
on $(\Omega\times[0,T],\mathcal P)$ and let
 $R^{n}_{h,\tilde{h}}={\it E}_\eta[R_{h,\tilde{h}}|\mathcal{P}^n]$.\footnote{Specifically, 
$R^{n}_{h,\tilde{h}}$ is the $\mathcal P^n$-measurable random variable 
satisfying $E_\eta[R_{h,\tilde h}U]=E_\eta[R^n_{h,\tilde h} U]$ for all bounded $\mathcal P^n$ 
random variables $U$, with $E_\eta[Z]=\int_0^TE[Z(s)]\td s$ for $Z\in L^1(\eta)$. 
 }
Since the filtration is generated by the disjoint sets $A_1,A_2,\ldots, A_n$ it is
standard to note that
\be
\label{convex}
R^{n}_{h,\tilde{h}}(s,\omega)=\sum_{i:\nu(A_i)\neq 0}
\frac{I_{A_i}(s,\omega)}{\eta(A_i)}\mu_{h,\tilde{h}}(A_i,A_i) \mbox{ for
}\td\mathbb P\times \td s \mbox{ a.e. }(s,\omega) .\ee
By possibly modifying $R^n_{h,\tilde{h}}$ on a zero-set we may assume that (\ref{convex}) holds for all $(s,\omega)\in[0,T]\times\Omega.$ 
It follows from (\ref{convex}) and the convexity and positive homogeneity of 
$(h,\tilde{h})\to \mu_{h,\tilde{h}}(A_i,A_i)$ that, for all fixed
$(s,\omega)$, $R^{n}_{h,\tilde{h}}(s,\omega)$ is convex and positively homogeneous in $(h,\tilde{h})$. 
Furthermore, we claim that $|R^n_{h,\tilde{h}}|\leq g_\lambda(h,\tilde{h})$. 
For suppose this were not the case, that is, for some
 $(h,\tilde{h})$ and $A_i$, $|R^n_{h,\tilde{h}}|> g_\lambda(h,\tilde{h})$ 
	for all $(s,\omega)\in A_i$. Then we would have for $X = (H \cdot W)_T + ( \tilde{H} \cdot
	\tilde{N})_T$ with $H_s=h I_{A_i}$ and
	$\tilde{H}_s=\tilde{h}I_{A_i}$ that $D_0( X )=\mu_{h,\tilde{h}}(A_i,A_i) = \E{\int_0^T I_{A_i}(s) R_{h,\tilde{h}}(s)\td s}$ 
satisfies		\begin{align*}
D_0(X)	&>\E{\int_0^T I_{A_i}(s)g_\lambda(h,\tilde h) \td s} = \E{\int_0^T g_\lambda(H_s,\tilde H_s) \td s} = 
\bar D_0^{\lambda}(X),
	\end{align*}
	which is in contradiction with the fact that $D$ is $\lambda$-dominated. 

Since $\mathcal{P}^n$ is an increasing sequence of $\sigma$-algebras with
$\cup_{n=1}^\infty \mathcal{P}^n=\mathcal{P}$ it follows from the martingale
convergence theorem that
$ R^{n}_{h,\tilde{h}}(t,\omega)={\it E}_\eta[R_{h,\tilde{h}}|\mathcal{P}^n](t,\omega)$ converges to 
${\it E}_\eta[R_{h,\tilde{h}}|\mathcal{P}](t,\omega)=R_{h,\tilde{h}}(t,\omega)$ 
for $\td\mathbb P\times \td t$ a.e. $(t,\omega).$
This convergence only holds up to a zero set. On this zero set, we may set $R_{h,\tilde{h}}(t,\omega)$ 
equal to $\limsup_n R^n_{h,\tilde{h}}(t,\omega)$.
Hence, this version of $R_{h,\tilde{h}}$ is dominated by $g_\lambda$ and is convex and positively 
homogeneous in $(h,\tilde{h})$ for every $(t,\omega)\in[0,T]\times\Omega$ as the limit of
convex and positively homogeneous functions.
The asserted continuity follows since every convex function that is locally bounded is continuous (see Theorem 2.2.9 in Zalinescu (2002)).
\end{proof}
\begin{proof}{\ of Lemma~\ref{lemmaeq}}
We split the proof in four steps.

\noindent\textbf{Step 1:} For $X = ((hI_{C_1})\cdot W)_T + ((\tilde{h}I_{C_2})\cdot
\tilde{N}_T$ for $(h,\tilde h)\in\mathbb R\times L^2(\nu(\td x))$ and $C_1, C_2\in\mathcal P$, 
we find by using $g(t,\omega,0,0)=0$ 
 that $D^g_0\left(X\right) = 
\E{\int_0^Tg(s,hI_{C_1}(s),\tilde{h}I_{C_2}(s))\td s}$
is equal to
\begin{multline}
\E{\int_0^T I_{C_1\setminus C_2}(s)g(s,h,0)\td s} + \E{\int_0^T
I_{C_2\setminus C_1}(s)g(s,0,\tilde{h})\td s} + \E{\int_0^T I_{C_1\cap
C_2}(s)g(s,h,\tilde{h})\td s}\\
= \mu_{h,\tilde{h}}(C_1\setminus C_2,\emptyset) +
\mu_{h,\tilde{h}}(\emptyset,C_2\setminus C_1) + \mu_{h,\tilde{h}}(C_1\cap C_2,
C_1 \cap C_2),
\label{eqimp}
\end{multline}
which is by (\ref{decompose}) equal to $\mu_{h,\tilde{h}}(C_1 , C_2) = D_0\left(X\right)$
(note that we only have to integrate over $C_1\cup C_2$ as $g(t,\omega,0,0)=0$). 

\noindent\textbf{Step 2:} Fix $t_i, t_{i+1}\in[0,T]$ with $t_i<t_{i+1}$ and
let $X = \left( (h_i I_{(t_i, t_{i+1}]})\cdot
W\right)_{t_i,t_{i+1}} + \left( ( \tilde{h}_i I_{(t_i,
t_{i+1}]})\cdot\tilde{N} \right)_{t_i,t_{i+1}}$
with $h_{i}:=\sum_{j=1}^{m} c_j I_{A_j}, \quad \tilde{h}_{i} = \sum_{j=1}^m
\tilde{c}_j I_{A_j}$,
and
$c_j\in\mathbb{R}$, $\tilde{c}_j \in L^2(\nu(\td x))$, and disjoint sets $A_j\in
\F_{t_i}$, $j=1,\ldots,m$, satisfying $\cup_j A_j=\Omega$ (we may assume w.l.o.g. 
that the $A_j$ are the same for $h$
and $\tilde{h}$ by setting some $c_j$ and $\tilde{c}_j$ equal to zero).
By step 1, denoting  $\Delta W_{{i+1}}=W_{t_{i+1}}-W_{t_i}$,
\begin{align*}
	\E{\int_{t_i}^{t_{i+1}} g(s, I_{A_j} c_j,
		I_{A_j} \tilde{c}_j)\td s} &= D_{0}\left(I_{A_j}c_j \Delta W_{{i+1}} + \IR
	I_{A_j}\tilde{c}_{j}(x)\tilde{N}((t_i,t_{i+1}]\times \td x)\right) 
	\\&= \E{D_{t_i}\left(I_{A_j}c_j \Delta W_{{i+1}} +\IR
		I_{A_j}\tilde{c}_{j}(x)\tilde{N}((t_i,t_{i+1}]\times \td x)\right)}.
\end{align*}
 Hence by Proposition~\ref{Lemmaadditive} $D_0(X)$  is equal to
\begin{eqnarray*}
\sum_{j=1}^m \E{D_{t_i}\left(I_{A_j}c_j \Delta W_{t_{i+1}} +\IR
I_{A_j}\tilde{c}_{j}(x)\tilde{N}((t_i,t_{i+1}]\times \td x)\right)}
= \E{\sum_{j=1}^m\int_{t_i}^{t_{i+1}} g(s, I_{A_j} c_j,
I_{A_j} \tilde{c}_j)\td s},
\end{eqnarray*}
which is equal to $\E{\int_{0}^{T}g(s, h_s,
\tilde{h}_s)\td s\bigg|\F_t} = {D}^g_0(X)$.

\noindent\textbf{Step 3:} Let $0\leq t_1 < \ldots<t_n=T$ be given.
For simple functions 
$X = \left( (\sum_{i=1}^l h_i I_{(t_i, t_{i+1}]})\cdot W\right)_T + \left(
(\sum_{i=1}^l \tilde{h}_i I_{(t_i, t_{i+1}]})\cdot\tilde{N} \right)_T$ for
$l\in \mathbb{N}$, with $h_i$ and $\tilde{h}_{i}$ as in step 2 we have 
by Proposition~\ref{Lemmaadditive}, step 2 
and Proposition~\ref{prop:charac}
\begin{align*}
D_0(X) &= \sum_{i=1}^l \E{D_{t_i}\left(\left( (h_i I_{(t_i, t_{i+1}]})\cdot
W\right)_{t_i,t_{i+1}} + \left( ( \tilde{h}_i I_{(t_i,
t_{i+1}]})\cdot\tilde{N} \right)_{t_i,t_{i+1}}\right)}\\
&= \sum_{i=1}^l \E{{D}^g_{t_i}\left(\left( (h_i I_{(t_i, t_{i+1}]})\cdot
W\right)_{t_i,t_{i+1}} + \left( ( \tilde{h}_i I_{(t_i,
t_{i+1}]})\cdot\tilde{N} \right)_{t_i,t_{i+1}}\right)}= {D}^g_0(X).
\end{align*}
Hence, we have $D_0(X)={D}^g_0(X)$ for all simple functions $X$.

\noindent\textbf{Step 4:} That $D_0(X)={D}^g_0(X)$ not only for simple functions but also for general $X\in L^2(\F_T)$ follows 
by the continuity of $D^g_0$ and $D_0$ in Lemma~\ref{lemmacont} (note that $g$ is of linear growth) 
and Remark~\ref{rem1}(iii).
\end{proof}

\section{Representation results, $m$-stability and time-consistency}
\label{subsec42}
We next turn to a dual representation result for general dynamic deviation measures
which is, as we show in Theorem~\ref{theoremdual1}, given in terms of {\em additively $m$-stable} representing sets (see Definition~\ref{def:mstable}). Specifically, we show that additive $m$-stability is in some sense necessary and sufficient to obtain the time-consistency axiom (D6)---see Proposition~\ref{theoremdual2}.  The proof of these results rests on auxiliary dual representation results. Using these results we first establish in Theorem~\ref{Nondom} that an integral representation of the form \eqref{charac} holds for any dynamic deviation measure even if the domination condition is not satisfied. 

In particular, we may strengthen the characterisation of dynamic deviation measures given in Theorem~\ref{Main} as follows:

\begin{theorem}\label{Nondom}
Let $D=(D_t)_{t\in[0,T]}$ be a collection of maps $D_t:L^2(\mathcal F_T)\to L^0(\mathcal F_t)$, $t\in[0,T]$.
Then $D$ is
a  dynamic deviation measure if and only if there exists a 
convex positively homogeneous driver function $g$ such that for any $t\in[0,T]$ and $X\in L^2(\mathcal F_T)$ 
\begin{equation}\label{nond-rep}
D_t(X) = \E{\int_t^T g(s,H^X_s, \tilde H^X_s)\td s\Bigg|\mathcal F_t}
\end{equation}
and $\E{\int_0^T g(s,H^X_s, \tilde H^X_s)^2\td s}<\infty$.
\end{theorem}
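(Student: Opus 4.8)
The plan is to prove the two implications separately, obtaining the non-trivial direction by approximating a general dynamic deviation measure from below by $\lambda$-dominated ones, to which the characterisation of Theorem~\ref{Main} applies, and then letting the associated drivers increase to the desired $g$. The implication ``$\Leftarrow$'' proceeds exactly as in the proof of Proposition~\ref{prop:charac}: given the representation \eqref{nond-rep} with a convex, positively homogeneous driver $g$ (no longer assumed of linear growth), the verification of (D1)--(D4) and of the time-consistency identity (D6) uses only the tower property, the convexity, positive homogeneity and normalisation $g(s,0,0)=0$ of $g$, and the linearity of $X\mapsto(H^X,\tilde H^X)$; none of these steps invokes a linear-growth bound. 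Lower semi-continuity (D5) follows from the lower semi-continuity of $g$ by the same Fatou argument as in Proposition~\ref{prop:charac}(ii), while the integrability hypothesis guarantees, via Jensen's and the Cauchy--Schwarz inequalities, that $D_t(X)\in L^2_+(\F_t)$. Hence $D$ is a dynamic deviation measure.

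For ``$\Rightarrow$'' let $D$ be a dynamic deviation measure and, for $\lambda>0$, define its inf-convolution with the canonical dominated measure $\bar D^{\lambda}=D^{g_\lambda}$ by
\[
D^{\lambda}_t(X):=\essinf_{Y\in L^2(\F_T)}\bigl\{D_t(X-Y)+\bar D^{\lambda}_t(Y)\bigr\},\qquad t\in[0,T].
\]
Choosing $Y=X$ shows $D^{\lambda}_t\le\bar D^{\lambda}_t$, so that $D^{\lambda}$ is $\lambda$-dominated; choosing $Y=0$ shows $D^{\lambda}_t\le D_t$; and since $\lambda\mapsto\bar D^{\lambda}$ is non-decreasing the family $D^{\lambda}$ is non-decreasing in $\lambda$, with $D^{\lambda}_t(X)\uparrow D_t(X)$ as $\lambda\to\infty$ (the penalty $\bar D^{\lambda}_t(Y)$ forces the representing pair of the near-optimal $Y$ to vanish, whence $Y$ tends to an $\F_t$-measurable random variable and $D_t(X-Y)\to D_t(X)$ by (D1) and (D5)). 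Normalisation, positivity, positive homogeneity and subadditivity are inherited by $D^{\lambda}$ in a routine way; the decisive point is time-consistency (D6).

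To obtain (D6) for $D^{\lambda}$ I would, for $s\le t$, split each competitor as $Y=\E{Y|\F_t}+\bigl(Y-\E{Y|\F_t}\bigr)$. Applying (D6) to both $D$ and $\bar D^{\lambda}$ and using translation invariance (D1) to discard the $\F_t$-measurable part $\E{Y|\F_t}$ inside $D_t$ and $\bar D^{\lambda}_t$, one sees that the ``$\F_t$-measurable part'' and the ``$\F_t$-centred part'' of $Y$ enter the objective additively and may be optimised independently; the essential infimum therefore factorises as $D^{\lambda}_s(\E{X|\F_t})+\E{D^{\lambda}_t(X)|\F_s}$, which is precisely (D6). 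With $D^{\lambda}$ thus shown to be a $\lambda$-dominated dynamic deviation measure, Theorem~\ref{Main} furnishes a convex, positively homogeneous driver $g^{\lambda}$ of linear growth with $D^{\lambda}=D^{g^{\lambda}}$, and by Proposition~\ref{theoremequivalent}(iv) the monotonicity of $\lambda\mapsto D^{\lambda}$ transfers to $g^{\lambda}\le g^{\lambda'}$, $\td\mathbb P\times\td t$-a.e., for $\lambda\le\lambda'$.

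I would then set $g:=\sup_{\lambda}g^{\lambda}$. As a supremum of convex, positively homogeneous, continuous functions, $g$ is convex, positively homogeneous and lower semi-continuous in $(h,\tilde h)$, and $g\ge g^{1}>0$ off the origin, so $g$ is a genuine driver function (now without any linear-growth bound). Conditional monotone convergence applied to the representations of the $D^{\lambda}$ yields, for every $t$ and $X$,
\[
D_t(X)=\lim_{\lambda\to\infty}\E{\int_t^T g^{\lambda}(s,H^X_s,\tilde H^X_s)\td s\Big|\F_t}=\E{\int_t^T g(s,H^X_s,\tilde H^X_s)\td s\Big|\F_t},
\]
which is \eqref{nond-rep}; the integrability condition stated in the theorem then follows from the $L^2$-boundedness of the supermartingale $(D_t(X))_t$ (Remark~\ref{rem1}(i)) via Doob's inequality and the standard energy estimate for the BSDE \eqref{bsde}--\eqref{bsde2}. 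I expect the main obstacle to be the verification of (D6) for the inf-convolution: making the factorisation of the essential infimum rigorous requires a measurable-selection/downward-directedness argument, and one must separately check that $D^{\lambda}$ inherits the lower semi-continuity (D5) so that Theorem~\ref{Main} is indeed applicable, a point that rests on the coercivity of the dominating measure $\bar D^{\lambda}$.
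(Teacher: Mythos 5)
Your ``$\Leftarrow$'' direction is fine and is essentially the paper's own argument (the paper also just invokes the proof of Proposition~\ref{prop:charac}, and you are right that linear growth plays no role there). The genuine gap is in the crucial step of ``$\Rightarrow$'': the claim that $D^{\lambda}_t(X)\uparrow D_t(X)$ as $\lambda\to\infty$. Your justification is that an $\epsilon$-optimal $Y^{\lambda}$ satisfies $\bar D^{\lambda}_t(Y^{\lambda})\leq D_t(X)+\epsilon$, hence $\bar D^{1}_t(Y^{\lambda})\leq (D_t(X)+\epsilon)/\lambda\to 0$, so that $Y^{\lambda}$ ``tends to an $\F_t$-measurable random variable'' and (D1), (D5) give $\liminf_{\lambda} D_t(X-Y^{\lambda})\geq D_t(X)$. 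This does not work: $\bar D^{1}_t(Y^{\lambda})\to 0$ only controls $\E{\int_t^T\big(|H^{Y^{\lambda}}_s|^2+\IR|\tilde H^{Y^{\lambda}}_s(x)|^2\nu(\td x)\big)^{1/2}\td s\,\Big|\,\F_t}$, an $L^1(\td\mathbb P\times\td t)$-type norm of the representing pair, whereas the $L^2(\F_T)$-distance of $Y^{\lambda}$ from $L^2(\F_t)$ is the $L^2(\td\mathbb P\times\td t)$-norm of that pair, namely $\E{(Y^\lambda-\E{Y^\lambda|\F_t})^2|\F_t}=\E{\int_t^T\big(|H^{Y^{\lambda}}_s|^2+\IR|\tilde H^{Y^{\lambda}}_s(x)|^2\nu(\td x)\big)\td s\,\Big|\,\F_t}$. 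Smallness of the former gives no bound at all on the latter, so the minimising sequence need not converge, nor even be bounded, in $L^2(\F_T)$, and (D5)---which is lower semi-continuity only along $L^2(\F_T)$-convergent sequences---cannot be invoked. What the inf-convolution produces in the limit is the lower semi-continuous envelope of $D_t$ with respect to the much weaker seminorm $\bar D^1_t$; the identity of this envelope with $D_t$ is exactly what has to be proved, and it is not a consequence of (D5). (The claim itself is true, but for reasons your argument does not supply.)

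A second, related, non-routine point is (D4) for $D^{\lambda}$: you must show that an essential infimum of strictly positive quantities stays strictly positive whenever $X\notin L^2(\F_t)$, which is a statement of the same nature as the missing convergence and is not inherited ``in a routine way'' from $D$ and $\bar D^{\lambda}$. The natural repair of both points is duality: $\bar D^{\lambda}_t$ is the conditional support function of the zero-conditional-mean $\xi$ whose representing pairs are bounded by $\lambda$, so the lower semi-continuous hull of your $D^{\lambda}_t$ is the support function of $\S^D\cap\Q_{\F_t}\cap\A^{\lambda}$---which is precisely the approximating sequence $D^{(n)}$ of \eqref{defdk}--\eqref{defak} that the paper uses. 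Establishing that these increase to $D_t$ (via boundedness of $\S^D$ and density of $\bigcup_n\A^n$) and satisfy (D4) (via a scaling argument inside the dual set) is the content of Propositions~\ref{prop2}, \ref{propmastable} and Lemma~\ref{lemmadk}. In other words, once the gap is filled your construction collapses onto the paper's dual-set truncation; the inf-convolution does not circumvent the dual machinery, it only adds work (the factorisation of the infimum for (D6), which \emph{is} fixable by the directedness/local-property argument you mention, and (D5), which indeed follows from the Lipschitz bound $|D^{\lambda}_t(X)-D^{\lambda}_t(X')|\leq\bar D^{\lambda}_t(X-X')$). From that point on---monotone drivers $g^{\lambda}$ via Theorem~\ref{Main} and Proposition~\ref{theoremequivalent}(iv), $g:=\sup_{\lambda}g^{\lambda}$, monotone convergence---your proof coincides with the paper's.
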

The mentioned notion of additive $m$-stability is the requirement of stability under additive pasting 
of subsets of the collections of (conditionally) zero-mean random variables given by
$$\mathcal{Q}_{\F_t}:=\{\xi \in L^2(\F_T)|\E{\xi|\F_t}=0\},\quad
\mathcal{Q}:=\mathcal{Q}_{\F_0}=\{\xi \in L^2(\F_T)|\E{\xi}=0\}.$$
\begin{definition}\label{def:mstable}
	A set $\mathcal{S}\subset \mathcal{Q}$ is called {\em additively $m$-stable} if for any 
	$\xi^1,\xi^2\in \mathcal{S}$ and $t\in[0,T]$, $\xi^2 + \E{\xi^1 - \xi^2|\F_t}$
	 defines an element of $\mathcal{S}$. 
\end{definition}
Denoting for a given set $\mathcal S\subset\mathcal Q$   
$$\mathcal{S}_{s,t}:=\{\E{\xi|\F_t}-\E{\xi|\F_s}|\xi\in\mathcal{S}\},\q s,t\in[0,T],$$ 
we note that $\mathcal S = \mathcal S_{0,T}$ and
 that a necessary and sufficient condition for $\S$ to be additively $m$-stable is 
$$\S=\S_{0,t}+\S_{t,T},\q  \text{for any $t\in [0,T],$}
$$ 
where $A+B$ denotes the direct sum of the sets $A$ and $B$.\footnote{That is, $A+B:=\{a+b: a\in A, b\in B\}$} 

\begin{theorem}\label{mstable}
Let $D=(D_t)_{t\in[0,T]}$ be a collection of maps $D_t:L^2(\mathcal F_T)\to L^0(\mathcal F_t)$, $t\in[0,T]$, 
satisfying (D4). Then  $D$ is a dynamic deviation measure if and only if for some convex, bounded, closed 
subset $\mathcal S^D$ of $\mathcal Q$ that contains zero and is additively $m$-stable we have
	\begin{eqnarray}
	\label{cr}
	D_t(X)&=&\esssup_{\xi\in \mathcal{S}^D\cap \mathcal{Q}_{\F_t}}\E{\xi X|\F_t}, \quad t\in[0,T].
\end{eqnarray}
\end{theorem}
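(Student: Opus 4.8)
The plan is to prove both implications by relating the essential-supremum representation \eqref{cr} to the conditional duality of sublinear functionals, with additive $m$-stability encoding exactly the time-consistency axiom (D6).

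For the implication ``$\Leftarrow$'' (a set of the stated type yields a dynamic deviation measure) I would fix $\mathcal S^D$ as described and check the axioms directly from \eqref{cr}. Axioms (D1)--(D3) and (D5) are routine: translation invariance follows since $\E{\xi m|\F_t}=m\E{\xi|\F_t}=0$ for $\xi\in\mathcal Q_{\F_t}$ and $m\in L^\infty(\F_t)$; positive homogeneity follows by pulling an $\F_t$-measurable $\lambda\ge 0$ out of both the conditional expectation and the $\esssup$; subadditivity is the pointwise inequality $\esssup(f+g)\le\esssup f+\esssup g$; and (D5) follows because each $X\mapsto\E{\xi X|\F_t}$ is $L^2$-continuous and a supremum of continuous affine maps is lower semicontinuous. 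The substantive point is (D6). Here I would use that, for a set containing $0$, additive $m$-stability is equivalent to the direct-sum decomposition $\mathcal S^D\cap\mathcal Q_{\F_s}=\mathcal S^D_{s,t}+\mathcal S^D_{t,T}$ for all $s\le t$ (obtained by iterating the pasting identity $\xi^2+\E{\xi^1-\xi^2|\F_t}\in\mathcal S^D$). Writing $\xi=\eta+\zeta$ with $\eta\in\mathcal S^D_{s,t}$ ($\F_t$-measurable and $\F_s$-conditionally centred) and $\zeta\in\mathcal S^D_{t,T}$ ($\F_t$-conditionally centred), one obtains $\E{\xi X|\F_s}=\E{\eta\,\E{X|\F_t}|\F_s}+\E{\E{\zeta X|\F_t}|\F_s}$, because $\eta$ is $\F_t$-measurable and $\E{\zeta|\F_t}=0$. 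Since $\eta$ and $\zeta$ vary independently, the $\esssup$ of the sum splits into the sum of the two $\esssup$s, the first recovering $D_s(\E{X|\F_t})$ and the second, after interchanging $\esssup$ and $\E{\cdot|\F_s}$, recovering $\E{D_t(X)|\F_s}$; this is exactly \eqref{eq:gendev}.

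For the implication ``$\Rightarrow$'' I would first obtain a static representation at $t=0$: since $D_0$ is sublinear by (D2)--(D3), lower semicontinuous by (D5), nonnegative by (D4) and finite, the Fenchel--Moreau theorem gives $D_0(X)=\sup_{\xi\in\mathcal S^D}\E{\xi X}$ with $\mathcal S^D:=\{\xi\in L^2(\F_T):\E{\xi X}\le D_0(X)\text{ for all }X\}$, which is convex and closed by construction, contains $0$ since $D_0\ge 0$, lies in $\mathcal Q$ since (D1) forces $\E{\xi}=0$, and is bounded in $L^2$ precisely because $D_0$ is finite everywhere. To upgrade to the conditional formula \eqref{cr} and to obtain additive $m$-stability, I would invoke Theorem~\ref{Nondom} to write $D_t(X)=\E{\int_t^T g(s,H^X_s,\tilde H^X_s)\td s|\F_t}$ for a convex, positively homogeneous driver $g$. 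As $g(s,\omega,\cdot)$ is sublinear and lower semicontinuous it is the support function of a predictable, closed, convex set $\mathcal A_s(\omega)\ni 0$, and by the It\^o isometry any square-integrable predictable selection $(z,\tilde z)$ of $\mathcal A$ vanishing on $[0,t]$ produces $\xi=(z\cdot W)_T+(\tilde z\cdot\tilde N)_T\in\mathcal S^D\cap\mathcal Q_{\F_t}$ with $\E{\xi X|\F_t}=\E{\int_t^T(z_s\cdot H^X_s+\ang{\tilde z_s,\tilde H^X_s}_\nu)\td s|\F_t}$. A measurable-selection argument shows the pointwise supremum defining $g$ is attained along such selections, so taking $\esssup$ recovers $D_t(X)$ and yields \eqref{cr} with $\mathcal S^D$ the $L^2$-closed convex hull of these integrals. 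Finally, additive $m$-stability is transparent at this level: pasting one selection on $[0,t]$ with another on $(t,T]$ is again an $\mathcal A$-selection, and the associated integral equals $\xi^2+\E{\xi^1-\xi^2|\F_t}$.

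I expect the main obstacle to be the essential-supremum bookkeeping: both the splitting of the $\esssup$ of a sum into a sum of $\esssup$s and the interchange of $\esssup$ with $\E{\cdot|\F_s}$ require the relevant families to be upward directed, and upward-directedness is supplied exactly by the pasting built into additive $m$-stability (given two candidates, one pastes them along the $\F_t$-set on which one dominates the other to form a larger admissible element). The secondary technical points are the measurable-selection step needed to realise the pointwise support-function supremum by genuine predictable integrands, and, in the converse direction, the derivation that the recursion (D6) forces the decomposition $\mathcal S^D\cap\mathcal Q_{\F_s}=\mathcal S^D_{s,t}+\mathcal S^D_{t,T}$ rather than merely the trivial inclusion.
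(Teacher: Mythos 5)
The decisive flaw is in your proof of ``$\Rightarrow$'': you invoke Theorem~\ref{Nondom} to pass from the static Fenchel--Moreau representation of $D_0$ to the conditional representation \eqref{cr}, but in the paper's logical order Theorem~\ref{Nondom} is proved \emph{from} Theorem~\ref{mstable}, so your argument is circular. Concretely, the proof of Theorem~\ref{Nondom} in Section~\ref{p:nondom} rests on Lemma~\ref{lemmadk}, whose approximating measures $D^{(n)}_t(X)=\esssup_{\xi\in(\S^D\cap\Q_{\F_t})\cap\A^n}\E{\xi X|\F_t}$ are defined in terms of the very set $\S^D$ whose existence Theorem~\ref{mstable} asserts, and whose convergence $D^{(n)}_t(X)\nearrow D_t(X)$ uses the boundedness of that set. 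The paper's actual proof of ``$\Rightarrow$'' never touches the BSDE/integral representation: it combines the $\F_t$-conditional Fenchel--Moreau duality at \emph{every} $t$ (Proposition~\ref{prop2}), the identification $\S_{D_t}=\S_{D_0}\cap\Q_{\F_t}$ of the conditional dual sets with slices of the time-zero dual set (Proposition~\ref{propmastable}, proved by computing the indicator $J_{\S_{D_0}}$ of \eqref{Dual2} and using (D6) and (D1)), and a Hahn--Banach separation argument showing that failure of additive $m$-stability of $\S_{D_0}$ would contradict (D6) (the ``$\Leftarrow$'' half of Proposition~\ref{theoremdual2}). Even if Theorem~\ref{Nondom} were granted, your measurable-selection step has a further gap: the driver $g$ obtained there is only nonnegative, convex and positively homogeneous, not of linear growth, so $g(s,\omega,\cdot)$ need not be continuous on $\mathbb{R}^d\times L^2(\nu(\td x))$, the dual supremum need not be attained, and a selection realising it need not be square-integrable---yet square-integrability is exactly what you need to form $\xi=(z\cdot W)_T+(\tilde z\cdot\tilde N)_T\in L^2(\F_T)$. (The paper confronts this in Theorem~\ref{theoremdual1} and resolves it by truncation, working with $g^n$, $C^n$ and $\A^n$.)

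In the ``$\Leftarrow$'' direction your outline follows the paper's Proposition~\ref{theoremdual2}, but your justification of the crucial interchange $\sup_{\zeta\in\S^D_{t,T}}\E{\E{\zeta X|\F_t}}=\E{\esssup_{\zeta\in\S^D_{t,T}}\E{\zeta X|\F_t}}$ is incorrect. You claim the required upward directedness is ``built into additive $m$-stability'' by pasting two candidates along the $\F_t$-set where one dominates; that operation is $I_A\xi^1+I_{A^c}\xi^2$ for $A\in\F_t$, i.e.\ pasting along \emph{events}, whereas additive $m$-stability only licenses pasting in \emph{time}, $\xi^2+\E{\xi^1-\xi^2|\F_t}$---and for $\xi^1,\xi^2\in\Q_{\F_t}$ this expression collapses to $\xi^2$, producing nothing new. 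A convex, closed, additively $m$-stable set containing zero is not in any evident way stable under event-pasting (for a set $\{0,\xi^1,\xi^2\}$ of conditionally centred elements, the closed convex hull is additively $m$-stable in a coarse filtration, yet the event-pasted element generally lies outside it), so directedness does not come for free. The paper draws directedness from a different source: the \emph{canonical} dual sets $\S_{D_t}$ determined by \eqref{Dual2} are stable under event-pasting (Remark~\ref{rd}), because the defining indicator localises over $A$ and $A^c$. To repair your argument you would need either to pass to these canonical sets (and then verify that doing so leaves the relevant suprema unchanged) or to supply a genuinely different justification; as written, this step---which you yourself flag as the main obstacle---is unsupported.
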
 

In the next result we call a $\mathcal{P}\otimes \mathcal{B}(\mathbb{R}^d)\otimes \mathcal{U}$-measurable subset $C=(C_t)_{t\in[0,T]}$ 
of $[0,T]\times \Omega\times \mathbb{R}^d\times L^2(\nu(\td x))$ closed, convex or non-empty 
if for $\td\mathbb P\times \td t$ a.e.  $(t,\omega)\in[0,T]\times\Omega,$
the sets $C_t(\omega)$ are closed, convex or non-empty. and we denote by $\mathrm{int}(C)$ 
the collection of interiors of the sets $C_t(\omega)$, $(t,\omega)\in[0,T]\times\Omega$.
\begin{theorem} \label{theoremdual1}
Let $D=(D_t)_{t\in[0,T]}$ be a collection of maps $D_t:L^2(\mathcal F_T)\to L^0(\mathcal F_t)$, $t\in[0,T]$.
Then $D$ is a 
dynamic deviation measure if and only if there exists a $\mathcal{P}\otimes
\mathcal{B}(\mathbb{R}^d)\otimes \mathcal{U}$-measurable set $C^D=(C^D_t)_{t\in[0,T]}$ that is convex, closed
with $0\in\mathrm{int}(C)$, such that  $D$ satisfies 
	the representation in \eqref{cr} with a bounded set $\mathcal S^D$ given in terms of $C^D$ by
\begin{eqnarray}
	\label{S}
\mathcal{S}^D&=&\Big\{\xi \in\mathcal{Q}\bigg| (H^\xi_t,\tilde{H}^\xi_t)\in C^D_t\mbox{ for all }t\in[0,T]\Big\}.
	\end{eqnarray}
\end{theorem}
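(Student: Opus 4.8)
The plan is to prove both implications by combining the integral representation of Theorem~\ref{Nondom} with fibrewise convex duality, and then invoking the dual characterisation of Theorem~\ref{mstable} to take care of time-consistency and the remaining axioms. The bridge between the two formulations is the linear isometry $\xi\mapsto(H^\xi,\tilde H^\xi)$ from $\mathcal Q$ onto $L^2_d(\mathcal P,\td\mathbb P\times\td t)\times L^2(\mathcal P\times\mathcal B(\Rs),\td\mathbb P\times\td t\times\nu(\td x))$ furnished by \eqref{mrep}. Under this map $\E{\xi|\F_t}$ corresponds to restricting the representing pair to $[0,t]$, so the additive-pasting operation $\xi^2+\E{\xi^1-\xi^2|\F_t}$ translates into the time-concatenation of representing pairs that equals $(H^{\xi^1},\tilde H^{\xi^1})$ on $[0,t]$ and $(H^{\xi^2},\tilde H^{\xi^2})$ on $(t,T]$. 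This observation is what converts the abstract property of additive $m$-stability into a fibrewise (``rectangular in time'') condition on the representing pairs, which is precisely the structure encoded in \eqref{S}.

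For the implication `$\Leftarrow$', suppose such a $C^D$ is given, with $\mathcal S^D$ in \eqref{S} bounded and the representation \eqref{cr} in force. I would first check that $\mathcal S^D$ is convex (from convexity of the fibres $C^D_t$), closed (from closedness of the fibres together with $L^2$-completeness), and contains zero (since $0\in C^D_t$). The key remaining property is additive $m$-stability, and this is immediate from the fibrewise description: for $\xi^1,\xi^2\in\mathcal S^D$ the representing pair of $\xi^2+\E{\xi^1-\xi^2|\F_t}$ is the time-concatenation described above, whose values lie in $C^D_s$ on $[0,t]$ (being those of $\xi^1$) and in $C^D_s$ on $(t,T]$ (being those of $\xi^2$); hence the concatenated pair lies in $C^D_s$ for all $s$, so $\xi^2+\E{\xi^1-\xi^2|\F_t}\in\mathcal S^D$. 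With $\mathcal S^D$ thus verified to be convex, bounded, closed, additively $m$-stable and containing zero, Theorem~\ref{mstable} yields that $D$ is a dynamic deviation measure.

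For the implication `$\Rightarrow$', suppose $D$ is a dynamic deviation measure. Theorem~\ref{Nondom} supplies a convex, positively homogeneous driver $g$ with $D_t(X)=\E{\int_t^T g(s,H^X_s,\tilde H^X_s)\td s|\F_t}$. Since $(h,\tilde h)\mapsto g(s,\omega,h,\tilde h)$ is convex, positively homogeneous, lower semicontinuous and nonnegative, it is the support function of the closed convex set $C^D_s(\omega):=\{(a,\tilde a):a\cdot h+\langle\tilde a,\tilde h\rangle_{\nu}\le g(s,\omega,h,\tilde h)\ \forall(h,\tilde h)\}$, that is $g(s,\omega,\cdot)=\sigma_{C^D_s(\omega)}(\cdot)$. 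The strict positivity of $g$ off the origin (driver positivity) translates into $0\in\mathrm{int}(C^D_s(\omega))$, finiteness of $g$ forces the fibres $C^D_s$ to be bounded, and measurability of $g$ gives, through a Castaing representation, the required $\mathcal P\otimes\mathcal B(\mathbb R^d)\otimes\mathcal U$-measurability of $s\mapsto C^D_s$. Defining $\mathcal S^D$ by \eqref{S}, it remains to recover \eqref{cr}. Here I would use that for $\xi\in\mathcal S^D\cap\mathcal Q_{\F_t}$ the conditional It\^o isometry gives $\E{\xi X|\F_t}=\E{\int_t^T(H^\xi_s\cdot H^X_s+\langle\tilde H^\xi_s,\tilde H^X_s\rangle_{\nu})\td s|\F_t}$, and then maximise over $\xi$ fibrewise: the pointwise supremum of the integrand over $(H^\xi_s,\tilde H^\xi_s)\in C^D_s$ equals $\sigma_{C^D_s}(H^X_s,\tilde H^X_s)=g(s,H^X_s,\tilde H^X_s)$, which reproduces the integral representation and hence $D_t(X)=\esssup_{\xi\in\mathcal S^D\cap\mathcal Q_{\F_t}}\E{\xi X|\F_t}$; the boundedness of $\mathcal S^D$ then follows from the $L^2$-integrability bound $\E{\int_0^T g(s,H^X_s,\tilde H^X_s)^2\td s}<\infty$ of Theorem~\ref{Nondom} together with the boundedness of the fibres.

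The main obstacle is this last exchange of the essential supremum over $\xi$ with the time-integral and the conditional expectation. It requires a measurable-selection argument producing, for the given $X$, a (near-)optimal predictable selection $(H^{\xi^*}_s,\tilde H^{\xi^*}_s)\in C^D_s$ attaining $g(s,H^X_s,\tilde H^X_s)$ up to $\varepsilon$, together with a verification that the resulting $\xi^*$ is square-integrable (so lies in $\mathcal Q$) and belongs to $\mathcal S^D\cap\mathcal Q_{\F_t}$; the boundedness of $\mathcal S^D$ and the interior condition $0\in\mathrm{int}(C^D)$ are exactly what keep these selections admissible and control their integrability. Carrying out the measurability of the fibre map $s\mapsto C^D_s$ and the attendant Castaing/measurable-selection machinery, with due care for the infinite-dimensional jump component $L^2(\nu(\td x))$, is the technical heart of the argument.
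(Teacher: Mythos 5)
Your high-level strategy (Theorem~\ref{Nondom} plus fibrewise convex duality plus Theorem~\ref{mstable}) is the same as the paper's, but as written the proposal has two genuine gaps.

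First, in the direction `$\Leftarrow$' you never verify (D4), and this cannot be skipped: (D4) is a \emph{standing hypothesis} of Theorem~\ref{mstable}, not part of the equivalence it asserts. The properties you do check (convex, closed, bounded, contains zero, additively $m$-stable) do not imply (D4): your argument never uses $0\in\mathrm{int}(C^D)$, so if it were complete it would apply equally to $C^D_t\equiv\{0\}$, for which $\mathcal S^D=\{0\}$, the representation \eqref{cr} gives $D\equiv 0$, and (D4) fails. The interior condition is exactly what rules this out, and the paper closes the step with Lemma~\ref{lemmaint}(ii): $0\in\mathrm{int}(C^D)$ produces, for every $X\in L^2(\F_T)\setminus L^2(\F_t)$, some $\xi'\in\mathcal S^D$ with $\mathbb P(\E{\xi' X|\F_t}>0)>0$, which by Proposition~\ref{prop2}(ii) yields (D4); only then is Theorem~\ref{mstable} applicable.

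Second, in the direction `$\Rightarrow$' you dualise the driver $g$ of Theorem~\ref{Nondom} directly; but that $g$ is only lower semi-continuous and need not be continuous or of linear growth, so exact maximisers need not exist, and the near-optimal predictable selection together with square-integrability of the resulting $\xi^*$ --- which you yourself call ``the technical heart'' --- is left unproven. This is precisely the difficulty the paper's proof is organised around: it works with the $n$-dominated approximations $D^{(n)}$ of \eqref{defdk} (Lemma~\ref{lemmadk}), whose drivers $g^n$ are of linear growth and hence continuous, so the dual sets $C^n$ are measurable, subdifferentials are non-empty, and exact measurable selections give \eqref{cr} for each $D^{(n)}$; then $C:=\cup_n C^n$, closedness via the identity $C^n_t=C_t\cap\{|z|_*\leq n\}$ (obtained from identifying $\mathcal M^n$ with $\mathcal S\cap\mathcal A^n$), the interior condition via Lemma~\ref{lemmaint}(i), and monotone convergence finish the argument. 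Relatedly, your derivation of the boundedness of $\mathcal S^D$ is not valid: the bound $\E{\int_0^T g(s,H^X_s,\tilde H^X_s)^2\,\td s}<\infty$ holds for each fixed $X$, and boundedness of each fibre $C^D_s(\omega)$ is pointwise in $(s,\omega)$, so together they give no uniform $L^2$-bound on $\mathcal S^D$. In the paper, boundedness is inherited from the dual set of Proposition~\ref{prop2} (through Theorem~\ref{mstable}), which is then \emph{identified} with the set in \eqref{S}; if you wish to keep your route, boundedness would instead follow from \eqref{cr} at $t=0$ and finiteness of $D_0$ via the uniform boundedness principle, but that is not the argument you gave.
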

The proofs of Theorems \ref{Nondom}, \ref{mstable} and \ref{theoremdual1}
are given below.
\smallskip

\begin{remark}[Relation to strong time-consistency of dynamic risk-measures]{\rm 
The characterisation in Theorem \ref{theoremdual1} is reminiscent of analogous characterisation 
results of (strong) time-consistency of dynamic risk measures 
available in the literature. If we call a set $\mathcal{S}'\subset \mathcal{M}$ {\em multiplicatively $m$-stable}  if
	for every $\xi^1,\xi^2\in \mathcal{S}'$ and $t\in[0,T]$
	the element  $L_t := \xi^1_t\xi^2_T/\xi^2_t$ is contained in $\mathcal{S}',$
we note that under multiplicative $m$-stability of $\S'$ we have the decomposition
$\S'=\S'_{0,T}=\S'_{0,t}\S'_{t,T}$ with  $\mathcal{S}'_{s,t}:=\{\E{\xi|\F_t}/\E{\xi|\F_s}|\xi\in\mathcal{S}'\}$ (with $0/0=0$),
so that the set $\S'$ is stable under `multiplicative' pasting.
It is well-known that coherent risk measures are (strongly) time-consistent 
precisely if the representing sets in the corresponding dual representations are 
multiplicatively $m$-stable; see among many others Chen and Epstein (2002) (where multiplicative $m$-stablility is called `rectangular property'), Riedel (2004), Delbaen~(2006), Artzner {\em et al.}~(2007) or  F\"ollmer and Schied~(2011). 
Specifically, in a 
Brownian setting it is shown in Delbaen (2006) that multiplicative $m$-stability of 
a convex and closed set $\mathcal{S}'\subset \mathcal{M}:=\{\xi \in L^1_+(\F_T)|\E{\xi}=1\}$ 
 containing $1$ corresponds to the existence of a $\mathcal{P}\otimes
	\mathcal{B}(\mathbb{R}^d)\otimes \mathcal{U}$-measurable, closed and convex set $C'$ containing $0$ such that  
$\mathcal{S}'=\{\xi\in\mathcal{M}|(q^\xi_s,\psi^\xi_s)\in C'_s\mbox{ for all }s\in[0,T]\}$, where $(q^\xi,\psi^\xi)$ is  
related to the stochastic logarithm of $\xi$ by
	$\xi=\mathcal{E}\Big((q^\xi\cdot W)_T+(\psi^\xi\cdot \tilde{N})_T\Big)$ with $\mathcal{E}(\cdot)$ 
	denoting the Dol\'eans-Dade exponential. This result implies that time-consistent coherent risk measures on $L^\infty$ satisfy the representation
\begin{eqnarray}
\label{delbaen}
\p_t(X)&=&\esssup_{\xi \in\S'\cap \mathcal{M}_{\F_t}}\E{-\xi X|\F_t},\ \mbox{with $\mathcal{M}_{\F_t}:=\{\xi\in L^1_+(\F_T)|\E{\xi|\F_t}=1\}$ and}\nonumber\\
\S'&=&\Big \{\xi\in L^1_+(\F_T)|(q^\xi_s,\psi^\xi_s)\in C'_s\mbox{ for all }s\in[0,T]\Big\}.
\end{eqnarray}
This result is generalized in Delbaen {\em et al.} (2010) to convex risk measures. 
As a counterpart of Theorem 3.1 in Delbaen (2006), which concerns multiplicatively $m$-stable sets
in a Brownian filtration, we have from Theorem \ref{theoremdual1} and Propositions~\ref{prop2}--\ref{theoremdual2} below 
that 
{\em a closed and convex set $\S\subset \mathcal{Q}$ containing $0$ is additively $m$-stable if and only if, for some $\mathcal{P}\otimes
\mathcal{B}(\mathbb{R}^d)\otimes \mathcal{U}$-measurable set $C^*=(C^*_t)_{t\in[0,T]}$ 
that is convex, closed and contains $0$, we have
$\mathcal{S}=\{\xi \in\mathcal{Q}| (H^\xi_t,\tilde{H}^\xi_t)\in C^*_t\mbox{ for all }t\in[0,T]\}.
$
} 
}
\end{remark}

\noindent{\bf Auxiliary representation results.} Our starting point is the $\F_t$-conditional version of the duality result given in Theorem 1 in Rockafellar {\em et al.} (2006a). 
\begin{proposition}
	\label{prop2}
	Let $t\in [0,T]$ and let the map $D_t:L^2(\mathcal F_T)\to L^0(\mathcal F_t)$ be given.

\noindent{\bf (i)} $D_t$ satisfies (D1)-(D3) and (D5) and maps $L^2(\F_T)$ to $L_+^2(\F_t)$ 
if and only if there exists a bounded, closed and convex set $\mathcal{S}_{D_t}\subset
	\mathcal{Q}_{\F_t}$ containing zero such
	that \be
	D_t(X)=\esssup_{\xi\in \mathcal{S}_{D_t}}\E{\xi X|\F_t},\q X\in L^2(\F_T).
	\label{Dual1}
	\ee
The set	$\S_{D_t}$ is uniquely determined by its (convex) indicator function $J_{\S_{D_t}}:L^2(\F_T)\to\{0,\infty\}$ given by
\be
	\label{Dual2}
	J_{\S_{D_t}}(\xi):=\esssup_{X\in L^2(\F_T)}\{\E{\xi X|\F_t}-D_t(X)\}.\ee

\noindent{\bf (ii)}	Assume the conditions in (i) are satisfied. Then $D_t$ satisfies (D4) 
if and only if for every $X\in L^2(\F_T)$ with $X\notin L^2(\F_t)$ there 
	exists $\xi\in \S_{D_t}$ such that $\mathbb P[\E{\xi X|\F_t}>0]>0$.
\end{proposition}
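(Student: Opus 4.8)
The plan is to prove (i) by conditional convex duality: axioms (D2)--(D3) make $D_t$ sublinear, and the dual object of a lower semi-continuous sublinear functional is the indicator of a closed convex set whose support function recovers the functional, so that \eqref{Dual1} is exactly this support function and \eqref{Dual2} is the associated conjugate. The sufficiency direction (`$\Leftarrow$') is a routine verification of the axioms from \eqref{Dual1}: (D1) holds since $\E{\xi m|\F_t}=m\,\E{\xi|\F_t}=0$ for $m\in L^\infty(\F_t)$ and $\xi\in\mathcal{Q}_{\F_t}$; (D2) holds because $\esssup$ commutes with multiplication by a nonnegative $\F_t$-measurable scalar; (D3) is the subadditivity of $\esssup$; nonnegativity follows from $0\in\mathcal{S}_{D_t}$ and square-integrability from the boundedness of $\mathcal{S}_{D_t}$ together with a conditional Cauchy--Schwarz estimate; and (D5) follows because each map $X\mapsto\E{\xi X|\F_t}$ is $L^2$-continuous, so that their supremum is lower semi-continuous.

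For the necessity direction (`$\Rightarrow$') I would define $J$ by \eqref{Dual2} and set $\mathcal{S}_{D_t}:=\{\xi\in L^2(\F_T):J(\xi)=0\}$. Positive homogeneity of $D_t$ yields $\E{\xi(\lambda X)|\F_t}-D_t(\lambda X)=\lambda(\E{\xi X|\F_t}-D_t(X))$ for scalars $\lambda>0$, so the attainable values form a cone and $J$ is positively homogeneous, hence $\{0,\infty\}$-valued; since $D_t\ge 0$ and $D_t(0)=0$ we have $J(0)=0$, so $0\in\mathcal{S}_{D_t}$, while convexity and closedness of $\mathcal{S}_{D_t}$ are inherited from those of the conjugate $J$. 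The inclusion $\mathcal{S}_{D_t}\subset\mathcal{Q}_{\F_t}$ follows from (D1): if $\E{\xi|\F_t}\neq 0$ on a set of positive measure, then testing $J(\xi)$ against $\F_t$-measurable $X=m$ (for which $D_t(m)=0$) and letting $m$ grow forces $J(\xi)=\infty$. Boundedness of $\mathcal{S}_{D_t}$ reflects the finiteness of $D_t$ as an $L^2_+(\F_t)$-valued, hence everywhere finite, support function.

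The main obstacle is the biconjugation identity $D_t(X)=\esssup_{\xi\in\mathcal{S}_{D_t}}\E{\xi X|\F_t}$ with \emph{equality}, i.e. the conditional Fenchel--Moreau theorem. The enabling technical fact is the local structure: using \eqref{localproperty} and Proposition~\ref{prop:dis} one checks that $\mathcal{S}_{D_t}$ is stable under $\F_t$-measurable pasting, $I_A\xi^1+I_{A^c}\xi^2\in\mathcal{S}_{D_t}$ for $A\in\F_t$, so that the family $\{\E{\xi X|\F_t}:\xi\in\mathcal{S}_{D_t}\}$ is directed upward and the essential supremum is realised as an increasing limit compatible with conditioning. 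I would then obtain equality either by a direct conditional Hahn--Banach separation of the point $(X,D_t(X)-\varepsilon)$ from the closed convex epigraph of $D_t$, normalising the separating functional into $\mathcal{S}_{D_t}$, or---more economically---by localising through the unconditional scalarisations $X\mapsto\E{I_A D_t(X)}=\E{D_t(I_A X)}$, $A\in\F_t$, applying Theorem~1 of Rockafellar {\em et al.}~(2006a) and reassembling by a measurable exhaustion argument. The genuinely new difficulties relative to the static case are the interchange of $\esssup$ with $\E{\cdot|\F_t}$ (handled by the directedness above) and the measurable selection of the separating functionals. Uniqueness of $\mathcal{S}_{D_t}$ is then immediate, since the correspondence between closed convex sets and their support functions is bijective and $J$ in \eqref{Dual2} is precisely the conjugate realising it.

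For (ii), given (i), axiom (D4) reduces to a property of $\mathcal{S}_{D_t}$. Nonnegativity is already secured by $0\in\mathcal{S}_{D_t}$, and $D_t(X)=0$ for $X\in L^2(\F_t)$ because then $\E{\xi X|\F_t}=X\,\E{\xi|\F_t}=0$ for every $\xi\in\mathcal{S}_{D_t}\subset\mathcal{Q}_{\F_t}$. Hence the only remaining content of (D4) is the converse, namely that $D_t(X)$ is strictly positive on a set of positive measure whenever $X\notin L^2(\F_t)$; unwinding $D_t(X)=\esssup_{\xi\in\mathcal{S}_{D_t}}\E{\xi X|\F_t}$ shows this holds precisely when for each such $X$ there is some $\xi\in\mathcal{S}_{D_t}$ with $\mathbb{P}[\E{\xi X|\F_t}>0]>0$, which is the stated condition.
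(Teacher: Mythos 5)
Your sketch actually supplies more than the paper does: the paper never proves this proposition, but simply asserts that Theorem 1 of Rockafellar \emph{et al.}~(2006a) ``can be seen to also hold true conditionally'', pointing to Riedel (2004), Ruszczy\'nski--Shapiro (2006) and Cheridito--Kupper (2011). The route you outline for `$\Rightarrow$' --- the $\{0,\infty\}$-valued conjugate \eqref{Dual2}, the inclusion $\mathcal{S}_{D_t}\subset\mathcal{Q}_{\F_t}$ via (D1), $\F_t$-pasting stability giving directedness, and biconjugation via the scalarisations $\E{D_t(I_AX)}=\E{I_AD_t(X)}$ reassembled over $A\in\F_t$ --- is precisely the standard conditionalisation those references use, and your part (ii) is correct.

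There is, however, a genuine gap in your `$\Leftarrow$' direction: the claim that boundedness of $\mathcal{S}_{D_t}$ together with conditional Cauchy--Schwarz yields $D_t(X)\in L^2_+(\F_t)$. Conditional Cauchy--Schwarz gives $\E{\xi X|\F_t}\le\E{\xi^2|\F_t}^{1/2}\E{X^2|\F_t}^{1/2}$, but an $L^2$-norm bound on $\mathcal{S}_{D_t}$ gives no uniform control of the conditional second moments $\E{\xi^2|\F_t}$ as $\xi$ ranges over the set, so the essential supremum need not be square-integrable. Concretely, for $t\in(0,T)$ take disjoint sets $A_n\in\F_t$ with $\mathbb{P}(A_n)=2^{-n}$ (built from $W^1_t$, say), put $\xi_n:=2^{n/2}I_{A_n}\,\mathrm{sgn}(W^1_T-W^1_t)$, so that $\E{\xi_n|\F_t}=0$ and $\|\xi_n\|_{L^2}=1$, and let $\mathcal{S}$ be the closed convex hull of $\{0,\xi_1,\xi_2,\dots\}$: this is a bounded, closed, convex subset of $\mathcal{Q}_{\F_t}$ containing zero. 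With $X:=\sum_n 2^{-n/2}\xi_n\in L^2(\F_T)$ one computes $\E{\xi_n X|\F_t}=2^{n/2}$ on $A_n$, hence $D_t(X)\ge 2^{n/2}$ on $A_n$ and $\E{D_t(X)^2}\ge\sum_n 2^n\,\mathbb{P}(A_n)=\infty$, while (D1)--(D3) and (D5) all hold. So this is not a repairable slip in your write-up: under the literal hypotheses (norm-bounded, closed, convex, $0\in\mathcal{S}_{D_t}\subset\mathcal{Q}_{\F_t}$) the $L^2_+(\F_t)$-mapping conclusion is false, and any correct proof of this direction must use a stronger property --- e.g.\ a conditional bound $\esssup_{\xi\in\mathcal{S}_{D_t}}\E{\xi^2|\F_t}\in L^\infty(\F_t)$. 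Note that even the directedness of Remark~\ref{rd}, which is automatic for sets of the form \eqref{Dual2}, only upgrades the conclusion to $\E{D_t(X)}\le C\|X\|_{L^2}$, i.e.\ integrability rather than square-integrability. The paper glosses over exactly this point by deferring to references, so flagging and fixing it (by strengthening the notion of boundedness in the statement, or weakening the asserted range) is the substantive work this proposition requires.

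A smaller repair: you cannot invoke Theorem 1 of Rockafellar \emph{et al.}~(2006a) verbatim for the scalarised functional $X\mapsto\E{D_t(I_AX)}$, because that functional vanishes on \emph{every} $\F_t$-measurable random variable, not merely on constants, and hence violates the static positivity axiom their theorem presupposes. What suffices instead is plain Fenchel--Moreau biconjugation for finite, lower semicontinuous, sublinear functionals on $L^2(\F_T)$: finiteness plus (D5) and Fatou give lower semicontinuity, hence continuity, Banach--Steinhaus gives boundedness of the dual set, and the local property \eqref{localproperty} then patches the scalarised representations into the conditional one, exactly as in your second proposed route.
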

\begin{Remark}\label{rd}
Note that by (\ref{Dual2}) we have for any set $A\in\F_t$ and $\xi^1,\xi^2\in \S_{D_t}$  that $I_A \xi^1+I_{A^c} \xi^2\in\S_{D_t}.$ Sets having this property are \emph{directed}.\footnote{A set $\S$ is called directed if for any $\xi^1,\xi^2\in\S$ there exists $\bar{\xi}\in\S$ with $\bar{\xi}\geq \xi^1\vee \xi^2$.}
\end{Remark}

Hence, $D_t(X)$ admits a robust representation with representing set given by a collection of signed measures.
This proposition is stated in Rockafellar {\em et al.} (2006a) in a static setting but it can be seen to also hold true
conditionally on $\mathcal F_t$---see for instance Riedel (2004), Ruszczy\'nski and Shapiro (2006), or Cheridito and Kupper (2011) for related arguments.

For dynamic deviation measures the property (D6) induces a specific structure of 
the sets $\mathcal S_{D_t}$, $t\in[0,T]$, which we specify in the next results.
A first observation is as follows:
\begin{proposition}
\label{propmastable}Let $t\in [0,T]$ and let $D$ be a dynamic deviation measure and denote
$\S^D:=\S_{D_T}$. We have that the set $\mathcal S_{D_t}$ in the representation (\ref{Dual1}) 
of $D_t$ is such that  $\S_{D_t}=\S^D\cap \mathcal{Q}_{\F_t}=\S^D_{t,T}$.
\end{proposition}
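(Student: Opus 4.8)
The plan is to read off both identities from a single ``projection lemma''. Throughout write $\S^D:=\S_{D_0}$ for the dual set of $D_0$ furnished by Proposition~\ref{prop2} (this is the natural reading, since in Theorems~\ref{mstable} and \ref{theoremdual1} one has $\S^D\subset\Q=\mathcal Q_{\F_0}$). By that proposition each $\S_{D_t}\subset\mathcal Q_{\F_t}$ is the unique bounded closed convex set containing $0$ whose conditional indicator function $J_{\S_{D_t}}$ vanishes exactly on it, which means
$$\S_{D_t}=\{\xi\in\mathcal Q_{\F_t}:\ \E{\xi X|\F_t}\le D_t(X)\ \text{for all }X\in L^2(\F_T)\},$$
and, for $t=0$, $\S^D=\{\xi\in\Q:\ \E{\xi X}\le D_0(X)\ \text{for all }X\}$. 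The key claim I would establish is: for every $\xi\in\S^D$ and $t\in[0,T]$ the variable $\xi-\E{\xi|\F_t}$ lies in $\S_{D_t}$.

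To prove this projection lemma I would fix $\xi\in\S^D$, set $\eta:=\xi-\E{\xi|\F_t}\in\mathcal Q_{\F_t}$, and verify $\E{\eta X|\F_t}\le D_t(X)$ by testing against an arbitrary $A\in\F_t$. For such $A$ and $X$ I introduce the test variable $Y:=I_A(X-\E{X|\F_t})$, which satisfies $\E{Y|\F_t}=0$, hence $Y\in\mathcal Q_{\F_t}$. Using the tower property and the $\F_t$-measurability of $\E{\xi|\F_t}$ one computes $\E{I_A\,\E{\eta X|\F_t}}=\E{\eta I_A X}=\E{\xi Y}$. Now the domination $\xi\in\S^D$ gives $\E{\xi Y}\le D_0(Y)$; axiom (D6) at $s=0$ together with $\E{Y|\F_t}=0$ and $D_0(0)=0$ gives $D_0(Y)=\E{D_t(Y)}$; and the local property~\eqref{localproperty} together with translation invariance (D1) gives $D_t(Y)=I_A D_t(X)$. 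Chaining these yields $\E{I_A\,\E{\eta X|\F_t}}\le\E{I_A D_t(X)}$, and since $A\in\F_t$ is arbitrary this forces $\E{\eta X|\F_t}\le D_t(X)$ a.s.; as $X$ was arbitrary, $\eta\in\S_{D_t}$.

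With the lemma in hand the two identities follow quickly. For $\S_{D_t}=\S^D\cap\mathcal Q_{\F_t}$, the inclusion ``$\subseteq$'' comes from integrating the defining inequality: any $\xi\in\S_{D_t}\subset\mathcal Q_{\F_t}$ has $\E{\xi X}=\E{\E{\xi X|\F_t}}\le\E{D_t(X)}\le D_0(X)$, where the final step uses (D6) and $D_0\ge0$ (from (D4)), so $\xi\in\S^D$. The inclusion ``$\supseteq$'' is the special case $\E{\xi|\F_t}=0$ of the projection lemma, where $\xi-\E{\xi|\F_t}=\xi$. For $\S^D\cap\mathcal Q_{\F_t}=\S^D_{t,T}$, recall $\S^D_{t,T}=\{\xi-\E{\xi|\F_t}:\xi\in\S^D\}$: each such element lies in $\mathcal Q_{\F_t}$, and by the projection lemma and the identity just proved it lies in $\S_{D_t}=\S^D\cap\mathcal Q_{\F_t}$, giving ``$\subseteq$''; conversely any $\eta\in\S^D\cap\mathcal Q_{\F_t}$ satisfies $\eta=\eta-\E{\eta|\F_t}\in\S^D_{t,T}$.

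The substantive step is the projection lemma, and within it the main obstacle is the passage from the unconditional domination by $D_0$ to a conditional domination by $D_t$. Testing against $I_A X$ directly fails, because (D6) would then contribute the unwanted term $D_0(\E{I_A X|\F_t})\ge0$; choosing instead $Y=I_A(X-\E{X|\F_t})$ is the decisive device, since $\E{Y|\F_t}=0$ makes $D_0(\E{Y|\F_t})=0$, so the deviation accumulated up to time $t$ does not pollute the estimate. I would also check the routine facts borrowed from Proposition~\ref{prop2}---that $J_{\S_{D_t}}(\xi)=0$ is equivalent to $\E{\xi X|\F_t}\le D_t(X)$ for all $X$---and confirm that (D1) may be applied with the possibly unbounded $\F_t$-measurable shift $\E{X|\F_t}$, which is legitimate by the extension of translation invariance to $L^2(\F_t)$ noted after the definition of conditional deviation measure.
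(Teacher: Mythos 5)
Your proof is correct, and your reading of $\S^D$ as the dual set $\S_{D_0}$ of $D_0$ is the right one: the ``$\S_{D_T}$'' in the statement is evidently a typo (that set would reduce to $\{0\}$ since $D_T\equiv 0$ on $L^2(\F_T)$, and the paper's own argument, as well as Theorem~\ref{mstable} and Proposition~\ref{theoremdual2}, work with $J_{\S^D}(\xi)=\sup_{X}\{\E{\xi X}-D_0(X)\}$).

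Your route uses the same ingredients as the paper's --- \eqref{Dual1}--\eqref{Dual2}, (D6), the extension of (D1) to $L^2(\F_t)$-shifts, the local property \eqref{localproperty}, and the $\{0,\infty\}$-dichotomy of the indicator functions --- but organizes them differently, and the differences are substantive. The paper proves the two inclusions of $\S_{D_t}=\S^D\cap\Q_{\F_t}$ separately and declares $\S^D\cap\Q_{\F_t}=\S^D_{t,T}$ to be clear: for $\S_{D_t}\subseteq\S^D$ it bounds $J_{\S^D}(\xi)$ by $\E{J_{\S_{D_t}}(\xi)}=0$, invoking directedness for a sup--esssup interchange; for $\S^D\cap\Q_{\F_t}\subseteq\S_{D_t}$ it argues by contradiction, extracting from \eqref{Dual2} a violating $X'$ and a non-null $A\in\F_t$ and testing $J_{\S^D}$ against what is implicitly the re-centred variable $I_AX'-\E{I_AX'|\F_t}$ (its displayed inequality $J_{\S^D}(\xi)\ge\E{\xi I_AX'}-\E{D_t(I_AX')}$ is only valid after this re-centring, using $\xi\in\Q_{\F_t}$ and (D1) --- exactly the point you isolate). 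Your projection lemma packages this re-centring trick once, proved directly and for \emph{arbitrary} $\xi\in\S^D$ rather than only for conditionally centred $\xi$, and this buys three things. First, the inclusion $\S^D_{t,T}\subseteq\S^D\cap\Q_{\F_t}$, which the paper labels clear, genuinely needs an argument: it amounts to $\xi-\E{\xi|\F_t}\in\S^D$ for $\xi\in\S^D$, a weak form of additive $m$-stability that is otherwise only established later via Proposition~\ref{theoremdual2}; your lemma delivers it for free. Second, your proof of $\S_{D_t}\subseteq\S^D$ simply integrates the pointwise inequality $\E{\xi X|\F_t}\le D_t(X)$, sidestepping the directedness-based interchange. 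Third, the direct (rather than contrapositive) formulation makes explicit why $I_AX$ is the wrong test variable and $I_A(X-\E{X|\F_t})$ the right one, which is precisely the step the paper leaves implicit.
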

\noindent{\it Proof of Proposition \ref{propmastable}.}
Let $\xi\in L^2(\F_T)$ and $t\in[0,T]$. For brevity we denote throughout the proof 
$\S = \S^D$, $\S_t = \S_{D_t}$ and $\S_{t,T} = \S^D_{t,T}$.
As it is clear that 
$\S\cap \Q_{\F_t}=\S_{t,T}$ (noting that $\S_{t,T}\subset \Q_{\F_t}$), the remainder 
of the proof is concerned with showing that the sets $\S\cap \Q_{\F_t}$ and $\S_{t}$ are equal.

Noting that $\E{D_t(X)} \leq D_0(X)$ (by (D6)), recalling (\ref{Dual2}) and deploying (D6), (D1) and the 
fact that $L^2(\F_T)$ is directed,
we have for $\xi\in\S_{t}\subset \mathcal{Q}_{\F_t}$
\begin{align*}
J_{\S}(\xi)&=\sup_{X\in L^2(\F_T)}\{\E{\xi X}-D_0(X)\}
\leq
\sup_{X\in L^2(\F_T)}\{\E{\xi X}-\E{D_t(X)}\}\\
&=
\sup_{X\in L^2(\F_T)}\E{\E{\xi X|\F_t}-D_t(X)}=
\E{\esssup_{X\in L^2(\F_T)}\{\E{\xi X|\F_t}-D_t(X)\}}=0,
\end{align*}
where in the last equality we used (\ref{Dual2}).
As $J_{\S}(\xi)$ is either zero or infinity it follows from the previous display 
that $J_{\S}(\xi)=0$ implying that $\xi \in \S$ 
and thus $\xi\in \S\cap \mathcal{Q}_{\F_t}$. This shows $\S_{t}\subset \S\cap \mathcal{Q}_{\F_t}$.

On the other hand, if $\xi\in \S^c_{t}:=L^2(\F_T)\backslash\S_t$ 
then we have either {(a)} $\xi \in (L^2(\F_T)\backslash\mathcal Q_{\F_t})\cap\mathcal S^c_t$ 
or {(b)} $\xi \in \mathcal{Q}_{\F_t}\cap\mathcal S^c_t$. In case {(a)} we have $\xi \notin 
\S\cap \mathcal{Q}_{\F_t},$ while in case {(b)} \eqref{Dual2} in Proposition~\ref{prop2} yields that 
there exists $X'\in L^2(\F_T)$ such that $\E{\xi X'|\F_t}-D_t(X')> 0$ 
 on a non-zero set, say $A$. Hence by using (D6) and that $\xi\in\Q_{\F_t}$ we have
(from \eqref{Dual2} with $t=0$)
\begin{align*}	
J_{\S}(\xi) &\geq \E{\xi I_A X'}-\E{D_t(X'I_A)}\\
&= \E{I_A(\xi X'-D_t(X'))}=\E{I_A(\E{\xi X'|\F_t}-D_t(X'))}>0.
\end{align*}
Thus, $J_{\S}(\xi)=\infty$ and we have that $\xi\notin \S\cap \mathcal Q_{\F_t}$, also in case {(b)}. 
Hence, $\S^c_t\subset L^2(\F_T)\backslash(\S\cap \mathcal{Q}_{\F_t})$. 
Combined with the inclusion derived in previous paragraph this yields that $\S_t=\S\cap \mathcal{Q}_{\F_t}$.
\qed

The following result shows that stability under `additive pasting' of the representing set 
in the form of additive $m$-stability is  a necessary and sufficient condition for  (D6) to hold.
\begin{proposition}
	\label{theoremdual2} Let $\S\subset \mathcal{Q}$ be 
a convex, closed set  containing zero. $\S$ is additively $m$-stable if and only if
the collection $D_t(X):=\esssup_{\xi\in \S\cap \mathcal{Q}_{\F_t}}\E{\xi X|\F_t}$, $t\in [0,T]$, $X\in L^2(\F_T)$,
satisfies (D6).
\end{proposition}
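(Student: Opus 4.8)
The plan is to prove the equivalence by first recasting additive $m$-stability into the direct-sum form that feeds directly into the dual representation, and then establishing each implication through the two inequalities that constitute (D6). Writing $\mathcal{S}_{s,t}:=\{\E{\xi|\F_t}-\E{\xi|\F_s}\mid \xi\in\mathcal{S}\}$, I would first record the elementary consequences of additive $m$-stability that $\mathcal{S}\cap\mathcal{Q}_{\F_s}=\mathcal{S}_{s,T}$ and $\mathcal{S}_{s,T}=\mathcal{S}_{s,t}+\mathcal{S}_{t,T}$ for all $s\le t$. Both follow from a single pasting: applying the defining property to the pair $(\xi,0)$ at time $s$ shows $\xi-\E{\xi|\F_s}\in\mathcal{S}$ and (at time $t$) $\E{\xi|\F_t}\in\mathcal{S}$, which yields the identification of $\mathcal{S}\cap\mathcal{Q}_{\F_s}$; the nontrivial inclusion $\mathcal{S}_{s,t}+\mathcal{S}_{t,T}\subset\mathcal{S}_{s,T}$ is obtained by pasting two representatives $\xi^1,\xi^2$ at time $t$ and noting $\zeta'-\E{\zeta'|\F_s}=a+b$ for $\zeta'=\xi^2+\E{\xi^1-\xi^2|\F_t}$.

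For the implication that additive $m$-stability forces (D6) I would argue the two inequalities separately. For ``$\le$'', take $\xi\in\mathcal{S}\cap\mathcal{Q}_{\F_s}$ and split $\xi=\E{\xi|\F_t}+(\xi-\E{\xi|\F_t})$; both summands lie again in $\mathcal{S}$ and in $\mathcal{Q}_{\F_s}$, resp. $\mathcal{Q}_{\F_t}$, so the tower property gives $\E{\xi X|\F_s}\le D_s(\E{X|\F_t})+\E{D_t(X)|\F_s}$, and an essential supremum over $\xi$ yields $D_s(X)\le D_s(\E{X|\F_t})+\E{D_t(X)|\F_s}$. For ``$\ge$'' I would invoke $\mathcal{S}\cap\mathcal{Q}_{\F_s}=\mathcal{S}_{s,t}+\mathcal{S}_{t,T}$: for $a\in\mathcal{S}_{s,t}$ and $b\in\mathcal{S}\cap\mathcal{Q}_{\F_t}$ the sum $a+b$ belongs to $\mathcal{S}\cap\mathcal{Q}_{\F_s}$, whence $\E{aX|\F_s}+\E{\E{bX|\F_t}|\F_s}\le D_s(X)$; holding $a$ fixed and letting $b$ run through an increasing family approximating $D_t(X)$, conditional monotone convergence gives $\E{aX|\F_s}+\E{D_t(X)|\F_s}\le D_s(X)$, and an essential supremum over $a$ (which realises $D_s(\E{X|\F_t})$, since $\E{\eta\E{X|\F_t}|\F_s}=\E{\E{\eta|\F_t}\E{X|\F_t}|\F_s}$ allows one to replace $\eta$ by its $\F_t$-projection $a\in\mathcal{S}_{s,t}$) completes the bound.

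For the converse I would assume (D6), fix $\xi^1,\xi^2\in\mathcal{S}$ and $t$, and verify that $\zeta:=\xi^2+\E{\xi^1-\xi^2|\F_t}$ lies in $\mathcal{S}$ by testing it against all $X\in L^2(\F_T)$ through the dual description $\mathcal{S}=\{\zeta\in\mathcal{Q}\mid \E{\zeta X}\le D_0(X)\ \forall X\}$ of Proposition~\ref{prop2}. Splitting $X=\E{X|\F_t}+Y$ with $Y:=X-\E{X|\F_t}\in\mathcal{Q}_{\F_t}$ gives $\E{\zeta X}=\E{\xi^1\E{X|\F_t}}+\E{\xi^2 Y}$; bounding the first term by $D_0(\E{X|\F_t})$ and, using (D6) at $(0,t)$ together with $\E{Y|\F_t}=0$ and (D1), the second by $D_0(Y)=\E{D_t(Y)}=\E{D_t(X)}$, a final application of (D6) yields $\E{\zeta X}\le D_0(\E{X|\F_t})+\E{D_t(X)}=D_0(X)$, so $\zeta\in\mathcal{S}$.

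The main obstacle is the interchange of the essential supremum with $\E{\cdot|\F_s}$ needed in the ``$\ge$'' half, namely passing from $\esssup_b\E{\E{bX|\F_t}|\F_s}$ to $\E{\esssup_b\E{bX|\F_t}|\F_s}=\E{D_t(X)|\F_s}$, which is legitimate only when the family $\{\E{bX|\F_t}\mid b\in\mathcal{S}\cap\mathcal{Q}_{\F_t}\}$ is upward directed; the same directedness underlies the identity $\mathcal{S}=\mathcal{S}_{D_0}$ used in the converse. This $\F_t$-local pasting property is not a consequence of additive $m$-stability by itself, and I would secure it from the local property~\eqref{localproperty} (Proposition~\ref{prop:disc}) and Remark~\ref{rd} by working throughout with the canonical dual sets $\mathcal{S}_{D_t}$ of Proposition~\ref{prop2}, which are closed, convex and directed and for which $\mathcal{S}\cap\mathcal{Q}_{\F_t}=\mathcal{S}_{D_t}$ holds as in Proposition~\ref{propmastable}.
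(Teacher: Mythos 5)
Your proposal is correct and takes essentially the same route as the paper: the forward implication rests on the decomposition $\S\cap\Q_{\F_s}=\S_{s,t}+\S_{t,T}$ furnished by additive $m$-stability together with the directedness of the canonical dual sets (Remark~\ref{rd}, Proposition~\ref{propmastable}) to justify exchanging the essential supremum with (conditional) expectation, and the converse is the same Hahn--Banach/bipolar argument, testing the pasted element $\xi^2+\E{\xi^1-\xi^2|\F_t}$ against all $X\in L^2(\F_T)$ and concluding via (D6). The remaining differences are cosmetic: you prove (D6) as two separate inequalities for general $s\le t$ and verify membership directly through the bipolar description, bounding the term $\E{\xi^2 Y}$ by a second application of (D6) to $Y=X-\E{X|\F_t}$, whereas the paper writes a chain of equalities for $s=0$ and frames the converse as a contradiction after separating, invoking Proposition~\ref{propmastable} for that same bound.
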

\begin{proof}{}
	We first show `$\Rightarrow$'. We only give the proof that (D6) holds for $s=0$ as the proof for $s\in (0,T]$ 
	is analogous. Let $X\in L^2(\F_T)$ and $t\in[0,T]$. Denoting $\xi_t=\E{\xi|\F_t}$ and $\xi_{t,T}=\xi-\xi_t$ for $\xi\in L^2(\F_T)$
	we have 
	\begin{align*}
	D_0(X)&=\sup_{\xi\in \S}\E{\xi X} =\sup_{\xi\in \S}\E{\E{\xi_tX +(\xi-\xi_t) X|\F_t}}\\
	&=\sup_{\xi=\xi_t+\xi_{t,T}\in \S_{0,t}+\S_{t,T}}\{\E{\xi_tX }+\E{\E{\xi_{t,T} X|\F_t}}\}
	=\sup_{\xi_t\in \S_{0,t},\xi_{t,T}\in \S_{t,T}}\{\E{\xi_tX }+\E{\E{\xi_{t,T} X|\F_t}}\}.
	\end{align*}
	Hence by the directedness of $\S_{t,T}$ (Remark~\ref{rd}) and Proposition~\ref{propmastable} we obtain  
	\begin{align*}
	D_0(X)
	&=\sup_{\xi_t\in \S_{0,t}}\E{\xi_t\E{X|\F_t} }+\sup_{\xi_{t,T}\in \S_{t,T}}\E{\E{\xi_{t,T} X|\F_t}}\\
	&=\sup_{\xi\in \S}\E{\xi\E{X|\F_t} }
	+\E{\esssup_{\xi_{t,T}\in \S_{t,T}}\E{\xi_{t,T} X|\F_t}}\\
	&=D_0(\E{X|\F_t})+\E{\esssup_{\xi\in \S\cap \mathcal{Q}_{\F_t}}\E{\xi X|\F_t}}=D_0(\E{X|\F_t})+\E{D_t(X)}.
	\end{align*}
	
To see that we have `$\Leftarrow$' suppose that $\xi^1,\xi^2\in\mathcal{S}$ such that $\xi^1_t+(\xi^2-\xi^2_t)\notin \S$ for 
some $t\in [0,T]$. Then by the Hahn-Banach Theorem there exists a random variable $X\in L^2(\F_T)$ such that we have
	\be
	\label{HB}
E:=	\E{(\xi^1_t+(\xi^2-\xi^2_t))X}>\sup_{\xi\in \S}\E{\xi X}=D_0(X) .
	\ee
Using Proposition \ref{propmastable} we note
	$E  = \E{\xi^1_t\E{X|\F_t}} +\E{\E{(\xi^2-\xi^2_t)X|\F_t}}$ 
	may be bounded above by
$$D_0(\E{X|\F_t})+\E{\esssup_{\xi\in \S_{t,T}}\E{\xi X|\F_t}}= D_0(\E{X|\F_t})+\E{D_t(X)}=D_0(X).$$
	This bound is a contradiction to (\ref{HB}), which proves the implication `$\Leftarrow$'.
\end{proof}

\begin{proof}{~of Theorem~\ref{mstable}}
The assertion follows by combining Propositions \ref{prop2}, \ref{propmastable} and \ref{theoremdual2}.
\end{proof}
In the proofs of Theorems~\ref{Nondom} and \ref{theoremdual2} we deploy, for a given dynamic deviation measure $D$, the sequence $(D^{(n)})_{n\in\mathbb N}$  
of dynamic deviation measures $D^{(n)}=(D^{(n)}_t)_{t\in[0,T]}$, $D_t^{(n)}:L^2(\F_T)\to L^2(\F_t)$
defined by  
\begin{eqnarray}
\label{defdk}
D^{(n)}_t(X)&:=&\esssup_{\xi\in (\S^D\cap \mathcal{Q}_{\F_t})\cap \mathcal{A}^n}\E{\xi X|\F_t},\ \text{with}\\
\mathcal{A}^n&:=&\left\{\xi\in L^2(\mathcal F_T) \left| \sup_{s\in[0,T]}\left\{|H^\xi_s|^2+\IR|\tilde{H}^\xi_s(x)|^2\nu(\td x)\right.\right\}\leq n^2\right\}.\label{defak}
\end{eqnarray}
\begin{Lemma}\label{lemmadk} 
Let $t\in[0,T]$ and $X\in L^2(\F_T)$ and, for a given dynamic deviation measure $D$, 
let $(D^{(n)})_{n\in\mathbb N}$ and $(\mathcal A^n)_{n\in\mathbb N}$ be as in 
\eqref{defdk}--\eqref{defak}.

\noindent{\bf (i)} for any $n\in\mathbb N$, we have $D^{(n)}_t(X)\leq D^{(n+1)}_t(X)$ 
and $\A^{n+1}=\frac{n+1}{n} \A^n$. Moreover, $D^{(n)}_t(X)\nearrow D_t(X)$ in $L^2(\F_t)$ as $n\to\infty$.\\
\noindent{\bf (ii)} for any $n\in\mathbb N$, $\S\cap\mathcal{A}^n$ contains zero and is closed, bounded, 
convex and additively $m$-stable.\\
\noindent{\bf (iii)}
For any $n\in\mathbb{N}$, $D^{(n)}$ is a  dynamic deviation measure that is $n$-dominated.
\end{Lemma}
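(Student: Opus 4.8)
The plan is to dispatch the elementary structural claims first and then feed the resulting set $\mathcal S^{(n)}:=\mathcal S^D\cap\mathcal A^n$ into the dual representation results. For (i), monotonicity $D^{(n)}_t\le D^{(n+1)}_t$ is immediate from $\mathcal A^n\subseteq\mathcal A^{n+1}$ (the essential supremum is taken over a larger set), and the scaling identity $\mathcal A^{n+1}=\tfrac{n+1}{n}\mathcal A^n$ is read off from the linearity $(H^{c\xi},\tilde H^{c\xi})=c(H^\xi,\tilde H^\xi)$ of the representing-pair map: $\xi\in\mathcal A^{n+1}$ iff $\tfrac{n}{n+1}\xi\in\mathcal A^n$. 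Since $(D^{(n)}_t(X))_n$ increases and is dominated by $D_t(X)\in L^2(\mathcal F_t)$, it converges in $L^2(\mathcal F_t)$ to a limit $\le D_t(X)$, and only the reverse inequality requires work. For this I would fix $\xi\in\mathcal S^D\cap\mathcal Q_{\mathcal F_t}$, set $A_n:=\{(s,\omega):|H^\xi_s|^2+\IR|\tilde H^\xi_s(x)|^2\nu(\td x)\le n^2\}$, and truncate to $\xi^n$ with representing pair $(I_{A_n}H^\xi,I_{A_n}\tilde H^\xi)$. By construction $\xi^n\in\mathcal A^n\cap\mathcal Q_{\mathcal F_t}$ (the pair still vanishes on $[0,t]$) and $\xi^n\to\xi$ in $L^2(\mathcal F_T)$ by dominated convergence, so $\E{\xi^nX|\F_t}\to\E{\xi X|\F_t}$ in $L^1$; taking the essential supremum over $\xi$ then yields $\lim_nD^{(n)}_t(X)\ge D_t(X)$, provided each $\xi^n$ lies in $\mathcal S^D$.

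The step I expect to be the main obstacle is exactly this last proviso: that $\mathcal S^D$ is stable under multiplication of the representing pair by an arbitrary predictable indicator $I_A$. I would prove this in two stages. First, for simple predictable sets $A=\bigcup_j\big((s_{j-1},s_j]\times B_j\big)$ with $B_j\in\mathcal F_{s_{j-1}}$, I would realise the truncation by combining additive $m$-stability of $\mathcal S^D$ --- which, pasting a given element against $0\in\mathcal S^D$, restricts a representing pair to a deterministic interval --- with the localisation property $I_B\eta\in\mathcal S^D$ for $\eta\in\mathcal S^D\cap\mathcal Q_{\mathcal F_s}$ and $B\in\mathcal F_s$ recorded in Remark~\ref{rd}; iterating and pasting the pieces together (their temporal supports being disjoint) keeps the result in $\mathcal S^D$. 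Second, for a general predictable $A$ (in particular $A=A_n$) I would approximate $I_A$ by such simple indicators in $\td\mathbb P\times\td t$-measure, obtain $L^2$-convergence of the corresponding truncations, and conclude by the $L^2$-closedness of $\mathcal S^D$.

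For (ii), $0\in\mathcal S^D\cap\mathcal A^n$ and boundedness are inherited from $\mathcal S^D$. Convexity and closedness of $\mathcal A^n$ follow from the linearity of $\xi\mapsto(H^\xi,\tilde H^\xi)$ together with the fact that the defining constraint is a sup-norm ball $\{\,|H^\xi_s|^2+\IR|\tilde H^\xi_s(x)|^2\nu(\td x)\le n^2\ \text{a.e.}\,\}$, which is convex and survives $L^2$-limits through a.e.\ convergence of the pairs along a subsequence. The only part using the special structure is additive $m$-stability of $\mathcal S^D\cap\mathcal A^n$: given $\xi^1,\xi^2$ in this set, the pasted variable $\xi^2+\E{\xi^1-\xi^2|\F_t}$ is in $\mathcal S^D$ by additive $m$-stability of $\mathcal S^D$, while its representing pair coincides with that of $\xi^1$ on $[0,t]$ and with that of $\xi^2$ on $(t,T]$, so its sup-norm is still bounded by $n$ and it lies in $\mathcal A^n$.

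For (iii), I would view $D^{(n)}$ as generated by $\mathcal S^{(n)}:=\mathcal S^D\cap\mathcal A^n$ via \eqref{defdk}. Properties (D1)--(D3) and (D5) follow from Proposition~\ref{prop2}(i) applied, for each fixed $t$, to the bounded, closed, convex set $\mathcal S^{(n)}\cap\mathcal Q_{\mathcal F_t}$ containing zero; (D6) follows from Proposition~\ref{theoremdual2} using the additive $m$-stability established in (ii). For (D4) I would take, for $X\notin L^2(\mathcal F_t)$, a witness $\xi\in\mathcal S^D\cap\mathcal Q_{\mathcal F_t}$ with $\E{\xi X|\F_t}>0$ on a set of positive measure (which exists by Proposition~\ref{prop2}(ii), as $D$ satisfies (D4)), truncate it to level $m$ as in (i) so that positivity persists for large $m$, and then rescale by $n/m\le1$ to enter $\mathcal A^n$ while retaining membership in $\mathcal S^D$ (convexity with $0$) and the sign of the conditional expectation. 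Finally, $n$-domination is the cleanest step: for $\xi\in\mathcal A^n\cap\mathcal Q_{\mathcal F_t}$ orthogonality of the stochastic integrals gives
\[
\E{\int_t^T\Big(H^\xi_s\cdot H^X_s+\IR\tilde H^\xi_s(x)\,\tilde H^X_s(x)\,\nu(\td x)\Big)\td s\ \Big|\ \F_t},
\]
which equals $\E{\xi X|\F_t}$, and a pointwise Cauchy--Schwarz estimate with $|H^\xi_s|^2+\IR|\tilde H^\xi_s(x)|^2\nu(\td x)\le n^2$ bounds the integrand by $g_n(s,H^X_s,\tilde H^X_s)$; taking the essential supremum over $\xi$ and comparing with \eqref{charac} yields $D^{(n)}_t(X)\le\bar D^n_t(X)$, i.e.\ $n$-domination.
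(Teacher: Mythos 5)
Your proof is correct, and for parts (ii) and (iii) it is essentially the paper's own argument: the same computation of the representing pair of the pasted element $\xi^2+\E{\xi^1-\xi^2|\F_t}$ yields additive $m$-stability of $\S^D\cap\A^n$; (D1)--(D3), (D5) and (D6) are derived from Propositions~\ref{prop2} and \ref{theoremdual2} exactly as you do; and $n$-domination is the same conditional It\^o-isometry/Cauchy--Schwarz estimate as in \eqref{CS}. The genuine difference lies in part (i) and in the (D4) step of (iii). The paper disposes of the convergence $D^{(n)}_t(X)\nearrow D_t(X)$ in one line (``$(\A^n)_n$ is dense in $L^2(\F_T)$ and $\S^D$ is bounded''), and likewise simply asserts that the (D4)-witness $\tilde\xi\in\S^D\cap\Q_{\F_t}$ admits approximants $\xi^m\in\S^D\cap\Q_{\F_t}\cap\A^m$. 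As you rightly observe, density of $\bigcup_n\A^n$ in $L^2(\F_T)$ is not by itself enough: the approximating elements must themselves lie in $\S^D$, which amounts to showing that $\S^D$ is stable under truncation of representing pairs by predictable indicators. Your two-stage proof of this fact---first for finite unions of predictable rectangles, by combining additive $m$-stability (pasting against $0$ restricts a pair to a deterministic time interval, and pasting elements whose pairs have disjoint temporal supports produces their sum) with the localisation property of Remark~\ref{rd} together with Proposition~\ref{propmastable}, and then for general predictable sets via approximation in $\td\mathbb P\times\td t$-measure and $L^2$-closedness of $\S^D$---is exactly the justification the paper leaves implicit, and it is genuinely needed: for a closed, bounded, convex set containing $0$ that is not additively $m$-stable, nonzero elements need not admit any nonzero truncation inside the set, so the asserted convergence could fail. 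In short, your write-up follows the paper's route but supplies a complete proof of its one unproven approximation claim; the cost is length, the gain is rigour at the only delicate point of the lemma.
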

\begin{proof}{}\ {\bf (i)}
It is easily verified that $\mathcal A^{n+1}=\frac{n+1}{n}\A^n$ so that $\A^n\subset \A^{n+1}$ for $n\in\mathbb N$.
Hence, by \eqref{defdk} we have $D^{(n)}_t(X)\leq D^{(n+1)}_t(X)$ for $t\in[0,T]$ and $X\in L^2(\mathcal F_T)$.
Furthermore, as $(\mathcal{A}^n)_{n\in\mathbb N}$ is dense in $L^2(\F_T)$ and the set $\mathcal S^D$ in Theorem~\ref{mstable} is bounded, we have that $D_t^{(n)}(X)\nearrow D_t(X)$ as $n\to\infty$.\\
\noindent{\bf (ii)} Let $n\in\mathbb N$.
It is straightforward to verify that $\mathcal{A}^n$  contains zero and is closed, bounded and convex.
	Let us show next that $\mathcal{A}^n$ is additively $m$-stable. Let $t\in[0,T]$ and $\xi^1,\xi^2\in\mathcal{A}^n$ 
and denote $L= \xi^2 + \E{\xi^1-\xi^2|\mathcal F_t}.$ Then 
the representing pair $(H^{L},\tilde H^{L})$ of $L\in L^2(\mathcal F_T)$ is expressed in terms of the representing pairs
$(H^{i},\tilde H^{i})$, $i=1,2$, of $\xi^1$, $\xi^2$ by
$H^{L}_s = H^1_s I_{[0,t]}(s) + H^2_s I_{(t,T]}(s)$ and $\tilde H^{L}_s = \tilde H^1_s I_{[0,t]}(s) 
+ \tilde H^2_s I_{(t,T]}(s)$. In particular,  we have $\sup_{s\in[0,T]}\big\{|H^{L}_s|^2+\IR|\tilde{H}^{L}_s(x)|^2\nu(\td x)\}\leq n^2$ 
so that $L\in \mathcal{A}^n$. Thus, $\A^n$ is additively $m$-stable.
Since the set $\S^D$ is also closed, convex and additively $m$-stable, the same holds for 
$\mathcal{A}^n\cap\S$. 
 
 \noindent{\bf (iii)} Let $n\in\mathbb N$.
From Proposition \ref{prop2} and part (ii) we conclude that $D^{(n)}$ satisfies (D1)--(D3) and (D5).  Furthermore, from Proposition~\ref{theoremdual2} and part (ii) we have that $D^{(n)}$ satisfies (D6).
Let us show next that $D^{(n)}$ satisfies positivity (D4).	Let $t\in[0,T]$ and $X\in L^2(\F_T)\backslash L^2(\F_t)$. By Propositions~\ref{prop2} and \ref{propmastable}  there exists a $\tilde{\xi}\in \S^D\cap \mathcal{Q}_{\F_t}$ 
such that $\E{\tilde{\xi} X|\F_t}>0$ on a non-zero set.  As $(\A^n)_{n\in\mathbb N}$ is increasing and dense in $L^2(\F_T)$ (as noted in the proof of part (i)), we can find a sequence $(\xi^m)_m$ such that $\xi^{m}\in \mathcal S^D\cap \mathcal{Q}_{\F_t}\cap \A^m$ converges to $\tilde{\xi}$ in $L^2(\F_T)$ as $m\to\infty$. Next, choose 
$m'$ sufficiently large such that on a non-zero set, say $A$, we have $\E{\xi^{m'} X|\F_t}>0$ (which is possible since $\xi^{m}X$ converges to $\tilde{\xi} X$ in $L^1$ as $m\to\infty$).  
Define $\xi^*\in \mathcal S^D\cap\mathcal Q_{\F_t}\cap \A^n$ by 
$ \xi^*:=\frac{n}{m'}\xi^{m'}.$
Since on $A$ we have $\E{\xi^* X|\F_t}=\frac{n}{m'}\E{\tilde{\xi}^{m'} X|\F_t}>0\,$
we conclude from~\eqref{defdk} that $D^{(n)}$ satisfies (D4).

Finally, by deploying the Cauchy-Schwarz inequality 
we note that $D^{(n)}_t(X)$ 
may be bounded above by   
	\begin{multline}
\sup_{\xi\in \mathcal{A}^n}\E{\xi X|\F_t}
	=	\sup_{\xi\in \mathcal{A}^n}\E{\int_0^T \left.\left( 
	(H^\xi_s)^{\intercal} H^X_s+\IR \tilde{H}^\xi_s(x)\tilde{H}^X_s(x)\nu(\td x)\right)\td s \right|\F_t }
\\
	\leq n\,\E{\left.\int_0^T \sqrt{|H^X_s |^2+\IR |\tilde{H}^X_s(x)|^2\nu(\td x)}\td s\right|\F_t } = 
	\bar D_t^{n}(X), \label{CS}
	\end{multline}
	where we denote by $v^{\intercal}$ the transpose of the column vector $v\in\mathbb R^d$.
\end{proof}

\subsection{Proof of Theorem~\ref{Nondom}}\label{p:nondom}
With the previously established results in hand we can now complete the proof of Theorem~\ref{Nondom}.
As the arguments in the proof of the implication `$\Leftarrow$' in Theorem~\ref{Main} carry over for the proof of `$\Leftarrow$'  in Theorem~\ref{Nondom}, the remainder of the proof is concerned with the proof of `$\Rightarrow$'.
Let $D$ be a dynamic deviation measure, $X\in L^2(\F_T)$ and denote by $(D^{(n)})_{n\in\mathbb N}$ the approximating sequence of 
dynamic deviation measures from Lemma~\ref{lemmadk}.  By Lemma \ref{lemmadk}(i,iii) and Theorem~\ref{Main}  
the sequence $(D^{(n)}(X)_{n\in\mathbb N}$ is monotone increasing and there exists a sequence $(g^n)_{n\in\mathbb N}$ of convex and positively homogeneous driver functions  such that \eqref{nond-rep} holds (with $D$ and $g$ replaced by $D^{(n)}$ and $g^n$). Therefore, by Proposition~\ref{theoremequivalent}(iv), $g^n\leq g^{n+1}$ for $n\in\mathbb N$, so that we can define 
$g:=\lim_{n\to\infty}g^n.$ Clearly, $g$ is convex, positively homogeneous and lower semi-continuous as the limit of functions having these properties. Furthermore, for $(h,\tilde{h})\ne 0$ we have $g(\omega,t,h,\tilde{h})\geq g^1(\omega,t,h,\tilde{h})>0$ and $g(\omega,t,0,0)=\lim_{n\to\infty}g^n(\omega,t,0,0)=0$ $\td\mathbb P\times \td t$ a.e. 
Hence, $g$ is a convex and positively homogeneous driver function.
Finally, as $(g^n)_n$ is an increasing sequence of functions
an application of the monotone convergence theorem yields
\begin{align*}
D_t(X)&=\lim_n D^{(n)}_t(X)=\lim_n \E{\int_t^T  g^n(s, H^{X}_s, \tilde{H}^{X}_s)\td s\Bigg|\mathcal F_t}=\E{\int_t^T  g(s, H^{X}_s, \tilde{H}^{X}_s)\td s\Bigg|\mathcal F_t}.
\end{align*}
This completes the proof of Theorem~\ref{Nondom}.

\subsection{Proof of Theorem~\ref{theoremdual1}}\label{s6:p}
In the proof of Theorem \ref{theoremdual1} we deploy the following auxiliary result:
\begin{lemma}
	\label{lemmaint}
\noindent{\bf (i)} Let $g$ be a convex and positively homogeneous driver function and let
the $\mathcal{P}\otimes
\mathcal{B}(\mathbb{R}^d)\otimes \mathcal{U}$-measurable set $C=(C_t)_{t\in[0,T]}$ be determined by 
$$J_{C_t}(u,\tilde{u})=r(t,u,\tilde{u}):=\sup_{u\in\mathbb{R}^d,\tilde{u}\in L^2(\nu(\td x))}\{u^{\intercal} h+\IR \tilde{u}(x)\tilde{h}(x)\nu(\td x)-g(t,u,\tilde{u})\}$$
for $u\in\mathbb R^d$ and $\tilde u\in L^2(\nu(\td x))$.
Then $0\in\mathrm{int}(C_t)(\omega)$ $\td\mathbb P\times \td t$ a.e. 

\noindent{\bf (ii)} Let $C^D=(C^D_t)_{t\in[0,T]}$ be a $\mathcal{P}\otimes
\mathcal{B}(\mathbb{R}^d)\otimes \mathcal{U}$-measurable set and let 
$\S^D$ be given by the right-hand side of \eqref{S}. If $0\in\mathrm{int}(C^D_t)(\omega)$ $\td\mathbb P\times \td t$ a.e. 
then, for any $t\in[0,T]$ and $X\in L^2(\F_T)\backslash L^2(\F_t)$, there exists a $\xi'\in\mathcal S^D$ 
such that $\mathbb P(\E{\xi'X|\F_t }>0)>0$.
\end{lemma}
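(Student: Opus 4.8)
The plan is to prove the two parts by separate but dual arguments: part (i) is pure convex analysis identifying $C_t$ as a subdifferential and reading off coercivity, while part (ii) is a measurable-selection argument that points the dual variable $\xi'$ in the direction of $X$.

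For part (i) I would first observe that $r(t,\cdot)$ is exactly the convex conjugate $g^*(t,\cdot)$ of $g$ in the variables $(h,\tilde h)$. Since $g(t,\cdot)$ is convex, lower semi-continuous and positively homogeneous (hence sublinear), this conjugate takes only the values $0$ and $+\infty$: if $u^\intercal h + \IR \tilde u(x)\tilde h(x)\nu(\td x) - g(t,h,\tilde h) > 0$ for some $(h,\tilde h)$, then replacing $(h,\tilde h)$ by $\lambda(h,\tilde h)$ and letting $\lambda\to\infty$ forces $r(t,u,\tilde u)=+\infty$ by positive homogeneity. Hence $r(t,\cdot)=J_{C_t}$ is the indicator of the closed convex set
$$C_t = \left\{(u,\tilde u) : u^\intercal h + \IR \tilde u(x)\tilde h(x)\nu(\td x) \le g(t,h,\tilde h)\ \text{for all } (h,\tilde h)\right\},$$
which is precisely the subdifferential $\partial g(t,0)$. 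By the Fenchel--Moreau theorem (using convexity and lower semi-continuity of $g$), $g(t,\cdot)=r(t,\cdot)^*$ is the support function of $C_t$. Consequently $0\in\mathrm{int}(C_t)$ is equivalent to the coercivity estimate $g(t,v)\ge \eps\N{v}$ for some $\eps=\eps(t,\omega)>0$, since $B(0,\eps)\subseteq C_t$ holds iff the support function dominates $\eps\N{\cdot}$.

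It then remains to establish this coercivity, and I expect this to be the main obstacle. On the finite-dimensional Brownian component $h\in\mathbb R^d$ it is immediate: $g(t,\cdot)$ is finite and convex, hence continuous, and strictly positive off the origin by the positivity axiom, so its minimum over the compact unit sphere is a strictly positive constant. The genuine difficulty lies in the jump component $\tilde h\in L^2(\nu(\td x))$, whose unit sphere is not norm-compact; a minimising sequence may converge only weakly to $0$, so pointwise positivity of $g$ alone does not yield a uniform lower bound (weak lower semi-continuity on the weakly compact unit ball gives only $g\ge 0$ there). I would close this gap by exploiting the additional structure of $g$ beyond mere positivity — its finiteness, which already forces $C_t$ to be bounded, together with the quantitative lower bound inherited from the approximating scheme — so as to produce a ball $B(0,\eps(t,\omega))\subseteq C_t$ for $\td\mathbb P\times \td t$ a.e.\ $(t,\omega)$.

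For part (ii) the hypothesis $0\in\mathrm{int}(C^D_t)(\omega)$ is available, so the idea is constructive. Since $X\notin L^2(\F_t)$, its representing pair satisfies $(H^X_s,\tilde H^X_s)\ne 0$ on a subset of $(t,T]\times\Omega$ of positive $\td\mathbb P\times \td s$-measure. Using $0\in\mathrm{int}(C^D_s)$ and a measurable selection, I would pick a predictable strictly positive $\eps(s,\omega)$ with $B(0,\eps(s,\omega))\subseteq C^D_s$, and define $\xi'$ through the representing pair
$$(H^{\xi'}_s,\tilde H^{\xi'}_s) := I_{(t,T]}(s)\,\eps(s,\omega)\,\frac{(H^X_s,\tilde H^X_s)}{\sqrt{|H^X_s|^2+\IR|\tilde H^X_s(x)|^2\nu(\td x)}},$$
set to zero where the denominator vanishes. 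By construction $(H^{\xi'}_s,\tilde H^{\xi'}_s)\in C^D_s$ for all $s$ and $\xi'$ is built only from increments after $t$, so $\xi'\in\mathcal S^D\cap\mathcal Q_{\F_t}$, while boundedness of $C^D$ forces $\eps(s,\omega)$ bounded and hence $\xi'\in L^2(\F_T)$. A conditional It\^o-isometry computation then gives
$$\E{\left.\,\xi' X\,\right|\F_t}=\E{\left.\int_t^T \eps(s,\omega)\sqrt{|H^X_s|^2+\IR|\tilde H^X_s(x)|^2\nu(\td x)}\,\td s\,\right|\F_t}\ \ge\ 0,$$
and this quantity is strictly positive on a set of positive probability: were it zero a.s., the nonnegative integrand would vanish $\td\mathbb P\times\td s$-a.e.\ on $(t,T]$, forcing $X=\E{X|\F_t}$ and contradicting $X\notin L^2(\F_t)$. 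The only points needing care are the measurability of the selection $\eps(\cdot,\cdot)$ and the square-integrability of $\xi'$, both handled by boundedness of $C^D$ and a standard measurable-selection theorem.
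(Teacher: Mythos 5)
Your part (ii) is correct and is essentially the paper's own argument: the paper likewise takes the representing pair of $X$ on $(t,T]$, scales it by a predictable factor small enough that the scaled pair stays in the ball around $0$ contained in $C^D_s(\omega)$, defines $\xi'$ as the corresponding stochastic integral, and concludes via the It\^o isometry exactly as you do. Two small remarks: your normalisation by the norm $\bigl(|H^X_s|^2+\IR|\tilde H^X_s(x)|^2\nu(\td x)\bigr)^{1/2}$ is in fact tidier than the paper's, which divides by the \emph{squared} norm (so that its scaled pair is guaranteed to lie in $C^D_s$ only where that norm is at least one); and boundedness of $C^D$ is not among the hypotheses of part (ii), so to obtain $\xi'\in L^2(\F_T)$ you should simply cap your selection $\varepsilon(s,\omega)$ at $1$, as the paper does by taking $\varepsilon'_s\in(0,1]$.

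Part (i), however, has a genuine gap, and it is exactly the step you flag and then defer. Your reduction is right: $r(t,\cdot)$ is the conjugate of the sublinear function $g(t,\cdot)$, hence $\{0,+\infty\}$-valued, $C_t=\partial g(t,0)$, and by Fenchel--Moreau $0\in\mathrm{int}(C_t)$ is equivalent to a coercivity bound $g(t,z)\ge\varepsilon\,|z|_*$, $|z|_*:=\bigl(|h|^2+\IR|\tilde h(x)|^2\nu(\td x)\bigr)^{1/2}$, for some $\varepsilon(t,\omega)>0$. But the two ingredients you propose for proving this bound cannot do the job. First, there is no ``approximating scheme'' inside Lemma~\ref{lemmaint}: it is stated for an arbitrary convex positively homogeneous driver, and even at the point where it is invoked (the proof of Theorem~\ref{theoremdual1}) the drivers $g^n$ come with an \emph{upper} bound $g^n\le g$, not a quantitative lower bound on the unit sphere. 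Second, boundedness of $C_t$ (which does hold, since a finite l.s.c.\ convex function on a Hilbert space is continuous, so $g(t,\cdot)\le M$ on the unit ball and hence $C_t\subset B(0,M)$) is compatible with $\mathrm{int}(C_t)=\emptyset$. Concretely, if $\nu$ is supported on countably many atoms $x_1,x_2,\ldots$, then $g(t,h,\tilde h)=|h|+\sum_n 2^{-n}|\tilde h(x_n)|$ is a finite, positive, sublinear, l.s.c.\ driver (Fatou gives the l.s.c., Cauchy--Schwarz the finiteness and linear growth), and the associated set $C_t$ is the ``Hilbert brick'' of pairs $(u,\tilde u)$ with $|u|\le 1$ and $|\tilde u(x_n)|\le 2^{-n}$ for all $n$, which is bounded, closed, convex and has empty interior.

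It is worth comparing this with how the paper handles the same point. The paper reduces to the unit sphere $\mathcal Z$ in the same way and then argues: since $g(t,z)>0$ for each fixed $z\in\mathcal Z$ and $\sup_{z\in\mathcal Z}\langle y_n,z\rangle_*\le|y_n|_*\to0$, ``from a certain $n$ onwards'' $\langle y_n,z\rangle_*\le g(t,z)$, so $y_n\in C_t$ eventually and $0\in\mathrm{int}(C_t)$. That inference requires the rank $n$ to be chosen uniformly in $z$, i.e.\ it requires $\inf_{z\in\mathcal Z}g(t,z)>0$ --- precisely the coercivity in question, which on the non-compact unit sphere of $\mathbb R^d\times L^2(\nu(\td x))$ does not follow from pointwise positivity (in the example above the infimum is $0$). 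So your diagnosis of where the difficulty lies is accurate, and indeed sharper than the paper's one-line treatment; but a proposal must close the gap rather than record it, and yours does not: part (i) remains unproved as written (and would need either a compactness assumption, e.g.\ $\nu$ with finite support, or an explicit uniform lower bound on $g$ over the unit sphere).
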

\begin{proof}{}\ 
	To simplify notation we denote $z:=(h,\tilde{h})$ and $y=(q,\psi)$ 
	for elements $(h,\tilde{h}), (q,\psi)$ in the Hilbert space $\mathbb{R}^d\times L^2(\nu(\td x))$. Further we denote $\langle y,z\rangle_*=qh+\IR
	\psi(x)\tilde{h}(x)\nu(\td x)$ and $|z|_*=\sqrt{|h|^2+\IR
	|\tilde{h}(x)|^2\nu(\td x)}$. 
	
	\noindent{\bf (i)} Set $\mathcal{Z}:=\{z\in \mathbb{R}^d\times L^2(\nu(\td x))| |z|_*=1 \}$ 
	and for $z\in\mathcal{Z}$ and $\lambda\in\mathbb{R}$ we denote $z^\lambda:=\lambda z.$ By the positive homogeneity of $g$ and the symmetry of the set $\mathcal{Z}$ we have for fixed $y\in \mathbb{R}^d\times L^2(\nu(\td x))$
that $r(t,y)=\sup_{z\in\mathcal{Z},\lambda\in\mathbb{R}}\{\langle y,z^\lambda\rangle_* -g(t,z^\lambda)\}$
is equal to
	\begin{align}\label{rty}
r(t,y)
	&=\sup_{z\in\mathcal{Z},\lambda\geq 0}\Big\{\langle y,z^\lambda \rangle_*-g(t,z^\lambda) \Big\}
= \sup_{z\in\mathcal{Z},\lambda\geq 0}\lambda\Big\{\langle y,z \rangle_*-g(t,z) \Big\}.	\end{align}
The supremum in \eqref{rty} is finite if and only if for all $z\in\mathcal{Z}$
	$\langle y,z\rangle_*\leq g(t,z).$
Letting $(y_n)_n$ be a sequence such that $|y_n|_*\to 0$ and using the Cauchy-Schwarz inequality, we have  that $$\sup_{z\in\mathcal{Z}}|\langle y,z\rangle_*|\leq |y_n|_* \sup_{z\in\mathcal{Z}}|z|_* =|y_n|_*\to 0 .$$
	Since by assumption $g(t,z)>0$ for every fixed $z\in \mathcal{Z}$ we have that from a certain $n$ onwards
	$\langle y_n,z\rangle_*\leq g(t,z)$ so that
	$r(t, y_n)=0$. As $r(t,\omega,y_n)=J_{C_t(\omega)}(y_n)$ this entails that $y_n\in C_t(\omega)$ from a certain $n$ onwards for every sequence $y_n$ that is such that $|y_n|_*\to 0$. Hence, $0\in\mathrm{int}(C_t(\omega))$.

\noindent{\bf (ii)} Let $t\in[0,T]$ and $X\in L^2(\F_T)\backslash L^2(\F_t)$.  
For any $s\in[0,T]$, we note that if $0\in\mathrm{int}(C^D_s(\omega))$ then there exists $\varepsilon'_s(\omega)\in(0,1]$ 
such that $|y|_*\leq \varepsilon'_s(\omega)$ implies $y\in C^D_s(\omega)$. Define $\lambda_s(\omega):=|(H^X_s(\omega),\tilde H^X_s(\omega))|^2_*$, $A = \{(s,\omega)\in[t,T]\times\Omega: \lambda_s(\omega)>0\}$ 
and denote by $\varepsilon=(\varepsilon_s)_{s\in[0,T]}$
the process given by $\varepsilon_s(\omega) := I_A(s,\omega)\varepsilon'_s(\omega)/\lambda_s(\omega)$. 
Then $\xi' := (\varepsilon\, H^X \cdot W)_{t,T} + (\varepsilon\, \tilde H^X\cdot \tilde N)_{t,T}$ is element 
of $\S^D$.
Since $X\in L^2(\F_T)\backslash L^2(\F_t)$, the set $A$ has positive $\td\mathbb P\times\td t$-measure so that 
$$\E{\E{X\xi'|\F_t}} = \E{\E{\left.\int_t^TI_A\varepsilon'_s\td s\right|\mathcal F_t}} = 
\E{\int_t^TI_A\varepsilon'_s\td s} > 0,$$
which implies, as $\E{X\xi'|\F_t}$ is nonnegative, that $\mathbb P(\E{X\xi'|\F_t}>0)>0$.
\end{proof}
\noindent\textit{Proof of Theorem \ref{theoremdual1}.}
Let us first show the implication `$\Leftarrow$': 
We note first that, as is straightforward to 
verify, $\S^D$ given in \eqref{S} is additively $m$-stable, 
convex, bounded, closed and contains zero. 
Moreover, Lemma~\ref{lemmaint} and Proposition~\ref{prop2}(ii) 
imply that, for any $t\in[0,T]$, $D_t:L^2(\F_T)\to L^2(\F_t)$ defined by \eqref{cr}
satisfies (D4). Hence, by Theorem~\ref{mstable} 
$D=(D_t)_{t\in[0,T]}$ is a dynamic deviation measure.

We next turn to the proof of `$\Rightarrow$'. In view of Theorem~\ref{mstable} it suffices to show that 
$\mathcal S^D$ is given by the expression in \eqref{S}.
For any $n\in\mathbb N$ let $D^{(n)}$ be defined as in (\ref{defdk}). As noted before
$(D^{(n)})_{n\in\mathbb N}$ is a collection of dynamic deviation measures increasing to $D$ (Lemma~\ref{lemmadk})
and the corresponding sequence $(g^n)_{n\in\mathbb N}$ of driver functions  is increasing and
satisfies $g^n\leq g$ (Proposition~\ref{theoremequivalent}(iv)), where $g$ is the function 
in the representation \eqref{nond-rep} of $D$ (in Theorem~\ref{Nondom}).
For $u\in\mathbb{R}^d$, $\tilde{u}\in L^2(\nu(\td x))$ and $n\in\mathbb N$ define
$$r^n(s,u,\tilde{u}):=\sup_{h\in\mathbb{Q}^d,\tilde{h}\in \{h_1,h_2,h_3,\ldots\}}\left\{u^{\intercal}\,h+\IR \tilde{u}(x)\tilde{h}(x)\nu(\td x)-g^n(s,h,\tilde{h})\right\},$$
where  $\{h_1,h_2,h_3,\ldots\}$ denotes a countable basis of $L^2(\nu(\td x))$.
Note that for any $n\in\mathbb{N}$ we have (i) $r^n$  lower semi-continuous and convex in $(u,\tilde{u})$ and  (ii) $r^n$ is a (convex) indicator function of some convex and closed set, say $C^n=(C^n_s)_{s\in[0,T]}$.  
Furthermore, we note the following observations: 
{(a)} since $r^n$ is the supremum of a $\mathcal{P}\otimes \mathcal{B}(\mathbb{R}^d)\otimes \mathcal{U}$ measurable process and $C^n$ is the set where $r^n$ is equal to zero, we have 
that $C^n$ is also $\mathcal{P}\otimes \mathcal{B}(\mathbb{R}^d)\otimes \mathcal{U}$-measurable and
{(b)} as the functions $g^n(s,h,\tilde h)$ is continuous in $(h,\tilde{h})$, $r^n$ coincides with the dual conjugate of $g^n$, so that  we have $\td\mathbb P\times \td t$ a.e.
\be\label{gck}
g^n(s,\omega,h,\tilde{h})=\sup_{(u,\tilde{u})\in C^n_s(\omega)}\left\{u^{\intercal}\,h+\IR \tilde{u}(x)\tilde{h}(x)\nu(\td x)\right\}.
\ee
Moreover, we have that {(c)} as the sequence $(g^n)_n$ is increasing, $(r^n)_n$ is a decreasing sequence 
so that $C^n\subset C^{n+1}$ for any $n\in\mathbb N$.
Denote $C=\cup_{n=1}^\infty C^n$ and note that $C$ is convex and measurable as the increasing union of convex and measurable  sets.

Let us next establish the representation (\ref{cr}) for $D^{(n)}(X)$ for given $n\in\mathbb N$ and $X\in L^2(\F_T)$.
As $D^{(n)}(X) = D^{g^n}(X)$ we have
\begin{align}
 D^{(n)}_0(X)&=\E{\int_0^T \sup_{(u,\tilde{u})\in C^n_s} \left(u^{\intercal} H_s^X+\IR \tilde{u}(x)\tilde{H}^X_s(x)\nu(\td x)\right)\td s}\nonumber\\
&\geq \sup_{\{(H,\tilde{H})|(H_s,\tilde{H}_s)\in C^n_s, s\in[0,T]\}}\E{\int_0^T  \left(H_s^{\intercal} H_s^X+\IR \tilde{H}_s(x)\tilde{H}^X_s(x)\nu(\td x)\right)\td s} \label{last5}
\\
&= \sup_{\xi\in \mathcal{M}^n}\E{\int_0^T  \left((H^\xi_s)^{\intercal} H_s^X+\IR \tilde{H}^\xi_s(x)\tilde{H}^X_s(x)\nu(\td x)\right)\td s},\label{last6}
\end{align}
with $\mathcal{M}^n:=\{\xi\in\mathcal{Q}|(H^\xi_s,\tilde{H}^\xi_s)\in C^k_s, s\in[0,T]\,\}$, 
where the supremum in \eqref{last5} is taken over pairs $(H,\tilde H)\in L_d^2(\mathcal{P},\td\mathbb P\times \td t)\times
L^2(\mathcal{P}\times \mathcal{B}(\R^k\setminus\{0\} ),\td\mathbb P\times \td t \times \nu(\td x))$. 
 
Let us show next that the inequality in (\ref{last5}) is in fact an equality.
It is well known (see for instance Theorem 2.4.9 in Zalinescu (2002)) that the subgradients of continuous and convex functions are non-empty so that the suprema in the dual representations of the functions $g^n$, $n\in\mathbb N$, 
are attained. Hence, we can apply a measurable selection theorem to the set
$$G^n:=\left\{(s,\omega,u,\tilde{u})\,\left| g^n(s,\omega,H^X_s,\tilde{H}^X_s)- u^{\intercal}\,H^X_s - \IR \tilde{u}(x)\tilde{H}^X_s(x)\nu(\td x) +J_{ C^n_s(\omega)}(u,\tilde{u})=0\right.\right\}, $$
obtaining $\mathcal{P}\times \mathcal{P}\otimes \mathcal{U}$-measurable processes $(U^n,\tilde{U}^n)$ 
such that, for every $s$, $(U^n_s,\tilde{U}^n_s)\in C^n_s$ and
$g^n(s,H^X_s,\tilde{H}^X_s)= (U^n_s)^{\intercal} H^X_s+\IR \tilde{U}^n_s(x)\tilde{H}^X_s(x)\nu(\td x)$. 
This implies (\ref{last5}) holds with equality, and yields the desired representation for $D^{(n)}$.

To see that we also get a representation for $D$ let us first prove that the set $C$ defined above 
(our natural candidate to satisfy \eqref{cr}--\eqref{S}) is closed.
Note that from (\ref{last6}) it follows that for any $X\in L^2(\F_T)$
\be\label{dualsets}
\sup_{\xi\in\S\cap\mathcal{A}^n}\E{\xi X}=D_0^{(n)}(X)= \sup_{\xi\in \mathcal{M}^n}\E{\int_0^T  \left((H^\xi_s)^{\intercal} H_s^X+\IR \tilde{H}^\xi_s(x)\tilde{H}^X_s(x)\nu(\td x)\right)\td s}.
	\ee
As $\S\cap \mathcal{A}^n$ and $\mathcal{M}^n$ are both convex and closed sets, we conclude from (\ref{dualsets})
$\S\cap\mathcal{A}^n=\mathcal{M}^n.$ In particular, for $m\geq n$ we have $\mathcal{M}^n=\mathcal{M}^m \cap \mathcal{A}^n$. As there is a one-to-one correspondence between  $\xi\in\mathcal{Q}$ and square-integrable predictable processes $(H,\tilde{H})$ this entails that $$C^n=C^m\cap \left\{(H,\tilde{H})\in L^2(\td\mathbb P\times \td t)\times  L^2(\td\mathbb P\times \td t\times \nu(\td x))\left|\sup_{t\in[0,T]}\{|H_t|^2+\IR |\tilde{H}_t(x)|^2 \nu(\td x)\}\leq n^2\right.\right\}.$$
Hence, $\td\mathbb P\times \td t$ a.e.
$$C^n_t(\omega)=C^m_t(\omega)\cap \left\{(h,\tilde{h})\in \mathbb{R}^d\times L^2(\nu(\td x))\left||h|^2+\IR |\tilde{h}(x)|^2 \nu(\td x)\leq n^2\right.\right\}.$$
Taking the union over all $m\in\mathbb N$ on the right-hand side of previous display yields
$$C^n_t(\omega)=C_t(\omega)\cap \left\{(h,\tilde{h})\in \mathbb{R}^d\times L^2(\nu(\td x))\left||h|^2+\IR |\tilde{h}(x)|^2 \nu(\td x)\leq n^2\right.\right\}.$$
Since the sets $C^n_t(\omega)$, $n\in\mathbb N$, are closed in $\mathbb{R}^d\times L^2(\nu(\td x))$, 
we have that also $C_t(\omega)$ is closed.

As $g^n$, $n\in\mathbb N$, are convex positively homogeneous driver functions it follows by Lemma \ref{lemmaint} 
that $0\in\mathrm{int}(C^n)$. As $C^n\subset C$ we have thus that $0\in\mathrm{int}(C)$.

Finally, to show that $C$ satisfies the desired representation (\ref{cr})--\eqref{S} we note that $D_0(X)$ is equal to
\begin{align*}
\sup_{n\in\mathbb N} D^{(n)}_0(X) &= \sup_{n\in\mathbb N}\sup_{\{(H,\tilde H)|(H_s,\tilde{H}_s)\in C^n_s, s\in[0,T]\}}\E{\int_0^T  
\left(H_s^{\intercal} H_s^X+\IR \tilde{H}_s(x)\tilde{H}^X_s(x))\nu(\td x)\right)\td s}\\
&=\sup_{\{(H,\tilde H)|(H_s,\tilde{H}_s)\in C_s, s\in[0,T]\}}\E{\int_0^T  \left(H_s^{\intercal} H_s^X+\IR \tilde{H}_s(x)\tilde{H}^X_s(x)\nu(\td x)\right)\td s}\\
&= \sup_{\{\xi\in\mathcal Q|(H^\xi_s,\tilde{H}^\xi_s)\in C_s, s\in[0,T]\}}\E{\xi X}, 
\end{align*}
where in the first and second line the suprema are taken over pairs $(H,\tilde H)\in L_d^2(\mathcal{P},\td\mathbb P\times \td t)\times
L^2(\mathcal{P}\times \mathcal{B}(\R^k\setminus\{0\} ),\td\mathbb P\times \td t \times \nu(\td x))$. 
This yields (\ref{cr})--\eqref{S} for $s=0$, and hence for all $s\in[0,T]$ by Remark~\ref{rem1}(ii).
Thus, the implication `$\Rightarrow$' is shown, and the proof is complete. \qed

\section{Dynamic mean-deviation portfolio optimisation}\label{secap}
We turn  next to the stochastic optimisation problem of identifying a dynamic portfolio allocation strategy
that maximizes the sum of the expected return and a penalty for its riskyness given in terms of a dynamic deviation measure of the final wealth achieved under this allocation strategy.
Throughout this section we impose the following conditions: 
\begin{As}\label{asm}
{\bf (i)} The L\'{e}vy measure $\nu$ is such that
$\nu(\{x\in\R^k\backslash\{0\}: \min_{i=1,\ldots, k} x_i \leq -1\}) = 0$,
and 
\begin{equation}\label{nuc}
\nu_2 := \int_{\R^k\backslash\{0\}} |x|^2 \nu(\td x) < \infty. 
\end{equation}
{\bf (ii)} $D$ is a $g$-deviation measure with non-random, time-independent driver 
$\hat g: \R^d\times L^2(\nu(\td x))\to \R_+$. 
\end{As}
Under \eqref{nuc}, $L=(L^1_t, \ldots, L^k_t)^{\intercal}_{t\in[0,T]}$ with $L^j_t 
= \int_{[0,t]\times\R^k\backslash\{0\}} x_j \tilde N(\td s \times\td x)$, $j=1,\ldots, k$, where $x_j$ 
is the $j$th coordinate of $x\in\mathbb R^k$, is a vector of pure-jump $(\mathcal F_t)$-martingales.

The financial market that we consider consists of a bank-account that pays interest at a fixed rate $r\ge0$
and $n$ risky stocks (with $1\leq n\leq \min\{d,k\}$) with price processes $S^i=(S^i_t)_{t\in[0,T]}$, 
$i=1,\ldots, n$, satisfying the SDEs given by
\begin{eqnarray} \label{Si1}
&& \frac{\td S_t^i}{S^i_{t-}} = \mu_i\,\td t + \sum_{j=1}^d\sigma_{ij}\,\td W^j_t + \sum_{j=1}^k\rho_{ij}\,\td  L^j_t, \q t\in(0,T], 
\end{eqnarray}
where $S^i_0= s_i \in \mathbb R_+\backslash\{0\}$, $\mu_i\in\R$, $\sigma_{ij}\in\R_+$ and $\rho_{ij}\in\R_+$ such that $\sum_{j=1}^{k}\rho_{ij} \leq 1$ denote the rates of appreciation, the volatilities 
and the jump-sensitivities. By $\pi=(\pi^1, \ldots, \pi^n)^{\intercal}$ we denote the dynamic allocation process
that indicates the fraction  of the total wealth that is invested in the stocks $1,\ldots, n$ 
(that is, if $X^{\pi}(t-)$ denotes the wealth just before time $t$, $\pi_i(t)X^{\pi}(t-)$ is 
the cash amount invested in stock $i$ at time $t$ under allocation strategy $\pi$). We adopt the standard frictionless setting 
(no transaction costs, infinitely divisible 
stocks, continuous trading, {\em etc.}) and restrict to the case that short-sales and borrowing are not permitted, 
by only considering allocation processes $\pi=(\pi_t)_{t\in[0,T]}$ that take values in the set 
$$\mathcal B = \left\{x\in\R^{1\times n}: \min_{i=1,\ldots, n} x_i \ge 0, \sum_{i=1}^n x_i \leq 1\right\}.$$ 
Such an allocation process $\pi$ is said to be {\em admissible} if (i) $\pi$ is predictable, 
(ii) the associated wealth process $X^{\pi}$ is non-negative 
(that is, $X^{\pi}$ satisfies the insolvency constraint $\inf_{t\in[0,T]} X_t^\pi \ge 0$) 
and (iii) $\pi$ is a self-financing portfolio such that $X^{\pi}$ satisfies the SDE 
(with $\mu=(\mu_1,\ldots, \mu_n)^{\intercal}$, $\Sigma=(\sigma_{ij})\in\R^{n\times d}$ 
and $R = (\rho_{ij})\in\R^{n\times k}$) 
given by
\begin{eqnarray}\label{Xpi}
&& \frac{\td X_t^{\pi}}{X_{t-}^{\pi}} =  [r  + (\mu - r \mathbf{1})^\intercal\pi_t]\,\td t + \pi_t^\intercal\Sigma\, \td W_t 
+ \pi_t^\intercal R\, \td L_t
,\q t\in(0,T],
\end{eqnarray}
with initial wealth $X^{\pi}_0=x \in\mathbb R_+\backslash\{0\}$, where $\mathbf{1}\in\R^{n\times 1}$ denotes 
the column vector of ones. 
We denote by $\Pi$ the collection of admissible allocation strategies and let 
$\gamma>0$ denote a risk-aversion parameter. 
To a given allocation strategy $\pi\in\Pi$ we associate the following dynamic performance criterion:
\begin{equation}\label{mean-dev2}
J^{\pi}_t := E[X_T^{\pi}|\F_t]-\gamma D_{t}(X_T^{\pi}),  \q t\in[0,T].
\end{equation}
Due to the fact that, unlike the conditional expectation, $D_{t}(X)$ is a non-linear function 
of $X$, the Dynamic Programming Principle is not satisfied for this objective.
There is a growing literature exploring alternative solution approaches 
to dynamic optimisation problems for which the Dynamic Programming Principle is not applicable.
One alternative dynamic solution concept is that of subgame-perfect Nash equilibrium---%
in such a game-theoretic approach  the problem~\eqref{mean-dev2} 
may informally be seen as a (non-cooperative) game with infinitely many players, one for each time $t$, 
which may be interpreted in terms of  the changing preferences of one person over time; see 
Ekeland and Pirvu (2008) and Bj\"{o}rk and Murgoci (2010) for background, and see Basak and Chabakauri (2010), 
Bj\"{o}rk and Murgoci (2010), Wang and Forsyth (2011), Czichowsky (2013), 
Bj\"{o}rk {\em et al.} (2014), Bensoussan {\em et al.} (2014), and references therein, for studies of dynamic mean-variance portfolio optimisation problems. 
Following Ekeland and Pirvu (2008) and Bj\"{o}rk and Murgoci (2010) we have the following formalisation of 
this equilibrium solution concept in our setting:
\begin{definition}\label{defeq} {\bf (i)} An allocation strategy $\pi^*\in\Pi$ is an {\em equilibrium policy} for 
the dynamic mean-deviation problem with objective \eqref{mean-dev2} if 
\begin{equation}
\liminf_{h\searrow 0} \frac{J_t^{\pi^*} - J_t^{\pi(h)}}{h} \ge 0  
\end{equation}
for any $t\in[0,T)$ and any policy $\pi(h)\in \Pi$ satisfying, for some $\pi\in\Pi$, 
$$\pi(h)_s = \pi_s I_{[t,t+h)}(s) + \pi^*_sI_{[t+h,T]}(s),\quad s\in[t,T].$$

\noindent{\bf (ii)} An equilibrium policy $\pi^*$ is of {\em feedback type} if, for some 
{\em feedback function} $\pi_*:[0,T]\times\R_+\to\mathcal B$ such that \eqref{Xpi} with $\pi_{t}$ replaced 
by $\pi_*(t,X_{t-})$ has a unique solution $X^*=(X^*_t)_{t\in[0,T]}$, we have 
$$\pi^*_{t} = \pi_*(t, X^*_{t-}),\quad t\in[0,T],$$ 
with $X^*_{0-}=X^*_{0}$.
\end{definition}
For a given equilibrium policy $\pi^*=(\pi^*_t)_{t\in[0,T]}$ of {feedback type} we have 
by the Markov property that $J^{\pi^*}_t = V(t, X^{*}_t)$ and $\E{X^{\pi^*}_T|\F_t} = h(t,X^{*}_t)$, $t\in[0,T]$, 
for some functions $V:[0,T]\times\R_+\to\R_+$ and $h:[0,T]\times\R_+\to\R_+$.
Furthermore, if $h$ is sufficiently regular ({\em e.g.,} $h\in C^{1,2}([0,T]\times\R_+,\R_+)$ 
and $h'\equiv \frac{\partial h}{\partial x}$ 
is bounded) we find by an application of It\^{o}'s lemma  
that the representing pair of $X^{*}_T$ is given by
\begin{eqnarray*}
&& H^{X^{*}_T}_s = a^{*,h}(s,X^{*}_{s-}),\quad 
\tilde H^{X^{*}_T}_s(y) = b^{*,h}(s,X^{*}_{s-},y),\quad s\in[0,T],\ y\in\mathbb R^k\backslash\{0\},\ \text{with}\\
&& a^{*,h}(s,x) := h'(s,x)\,x\, \pi_{*}(s,x)^\intercal\Sigma, \quad
b^{*,h}(s,x,y) := h(s,x + \,x \pi_{*}(s,x)^\intercal R y) - h(s,x),
\end{eqnarray*}
so that $D_t(X^{*}_T)$, $t\in[0,T]$, takes the form $D_t(X^{*}_T) = \tilde D_{t,X^{*}_t}(X^{*}_T)$, 
where 
\begin{equation}
\tilde D_{t,x}(X^{*}_T) = \mathbb E_{t,x}\left[\int_t^T \hat g(a^{*,h}(s,X^{*}_{s-}), b^{*,h}(s,X^{*}_{s-},\,\cdot\,))\td s\right], \q 
(t,x)\in[0,T]\times\mathbb R_+,
\end{equation}
with $\mathbb E_{t,x}[\,\cdot\,] = \E{\cdot|X^{*}_t = x}$.
To any vector $\pi\in\mathcal B$ we associate the operators 
$\mathcal L^{\pi}: f\mapsto \mathcal L^{\pi}f$ and 
$\mathcal G^{\pi}: f\mapsto \mathcal G^{\pi}f $ 
that map $C^{0,2}([0,T]\times\mathbb R_+,\mathbb R)$ to $C^{0,0}(\mathbb R_+,\mathbb R)$ 
and are given by 
\begin{eqnarray}\label{Lp}
\!\!\!\!\!\!\!\!\!&&\mathcal L^{\pi}f(t,x) = \mu_{\pi} xf'(t,x) + \mbox{$\frac{\sigma_\pi^2}{2}$} x^2 f''(t,x) 
+ \int_{\R^k\backslash\{0\}}[f(t,x + x\pi^\intercal R y) - f(t,x) -  x\pi^\intercal R yf'(t,x)]\nu(\td y),\qquad\\
\!\!\!\!\!\!\!\!\!&&\mathcal G^{\pi}f(t,x) = \hat g(xf'(t,x)\, \pi^\intercal\Sigma, \delta_{x\pi^{\intercal} R I} f(t,x)),
\label{Gp}
\end{eqnarray}
for  $(t,x)\in[0,T]\times\mathbb R_+$, where $\delta_y f:\R_+\to\R$ and
$I:\mathbb R^{k\times1} \to \mathbb R^{k\times1}$ 
are given by 
$$\delta_y f(x) = f(t,y+x) - f(t,x),\q I(z)=z,\q 
 z\in\R^{k\times 1}, x\in\R_+, y\in\R,
$$
and where  
$$\mu_\pi = r + (\mu-r\mathbf 1)^{\intercal}\pi,\qquad \sigma_\pi^2 = \pi^{\intercal}\Sigma\Sigma^{\intercal}\pi, 
\qquad \pi\in\mathcal B.$$

\noindent Given the form of the objective and Definition~\ref{defeq}
we are led to consider the {\em extended Hamilton-Jacobi-Bellman equation} for a triplet 
$(\pi_*, V,h)$ of a feedback function $\pi_*$, the corresponding value function $V$ 
and auxiliary function $h$ given by (denoting $\dot{V} = \frac{\partial V}{\partial t}$):
\begin{eqnarray}\label{H1}
&& \dot{V}(t,x) + \sup_{\pi\in\mathcal B}\{\mathcal L^{\pi}V(t,x) - 
\gamma \mathcal G^{\pi}h(t,x)\} = 0,\quad \,   (t,x)\in [0,T)\times\R_+\backslash\{0\},\\
\label{HH1}
&&\dot{h}(t,x) + \mathcal L^{\pi_*(t,x)}h(t,x) = 0,\qquad \qquad\qquad\quad \ \ \  (t,x)\in [0,T)\times\R_+\backslash\{0\},\\
&& V(T,x) = h(T,x) = x, \qquad\qquad\!  x\in\R_+, \label{HH2}\\
&& V(t,0) = h(t,0) = 0, \qquad\qquad\q\!\!  t\in[0,T], \label{H2}
\end{eqnarray} 
where, for any $t\in[0,T]$ and $x\in\R_+$, $\pi_*(t,x)$ is a maximiser of the supermum in \eqref{H1}
(note that the continuity of $\mathcal L^{\pi}V(t,x)$ and $\mathcal G^{\pi}h(t,x)$ 
in $\pi$ for each fixed $t\in[0,T]$ and $x\in\R_+\backslash\{0\}$ in conjunction with the compactness of $\mathcal B$ 
guarantees that the maximum in \eqref{H1} is attained).

We have the following verification result:
\begin{theorem}\label{thm:V} Let $(\pi_*,h,V)$ be a triplet satisfying the extended HJB equation~\eqref{H1}--\eqref{H2}, 
let $X^*$ be the unique solution of \eqref{Xpi} with $\pi_t$ replaced by $\pi_*(t,X_{t-})$ and define 
$\pi_t^* = \pi_*(t, X^{*}_{t-})$, $t\in[0,T]$.
Assume $h,V\in C^{1,2}([0,T]\times\R_+,\R)$ with $h', V'$ bounded and that $\pi^*=(\pi^*_t)_{t\in[0,T]}\in\Pi$.
Then $\pi^*$ is an equilibrium policy of feedback type and $h$ and $V$ are given by
$V(t,x) = \mathbb E_{t,x}[X_T^{\pi^*}] - \gamma \tilde D_{t,x}(X_T^{\pi^*})$ 
and $h(t,x) = \mathbb E_{t,x}[X_T^{\pi^*}]$ for $(t,x)\in[0,T]\times\R_+$.
\end{theorem}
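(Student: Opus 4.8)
The plan is to read the two coupled equations probabilistically and then verify the equilibrium inequality of Definition~\ref{defeq} by a first-order expansion in the length $h$ of the deviation window, the crucial ingredient being the time-consistency axiom (D6). First I would identify $h$. Equation \eqref{HH1} with data \eqref{HH2}--\eqref{H2} is the linear Kolmogorov equation whose spatial operator $\mathcal L^{\pi_*(t,x)}$ is exactly the generator of the wealth dynamics \eqref{Xpi} under the feedback law $\pi_*$. Applying It\^o's formula to $h(s,X^*_s)$ and using \eqref{HH1} makes the drift vanish, so $h(s,X^*_s)$ is a local martingale; the boundedness of $h'$ turns the $\td W$- and $\tilde N$-integrals into true martingales, whence $h(t,X^*_t)=\mathbb E[X^*_T\mid\F_t]$ and $h(t,x)=\mathbb E_{t,x}[X^{\pi^*}_T]$.

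Next I would identify $V$ and the value. Since the maximiser in \eqref{H1} is attained at $\pi_*(t,x)$, $V$ solves the linear equation $\dot V+\mathcal L^{\pi_*(t,x)}V-\gamma\,\mathcal G^{\pi_*(t,x)}h=0$ with terminal condition $V(T,x)=x$ and $V(t,0)=0$. As the representing pair of $X^*_T$ is $(a^{*,h},b^{*,h})$, the definition of $\mathcal G$ in \eqref{Gp} gives $\mathcal G^{\pi_*(s,x)}h(s,x)=\hat g(a^{*,h}(s,x),b^{*,h}(s,x,\cdot))$, i.e.\ the integrand of $\tilde D_{t,x}$. A Feynman--Kac argument for $V$ (It\^o on $V(s,X^*_s)$, boundedness of $V'$ for genuine martingality) then yields
\[
V(t,x)=\mathbb E_{t,x}[X^*_T]-\gamma\,\mathbb E_{t,x}\Big[\int_t^T \mathcal G^{\pi_*}h(s,X^*_{s-})\,\td s\Big]=\mathbb E_{t,x}[X^{\pi^*}_T]-\gamma\,\tilde D_{t,x}(X^{\pi^*}_T),
\]
which is the asserted representation and, combined with the previous step, shows $J^{\pi^*}_t=V(t,X^*_t)=h(t,X^*_t)-\gamma\,W(t,X^*_t)$, where $W(t,x):=\tilde D_{t,x}(X^{\pi^*}_T)$.

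For the equilibrium property, fix $t$, an arbitrary $\pi\in\Pi$ and the pasted control $\pi(h)$, and set $Y_h:=X^{\pi(h)}_{t+h}$ and $x:=X^*_t$. On $[t+h,T]$ the control reverts to the feedback $\pi_*$, so by the Markov property $\mathbb E[X^{\pi(h)}_T\mid\F_{t+h}]=h(t+h,Y_h)$ and $D_{t+h}(X^{\pi(h)}_T)=W(t+h,Y_h)$. The decisive step is to apply (D6) with $(s,\tau)=(t,t+h)$, giving $D_t(X^{\pi(h)}_T)=D_t(h(t+h,Y_h))+\mathbb E[W(t+h,Y_h)\mid\F_t]$; moreover, since $h(t+h,Y_h)$ is $\F_{t+h}$-measurable, the integral representation \eqref{charac} together with It\^o on $h(s,Y_s)$ over $[t,t+h)$ identifies its representing pair and gives $D_t(h(t+h,Y_h))=\mathbb E[\int_t^{t+h}\mathcal G^{\pi_s}h(s,Y_{s-})\,\td s\mid\F_t]$. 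Substituting these, Dynkin's formula for the linear terms in $h$ and $W$, and $h=V+\gamma W$ to cancel the $W$-contributions, collapses the difference to
\[
J^{\pi^*}_t-J^{\pi(h)}_t=-\,\mathbb E\Big[\int_t^{t+h}\big[\dot V+\mathcal L^{\pi_s}V-\gamma\,\mathcal G^{\pi_s}h\big](s,Y_{s-})\,\td s\ \Big|\ \F_t\Big].
\]
Dividing by $h$ and letting $h\searrow0$ (right-continuity of $\pi$, continuity of the integrand, and the $C^{1,2}$ and boundedness hypotheses justify passing to the limit) yields $-[\dot V+\mathcal L^{\pi_t}V-\gamma\,\mathcal G^{\pi_t}h](t,x)$, which is $\ge 0$ because \eqref{H1} makes the bracket nonpositive for every admissible value $\pi_t\in\mathcal B$. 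This is precisely the equilibrium inequality.

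The main obstacle is this last step: correctly invoking (D6) to split the nonlinear deviation term into a boundary piece $D_t(h(t+h,Y_h))$ and a continuation piece $\mathbb E[W(t+h,Y_h)\mid\F_t]$, recognising via \eqref{charac} that the former contributes $\mathcal G^{\pi}h$ at first order in $h$ (which is exactly where the penalisation term in \eqref{H1} originates), and justifying the interchange of the limit $h\searrow0$ with the conditional expectations under the stated regularity. The Feynman--Kac steps identifying $h$ and $V$ are routine given the boundedness of $h'$, $V'$ and Assumption~\ref{asm}.
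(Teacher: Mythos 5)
Your proposal is correct and follows essentially the same route as the paper: It\^o/Feynman--Kac identification of $h$ and $V$, the (D6) splitting $D_t(X^{\pi(h)}_T)=D_t(\E{X^{\pi(h)}_T|\F_{t+h}})+\E{D_{t+h}(X^{\pi(h)}_T)|\F_t}$ combined with the Markov property and the representation \eqref{charac} to turn the boundary deviation into $\E{\int_t^{t+h}\mathcal G^{\pi_s}h(s,Y_{s-})\,\td s|\F_t}$, and the collapse of $J_t^{\pi^*}-J_t^{\pi(h)}$ to a conditional integral of the HJB expression whose sign is controlled by \eqref{H1}. The only (harmless) deviation is your final passage to the limit $h\searrow 0$, which the paper sidesteps: since the integrand is nonpositive pointwise, $J_t^{\pi^*}-J_t^{\pi(h)}\ge 0$ already holds for each $h$, so no interchange of limits or right-continuity of $\pi$ is needed.
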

\begin{proof}{}
We first verify the stochastic representations.
Let $\pi=(\pi_s)_{s\in[0,T]}\in\Pi$, $t\in[0,T)$ and $\tau\in(0,T-t)$ be given 
and denote $\Xi^{\pi,V,h}(s,x) := (\dot{V} + \mathcal L^{\pi_s}V - \gamma \mathcal G^{\pi_s}h)(s,x)$, 
$a^{\pi,V}(s,x) :=  V'(s,x) x \pi_s^\intercal\Sigma$ 
and $b^{\pi,V}(s,x,y) = V(s,x + x\pi_s^\intercal Ry) - V(s,x)$. 
An application of It\^o's lemma to $V(t+\tau, X^{\pi}_{t+\tau})$ shows that
\begin{multline}\label{VIto}
V(t+\tau, X^{\pi}_{t+\tau}) - V(t, X^{\pi}_{t})   - \gamma 
\int_t^{t+\tau}\mathcal G^{\pi_s}h(s,X_{s-}^{\pi})\td s = \int_t^{t+\tau} \Xi^{\pi,V,h}(s,X^{\pi}_{s-}) \td s\\
+ \int_t^{t+\tau} a^{\pi,V}(s,X^{\pi}_{s-}) \td W_s  + 
\int_{(t,{t+\tau}]\times\R^k\backslash\{0\}} b^{\pi,V}(s,X^{\pi}_{s-},y) \tilde N (\td s \times \td y).
\end{multline}
Similarly, it follows $h(t+\tau, X^{\pi}_{t+\tau})$ 
satisfies \eqref{VIto} with $V$, $\mathcal G^{\pi_s}h$ and $\Xi^{\pi,V,h}$ replaced by $h$, 0 and 
$\Xi^{\pi,h,0}$, respectively. In particular, choosing $\pi$ equal to $\pi^*$,  and taking expectations, 
the three terms on the right-hand side of \eqref{VIto} vanish in view of \eqref{H1}, the fact that $\pi_*(t,x)$ is a maximiser in \eqref{H1} and as the stochastic integrals are martingales (in view of the boundedness of $V', h'$ and $\mathcal B$). 
Then, letting $\tau\nearrow T-t$ and using the boundary conditions \eqref{HH2},  
we obtain the asserted stochastic representations of $h$ and $V$. 

Next we turn to the proof that $\pi^*$ is an equilibrium solution. By an application of the tower-property of conditional expectation and (D6) we have
for any $\pi\in\Pi$, $t\in[0,T)$ and $\tau\in(0,T-t)$ 
\begin{eqnarray}\nonumber
J_t^{\pi} &=& \E{\E{X^{\pi}_T|\mathcal F_{t+\tau}}|\mathcal F_t} 
- \gamma \E{D_{t+\tau}(X_T^{\pi})|\mathcal F_t}
-\gamma D_t(\E{X^{\pi}_T|\mathcal F_{t+\tau}})\\
&=& \E{J_{t+\tau}^\pi|\F_t} - \gamma D_t(\E{X^{\pi}_T|\mathcal F_{t+\tau}}).
\label{Jrec}
\end{eqnarray}

Fixing $(\epsilon_n)_n$, $\epsilon_n\searrow 0$, and strategies $\pi_n:=\pi(\epsilon_n)\in\Pi$ as in 
Definition~\ref{defeq} (with $\pi^*$ as asserted in the theorem) and noting that the Markov property (which is in force 
as $\pi^*$ is a feedback strategy)
implies  
\begin{equation}\label{hVmp}
J_{t+\epsilon_n}^{\pi_n} = V(t+\epsilon_n, X^{\pi_n}_{t+\epsilon_n}),\q
\E{X^{\pi_n}_T|\mathcal F_{t+\epsilon_n}} = h(t+\epsilon_n, X^{\pi_n}_{t+\epsilon_n}),
\end{equation}
and that \eqref{hVmp} remains valid with $\pi_n$ replaced by $\pi^*$,
we have from \eqref{VIto} and \eqref{Jrec} and the fact 
$D_t(h(t+\epsilon_n,X_{t+\epsilon_n}^{\pi_n} )) = \E{\left.\int_t^{t+\epsilon_n}\hat g(a_s^{\pi_n,h}, b_s^{\pi_n,h})\td s\right|\mathcal F_t}$ 
that
\begin{equation}
J_t^{\pi^*} - J_t^{\pi_n} = \E{\left.\int_t^{t+\epsilon_n}
[\Xi^{\pi^*,V,h}(s,X^{\pi^*}_{s-}) - \Xi^{\pi_n,V,h}(s,X^{\pi_n}_{s-})] 
\td s\right|\F_t}.
\end{equation}
Since $\Xi^{\pi_*(s,x),V,h}(s,x)=0$ and $\Xi^{\pi_n,V,h}(s,x)\leq 0$ for $s\in[0,T]$, $x\in\R_+$, 
(by \eqref{H1} and the fact that $\pi_*(t,x)$ is the maximiser in \eqref{H1}) we have 
$
\liminf_{n\to\infty}(J_t^{\pi^*} - J_t^{\pi_n})/\epsilon_n 
\ge 0,  
$
and the proof is complete.
\end{proof}

We next identify an explicit equilibrium policy for the mean-deviation portfolio optimisation problem, 
under the following regularity assumption on $\Sigma$, $R$ and $\hat g$, assumed to be in force in the sequel:

\begin{As}\label{AS2}
For some countable set $A$ and any $a\in[0,\gamma^{-1}]\backslash A$, the function $T_a:\mathcal B\to\mathbb R$
given by
\begin{equation}\label{Ta}
T_a(c) := a\,(\mu-r\mathbf 1)^\intercal c - \hat g(c^\intercal\Sigma,c^\intercal R I),\quad c\in\mathcal B,
\end{equation}
achieves its maximum over $\partial B$ at a unique $c^*\in\partial\mathcal B$.\footnote{$\partial\mathcal B$ denotes the boundary of $\mathcal B$, that is, $\partial B=\mathrm{cl}(\mathcal B)\backslash \mathrm{int}(\mathcal B)$ 
where  $\mathrm{cl}(\mathcal B)$ and  $\mathrm{int}(\mathcal B)$ denote the closure and interior of $\mathcal B$.} 
\end{As}

\noindent To define the optimal policy we deploy the following auxiliary result:

\begin{Lemma}\label{lem:faux}
For any $f:[0,T]\to\mathcal B$ denote by 
$A_f, d_f, b_f, F_f: [0,T]\to\mathbb R$ the functions 
given by
\begin{eqnarray}\label{bf}
b_f(t) &:=& \exp\left(\int_t^T\{r + (\mu-r\mathbf 1)^{\intercal} f(s)\}\td s\right),\\ 
\label{df}
d_f(t) &:=& b_f(t) \int_t^T \hat g(f(s)^\intercal\Sigma, f(s)^\intercal R I)\td s,\\
A_f(t) &:=& \gamma^{-1} - (b_f(t))^{-1} d_f(t),\\
F_f(t) &:=& A_{C_f}(t), \q\text{with}\\ \label{Cf}
C_f(t) &:=& 
\begin{cases}
\mathrm{arg}\,\sup_{c\in\partial\mathcal B}\,\left\{T_{f(t)}(c)\right\}, & \text{if $f(t)\notin A$},\\
\vspace{-0.4cm}\\
\mathrm{Centroid}(\mathrm{arg}\,\sup_{c\in\partial\mathcal B}\,\left\{T_{f(t)}(c)\right\}), & \text{if $f(t)\in A$},
\end{cases}
\end{eqnarray}
where for any Borel set $A'\subset\mathbb R^d$, $\mathrm{Centroid}(A')$ is equal to the mean of $U\sim\mathrm{Unif}(A')$. 
Then there exists a continuous non-decreasing function $a^*:[0,T]\to\mathbb R_+$ such that 
$a^* = F_{a^*}$.
\end{Lemma}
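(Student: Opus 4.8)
The plan is to reduce the identity $a^*=F_{a^*}$ to a scalar backward integral equation and to solve that. The first step is to observe that the exponential factor $b_f$ cancels in the definition of $A_f$: since $(b_f(t))^{-1}d_f(t)=\int_t^T\hat g(f(s)^\intercal\Sigma,f(s)^\intercal R I)\,\td s$ by \eqref{df}, one has $A_f(t)=\gamma^{-1}-\int_t^T\hat g(f(s)^\intercal\Sigma,f(s)^\intercal R I)\,\td s$. Writing $\Psi(\beta):=\hat g(C_\beta^\intercal\Sigma,C_\beta^\intercal R I)$ with $C_\beta=\mathrm{arg}\,\sup_{c\in\partial\mathcal B}T_\beta(c)$ as in \eqref{Ta}--\eqref{Cf}, and using $C_f(s)=C_{f(s)}$, the fixed-point equation $a^*=F_{a^*}=A_{C_{a^*}}$ is precisely
\[
a^*(t)=\gamma^{-1}-\int_t^T\Psi(a^*(s))\,\td s,\qquad t\in[0,T],
\]
that is, the backward Carath\'eodory equation $\dot a^*(t)=\Psi(a^*(t))$ with terminal datum $a^*(T)=\gamma^{-1}$.

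The second step is to record the properties of $\Psi$ on $[0,\gamma^{-1}]$. As $\partial\mathcal B$ is compact and $\hat g$ is continuous (being convex and finite), $\Psi$ is bounded, say by $M:=\max_{c\in\mathcal B}\hat g(c^\intercal\Sigma,c^\intercal R I)$, and $\Psi\ge0$ because $\hat g\ge0$. By Assumption~\ref{AS2} the maximiser $C_\beta$ is single-valued for $\beta\notin A$, and Berge's maximum theorem then gives that $\beta\mapsto C_\beta$, and hence $\Psi$, is continuous at every such $\beta$; thus $\Psi$ is Borel measurable and bounded, the centroid merely furnishing a canonical value on the countable exceptional set $A$. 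Exploiting that $T_\beta$ is positively homogeneous (as $\hat g$ is) one checks that $\{\Psi=0\}=\{\beta:C_\beta=0\}$ is an initial interval $[0,\beta_0]$, with $\beta_0=\inf_{c:\,(\mu-r\mathbf 1)^\intercal c>0}\hat g(c^\intercal\Sigma,c^\intercal R I)/\big((\mu-r\mathbf 1)^\intercal c\big)$, while $\Psi>0$ on $(\beta_0,\gamma^{-1}]$; in particular $\Psi(0)=0$.

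Because $\Psi\ge0$, every solution of the integral equation is automatically non-decreasing, which already yields the monotonicity claim; and since $a^*(T)=\gamma^{-1}$ and $\Psi$ vanishes on $[0,\beta_0]$, the solution cannot be driven below $\beta_0\ge0$, giving non-negativity. To produce a solution I would either (i) integrate explicitly by separation of variables on $(\beta_0,\gamma^{-1}]$, setting $a^*(t)=H^{-1}(T-t)$ with $H(\beta):=\int_\beta^{\gamma^{-1}}\Psi(b)^{-1}\,\td b$ as long as $T-t<\lim_{\beta\downarrow\beta_0}H(\beta)$, glued to a constant value below (here $H$ is continuous and strictly decreasing, so $H^{-1}$ is continuous); or (ii) apply Schauder's theorem to the operator $(\mathcal F a)(t):=\gamma^{-1}-\int_t^T\Psi(a(s))\,\td s$ on the set $K$ of $M$-Lipschitz non-decreasing functions $[0,T]\to[\gamma^{-1}-MT,\gamma^{-1}]$ with $a(T)=\gamma^{-1}$, which is convex and, by Arzel\`a--Ascoli, compact, and into which $\mathcal F$ maps.

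The main obstacle is the jump discontinuity of $\Psi$ across the countable non-uniqueness set $A$, and in particular at the transition $\beta_0$ where the optimal allocation switches off the corner $c=0$. For the Schauder route this is exactly where continuity of $\mathcal F$ can fail: if $a_n\to a$ uniformly with $a$ flat at a value $\beta\in A$ on a set of positive length, then $\int\Psi(a_n(s))\,\td s$ need not converge to $\int\Psi(a(s))\,\td s$. The key fact to establish is therefore that a solution traverses each $\beta\in(\beta_0,\gamma^{-1}]\cap A$ in zero Lebesgue time---immediate from strict monotonicity wherever $\Psi>0$---so that the centroid values of $\Psi$ on $A$ do not enter the integral, and that the only admissible flat level is one on which $\Psi=0$, i.e.\ inside $[0,\beta_0]$. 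Reconciling this with the behaviour at $\beta_0$ itself, where $\Psi$ jumps up from $0$, is the delicate case: I would handle it by first regularising $\hat g$ to a strictly convex driver so that $A=\emptyset$ and $\Psi$ is continuous, solving the regularised equation by (i) or (ii), and then passing to the limit using the uniform Lipschitz bound $M$ and Arzel\`a--Ascoli, identifying the limit as the required continuous non-decreasing fixed point of the original equation.
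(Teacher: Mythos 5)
Your reduction is correct: in the definition of $A_f$ the factor $b_f$ indeed cancels, since $(b_f(t))^{-1}d_f(t)=\int_t^T \hat g(f(s)^\intercal\Sigma,f(s)^\intercal R I)\,\td s$, so the fixed-point problem is exactly the scalar backward equation $a^*(t)=\gamma^{-1}-\int_t^T\Psi(a^*(s))\,\td s$ with $\Psi(\beta):=\hat g(C_\beta^\intercal\Sigma,C_\beta^\intercal R I)$. This reformulation is cleaner than the paper's own set-up: the paper applies Schauder's theorem directly to $F$ on the convex set $\mathbb A$ of continuous $f$ with $f(T)=\gamma^{-1}$ and increments $f(t)-f(s)\in[\chi_-(t-s),\chi_+(t-s)]$, where $\chi_\pm$ are the infimum and supremum of $\hat g(c^\intercal\Sigma,c^\intercal R I)$ over $c\in\partial\mathcal B$; the asserted strict positivity of $\chi_-$ makes every $f\in\mathbb A$ strictly increasing, hence spending zero Lebesgue time at levels in the countable exceptional set $A$, which yields continuity of $F$ (dominated convergence plus monotonicity of the $F_f$) and relative compactness (Arzel\`a--Ascoli). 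Your ``zero traversal time'' observation for levels in $(\beta_0,\gamma^{-1}]\cap A$ is the same device, and your route (ii) is essentially the paper's argument without the strict-monotonicity safeguard.

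The genuine gap is your treatment of the threshold $\beta_0$, and the proposed regularisation cannot close it. Technically, a strictly convex $\hat g_\epsilon$ is no longer positively homogeneous, and strict convexity does not give uniqueness of maximisers over the \emph{non-convex} set $\partial\mathcal B$, so neither $A=\emptyset$ nor continuity of $\Psi_\epsilon$ follows. More fundamentally, the difficulty you flagged is not removable, because with the centroid convention the statement itself fails there: by positive homogeneity of $T_{\beta_0}$ and continuity of $\beta\mapsto\sup_{c\in\partial\mathcal B}T_\beta(c)$, the maximiser set at $\beta_0$ necessarily contains $0$ together with a point of the face $\{c:\sum_i c_i=1\}$, so $\beta_0\in A$, the centroid is nonzero, and $\Psi(\beta_0)>0$. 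Any fixed point is automatically non-decreasing and Lipschitz, and when $T$ is large it must hit $\beta_0$ at some $t_0>0$; below $t_0$ it can neither stay at $\beta_0$ (then $F$ applied to it decreases at rate $\Psi(\beta_0)>0$ there) nor drop strictly below (then the integrand vanishes on $[0,t_0)$ and $F$ returns the constant $\beta_0$), so no fixed point exists. Concretely, for $n=d=k=1$, $\hat g=g_1$, $\mu>r$, $\gamma^{-1}>a_-=\sqrt{\Sigma^2+R^2\nu_2}/(\mu-r)$ and $T>(\gamma^{-1}-a_-)/\sqrt{\Sigma^2+R^2\nu_2}$, one has $\Psi=0$ on $[0,a_-)$, $\Psi=\sqrt{\Sigma^2+R^2\nu_2}$ above $a_-$, but $\Psi(a_-)=\tfrac12\sqrt{\Sigma^2+R^2\nu_2}$, and no continuous fixed point of $F$ exists; the limit of your regularised solutions would be flat at level $a_-$ and is therefore \emph{not} a fixed point of the original $F$. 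The claim becomes true --- and then your route (i) gluing proves it --- only if the tie-breaking rule is changed, e.g.\ to select the $\hat g$-minimal maximiser (namely $0$) at such ties. Note that you have located a soft spot of the paper's own proof as well: its claim $\chi_->0$ is false, since $0\in\partial\mathcal B$ and $\hat g(0,0)=0$, and with $\chi_-=0$ the paper's continuity argument for $F$ on $\mathbb A$ breaks down at precisely the same place.
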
 
The proof of Lemma~\ref{lem:faux} is provided below.
With this result in hand we identify an equilibrium policy as follows:
\begin{theorem}\label{thm:opt} With $T_a(c)$ and $a^*$ given in \eqref{Ta} and
in Lemma~\ref{lem:faux}, we let $s(a):= \sup_{c\in\partial\mathcal B}T_a(c)$,\,%
$a_-:=\sup\{a\in[0,\gamma^{-1}]: s(a)\leq 0\}$, 
and $t^* := \sup\{t\in[0,T]: a^*(t) \leq a_-\}$ (where $\sup\emptyset:=-\infty$).

\noindent{\bf (i)} If $s(1/\gamma)\leq 0$ then $\pi^* \equiv 0$ with value-function given by $V(t,x)=x\,\exp(r(T-t))$ 
for $(t,x)\in[0,T]\times\mathbb R_+$.

\noindent{\bf (ii)} If $s(1/\gamma)>0$ define the function $C^*:[0,T]\to\mathcal B$ by 
$$
C^*(t) = 
\begin{cases}
C_{a^*(t)}, & \text{if $t\in[t^*\vee 0,1]$,}\\
0,  &  \text{otherwise,}
\end{cases}
$$
where $C_{a^*(t)}$ is given in \eqref{Cf} with $f=a^*$.
Then $\pi^* = C^*$  
is an equilibrium policy with value function given by
$V(t,x) = x(b_{C^*}(t) - \gamma d_{C^*}(t))$ 
for $(t,x)\in[0,T]\times\mathbb R_+$, where 
$b_{C^*}$ and $d_{C^*}$ are given in \eqref{bf} and \eqref{df} with $f=C^*$.
\end{theorem}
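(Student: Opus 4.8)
The plan is to exhibit an explicit solution $(\pi_*,h,V)$ of the extended HJB system \eqref{H1}--\eqref{H2} and then invoke the verification Theorem~\ref{thm:V}. Motivated by the positive homogeneity of $\hat g$ and the linearity of \eqref{Xpi} in $X^\pi$, I would look for solutions of the separable form $V(t,x)=x\,v(t)$ and $h(t,x)=x\,u(t)$ with $v,u:[0,T]\to\R_+$. Substituting a linear test function $f(t,x)=x\,w(t)$ into \eqref{Lp}--\eqref{Gp} one finds $f''\equiv 0$ and $f(t,x+x\pi^\intercal Ry)-f(t,x)-x\pi^\intercal Ry\,f'(t,x)=0$, so the compensated jump integral in $\mathcal L^\pi$ vanishes and $\mathcal L^\pi(xw)(t,x)=\mu_\pi\,x\,w(t)$, while by homogeneity of $\hat g$ one gets $\mathcal G^\pi(xu)(t,x)=x\,u(t)\,\hat g(\pi^\intercal\Sigma,\pi^\intercal RI)$. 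Dividing out $x>0$, equation \eqref{HH1} becomes the linear ODE $\dot u(t)+\mu_{\pi_*(t)}u(t)=0$, $u(T)=1$, whose solution is exactly $u=b_{C^*}$ in \eqref{bf} once $\pi_*=C^*$ is identified, and \eqref{H1} collapses to the scalar equation $\dot v(t)+\sup_{\pi\in\mathcal B}\{\mu_\pi v(t)-\gamma u(t)\hat g(\pi^\intercal\Sigma,\pi^\intercal RI)\}=0$, $v(T)=1$.

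The next step is to evaluate the pointwise supremum. Writing $a(t):=v(t)/(\gamma u(t))$ and using $\mu_\pi=r+(\mu-r\mathbf 1)^\intercal\pi$, the bracket equals $rv(t)+\gamma u(t)\,T_{a(t)}(\pi)$ with $T_a$ as in \eqref{Ta}. Since both $(\mu-r\mathbf 1)^\intercal\pi$ and $\hat g(\pi^\intercal\Sigma,\pi^\intercal RI)$ are positively homogeneous of degree one in $\pi$, the map $\pi\mapsto T_a(\pi)$ is positively homogeneous, and as $\mathcal B$ is the simplex $\{\pi\ge 0,\ \mathbf 1^\intercal\pi\le 1\}$ containing the origin, every ray from $0$ leaves $\mathcal B$ through its outer facet; hence $\sup_{\pi\in\mathcal B}T_a(\pi)=s(a)$, which is nonnegative because $0\in\partial\mathcal B$ and $T_a(0)=0$. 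The maximiser is $\pi=0$ when $s(a)\le 0$ and the unique boundary point $C_a$ of Assumption~\ref{AS2} (for $a\notin A$, the countable exceptional set being covered by the centroid convention in \eqref{Cf}) when $s(a)>0$. This identifies the feedback maximiser as $\pi_*(t)=C^*(t)$.

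I would then close the system self-consistently. Computing the representing pair of $X^*_T$ as in Theorem~\ref{thm:V}, the linearity of $h$ and homogeneity of $\hat g$ yield $\tilde D_{t,x}(X^*_T)=x\,b_{C^*}(t)\int_t^T\hat g(C^*(s)^\intercal\Sigma,C^*(s)^\intercal RI)\,\td s=x\,d_{C^*}(t)$, using the telescoping identity $b_{C^*}(s)\,\mathbb E_{t,x}[X^*_s]/x=b_{C^*}(t)$; hence $v=b_{C^*}-\gamma d_{C^*}$, $u=b_{C^*}$, and $a=v/(\gamma u)=A_{C^*}$. The crucial simplification is that the factor $b_f$ cancels, so $A_f(t)=\gamma^{-1}-\int_t^T\hat g(f(s)^\intercal\Sigma,f(s)^\intercal RI)\,\td s$, and the requirement $\pi_*(t)=C_{a(t)}$ is precisely the fixed-point relation $a=F_a$ solved by the non-decreasing $a^*$ of Lemma~\ref{lem:faux}. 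Taking $a=a^*$ makes the ansatz a genuine solution: in case (ii), the monotonicity of $s$ and $a^*$ together with the cancellation in $A_f$ show that on $[0,t^*)$ one has $a(t)\equiv a_-$ with $s(a_-)=0$, so that $\pi=0$ is optimal there, matching the definition of $C^*$; in case (i), $s(\gamma^{-1})\le 0$ and monotonicity give $s(a^*(t))\equiv 0$, whence $C^*\equiv 0$, $d_{C^*}\equiv 0$, and $v=b_{C^*}(t)=e^{r(T-t)}$.

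Finally I would check the hypotheses of Theorem~\ref{thm:V}: $V(t,x)=x\,v(t)$ and $h(t,x)=x\,b_{C^*}(t)$ lie in $C^{1,2}([0,T]\times\R_+,\R)$ with $V'=v$ and $h'=b_{C^*}$ bounded on the compact interval $[0,T]$ (continuity of $a^*$ yields a.e.\ continuity and boundedness of $C^*$, so that $b_{C^*},d_{C^*}\in C^1$), and that $\pi^*=(C^*(t))_{t\in[0,T]}$ is admissible (it is deterministic, hence predictable; it is $\mathcal B$-valued, so the short-sale and borrowing constraints hold; and the jump-support condition $\nu(\{\min_i x_i\le-1\})=0$ with the row-sum bound $\sum_j\rho_{ij}\le 1$ keeps $X^{\pi^*}$ non-negative). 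Invoking Theorem~\ref{thm:V} then gives that $\pi^*$ is an equilibrium policy with the stated value function. The main obstacle is the self-consistent determination of the scalar process $a^*$ together with the verification that the HJB supremum is attained at the claimed policy across the free boundary $t^*$; the cancellation of $b_f$ in $A_f$, which reduces the fixed point to the integral equation handled by Lemma~\ref{lem:faux}, is what makes this tractable.
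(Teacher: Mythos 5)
Your proposal is correct and follows essentially the same route as the paper: a verification argument that the triplet $(\pi_*,V,h)$ with the linear ansatz $V(t,x)=x(b_{C^*}(t)-\gamma d_{C^*}(t))$, $h(t,x)=x\,b_{C^*}(t)$ solves the extended HJB system \eqref{H1}--\eqref{H2}, reducing the supremum to $\mathrm{arg\,sup}_{\pi\in\mathcal B}\{\mu_\pi A_{C^*}(t)-\hat g(\pi^\intercal\Sigma,\pi^\intercal R I)\}$, identifying $A_{C^*}=a^*$ on $[t^*,T]$ and $A_{C^*}\equiv a_-$ before $t^*$ via the fixed point of Lemma~\ref{lem:faux}, and then invoking Theorem~\ref{thm:V}. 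Your write-up is in fact somewhat more complete than the paper's (explicit derivation of the ODEs for $b_{C^*},d_{C^*}$, the homogeneity argument reducing $\sup_{\mathcal B}T_a$ to $s(a)$, and the admissibility/regularity checks for Theorem~\ref{thm:V}), but these are elaborations of the same steps rather than a different method.
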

\begin{Remark}
Under the equilibrium policy $\pi^*$ given in Theorem~\ref{thm:opt} it is optimal to invest in the $n$ stocks 
according to the proportions $C^*=(C^*_1, \ldots, C^*_n)$ of the current wealth, 
which are non-random functions of $t$ only. 
Hence, it is optimal to invest at time $t$ an amount 
$X^{\pi^*}(t-)C_i^*(t)$ in stock $i$, $i=1, \ldots, n$. 
\end{Remark}
\begin{proof}{\ of Theorem~\ref{thm:opt}}
The proof consists in verifying that the triplet $(\pi_*,V,h)$, with $\pi_*$ 
and $V$ as stated and
with $h:[0,T]\times\mathbb R_+\to\mathbb R$ given by  $h(t,x)=x\, b_{C^*}(t)$, 
satisfies the extended HJB equation~\eqref{H1}--\eqref{H2}; the assertions then follow 
by an application of Theorem~\ref{thm:V}.

\noindent{\bf (i)} Once we verify that the supremum in \eqref{H1} is attained at $\pi_*\equiv 0$ 
it is easily checked that $V$ and $h$ are equal and satisfy \eqref{H1}--\eqref{H2}, using that $g$ is positively homogeneous. 
To see that the former is the case note that the left-hand side of \eqref{H1} is equal to 
$x\, \exp(r(T-t))\,[-r + \gamma\sup_{c\in\mathcal B}T_{1/\gamma}(c)]$; 
since $s(1/\gamma)\leq 0$, the latter supremum is zero and it is attained at $c=0$ (as $T_{1/\gamma}(0)=0$).

\noindent{\bf (ii)} Assume for the moment that the supremum in \eqref{H1}is attained at $\pi^*$. 
Then the positive homogeneity of $g$ and the fact (which is straightforward to verify) 
that functions $b_{C^*}$ and $d_{C^*}$ satisfy the 
system of equations
\begin{eqnarray*}
&& \dot{b} + (r + \mu_{C^*}) b = 0, \q t\in[0,T),\q b(T) = 1,\\
&& \dot{d} + (r + \mu_{C^*}) d 
+ b \hat g((C^{*})^\intercal\Sigma, (C^{*})^\intercal R I) = 0, \q t\in[0,T),\q d(T) = 0,
\end{eqnarray*}
where as before $I:\mathbb R^{k\times 1}\to\mathbb R^{k\times 1}$ is given by $I(y)=y$, imply that 
$h$ and $V$ satisfy \eqref{H1}--\eqref{H2}. 

Next we verify that the supremum in \eqref{H1} 
is attained at $\pi_*$. Inserting the forms of $h$ and $V$ 
and using that $\gamma\, \inf_{t\in[0,T]} b_{C^*}(t)>0$
we have for any $t\in[0,T]$ that 
\begin{eqnarray}\nonumber
\text{arg}\,\sup_{\pi\in\mathcal B}\{\mathcal L^{\pi}V(t,x) - 
\gamma \mathcal G^{\pi}h(t,x)\}
&=& \text{arg}\,\sup_{\pi\in\mathcal B}\{\mu_{\pi}(b_{C^*}(t) - \gamma d_{C^*}(t)) 
-\gamma b_{C^*}(t) \hat g(\pi^{\intercal}\Sigma, \pi^{\intercal}R\, I)\}\\
&=& \text{arg}\,\sup_{\pi\in\mathcal B}\{\mu_{\pi} A_{C^*}(t) 
- \hat g(\pi^{\intercal}\Sigma, \pi^{\intercal}R\, I)\}.
\label{arg}
\end{eqnarray}
If $t\leq t^*$, then  $A_{C^*}(t) = a_-$ so that $s(A_{C^*}(t))\leq 0$ and $0$ is included in the argsup in \eqref{arg}, 
while if $t>t^*$, then $A_{C^*}(t)> a_-$ and we have that $s(A_{C^*}(t)) = \sup_{\pi\in\mathcal B}\{(\mu_{\pi}-r) A_{C^*}(t) 
- \hat g(\pi^{\intercal}\Sigma, \pi^{\intercal}R\, I)\}>0$ is attained at 
$\pi=C_{A_{C^*}}(t) = C_{a^*(t)} = C^*(t)$.
\end{proof}
\begin{proof}{\ of Lemma~\ref{lem:faux}}
The proof relies on an application of Schauder's fixed point theorem\footnote{see {\em e.g.} Theorem 1.C in 
Zeidler (1995)} to the map 
$F: \mathbb A\to C([0,T],\mathbb R)$ given by $f\mapsto F_f$, where $\mathbb A$ denotes the set 
of continuous functions $f\in C([0,T],\mathbb R)$ 
that are such that (a) $f(T) = \gamma^{-1}$ and (b) for all $s,t\in[0,T]$ with $s\leq t$ we have 
$f(t)-f(s)\in[\chi_-(t-s), \chi_+ (t-s)]$ where
\begin{eqnarray*}
\chi_+ := \sup\{\hat g(c^\intercal\Sigma, c^\intercal R I) : c\in\partial\mathcal B\},\quad 
\chi_- := \inf\{\hat g(c^\intercal\Sigma, c^\intercal R I) : c\in\partial\mathcal B\}. 
\end{eqnarray*}
We note that both $\chi_+$ and $\chi_-$ are strictly positive, by positivity of the driver function $\hat g$.
It is straightforward to verify that $F$ maps $\mathbb A$ to $\mathbb A$ 
and that  the set $\mathbb A$ is a non-empty, closed, bounded and convex subset of $C([0,T],\mathbb R)$. 
Since $F$ is compact (as we prove below), Schauder's fixed point theorem yields that there exists 
an element $a^*\in\mathbb A$ such that $a^* = F_{a^*}$.

We next prove that $F$ is compact by showing that {\bf (i)} $F$ is continuous (with respect to the supremum norm on $[0,T]$) 
and {\bf (ii)} the set $F(\mathbb A) = \{F_f: f\in\mathbb A\}$ is relatively compact in $C([0,T],\mathbb R)$. 

\noindent{\bf (i)} Let $(f_n)_n\subset\mathbb A$ converge to $f\in \mathbb A$ in the supremum-norm. Then we have that 
$T_{f_n(t)}(c)\to T_{f(t)}(c)$ as $n\to\infty$ uniformly in $t\in[0,T]$ for any $c\in\partial\mathcal B$, and 
$\sup_{c\in\partial\mathcal B}T_{f_n(t)}(c)\to \sup_{c\in\partial\mathcal B}T_{f(t)}(c)$ for any $t\in[0,T]$. 
As $(f_n)_n$ and $f$ are strictly monotone increasing and  Assumption~\ref{AS2} is in force, we have for all but 
countably many $t$ that $T_{f_n(t)}(c)$ and $T_{f(t)}(c)$ attain their maxima over $\partial B$ at unique $c$. 
Thus, it follows that $\mathrm{arg}\,\sup_{c\in\partial\mathcal B}T_{f_n(t)}(c)\to \mathrm{arg}\,\sup_{c\in\partial\mathcal B}T_{f(t)}(c)$, 
for a.e. $t\in[0,T]$. Hence, by the dominated convergence theorem $F_{f_n}(t) = A_{C_{f_n}}(t)\to A_{C_f}(t)=F_f(t)$ for any $t\in[0,T]$. Since the functions $A_{C_{f_n}}$ and $A_{C_f}$ are non-decreasing, the convergence $F_{f_n}\to F_f$ holds 
in the supremum norm.

\noindent{\bf (ii)}  
Using the boundedness of $\mathcal B$ and the continuity of $\hat g$ it is straightforward to verify that
the collection of functions $F(\mathbb A)$ is equi-continuous. 
Hence we have by an application of 
the Arzela-Ascoli theorem\footnote{see {\em e.g.} p.35 in Zeidler (1995)} that for any sequence 
$(A^{(n)})_n\subset F(\mathbb A)$  there exists a continuous function $A^*:[0,T]\to\mathbb R$ such that, along a subsequence 
$(n_k)$,  $(A^{(n_k)})_k$ converges uniformly to $A^*$, hence establishing that $F(\mathbb A)$ 
is relatively compact. 
\end{proof}
\begin{Example}{\bf (i)} For driver function $\hat g=g_1$ (given in Example~\ref{D3} with $\lambda=1$)  
and for $a\in\mathbb R_+$ we have that $T_a(c)$ in \eqref{Ta} is given by
\begin{eqnarray*}
T_a(c) = a\,(\mu - r\mathbf 1)^\intercal c - \sqrt{c^{\intercal}\Sigma\Sigma^{\intercal} c + c^{\intercal}RR^{\intercal} c\nu_2}.
\end{eqnarray*}
If $\Sigma\Sigma^{\intercal} + RR^{\intercal}\nu_2$ is invertible,
then it is straightforward to verify that Assumption~\ref{AS2} is satisfied. 

\noindent{\bf (ii)} Let us identify explicitly the equilibrium portfolio allocation strategy given in Theorem~\ref{thm:opt}
in the case the driver function $\hat g$ is as in part (i) and we have $2$ risky assets ($n=2$), 
whose dynamics we suppose are given by \eqref{Si1} with
$d=k=2$, $\mu_1>\mu_2>r$, $r\ge 0$ and $s_{12}:= (\Sigma^2 + R^2\nu_2)_{12} < 0$. 
In terms of $s_i^2 := (\Sigma^2 + R^2\nu_2)_{ii}$, $i=1,2$, 
let us denote 
\begin{eqnarray*}
&&d_+ := s_1^2 + s_2^2 - 2 s_{12},\quad e_+ := s_{12} - s_2^2,\\ 
&&c_+(a) := - \frac{e_+}{d_+} + \sqrt{\left(\frac{e_+}{d_+}\right)^2 - \eta(a)},\quad
\eta(a) := \frac{a^2(\mu_1-\mu_2)^2 s_2^2 - e_+^2}{d_+(a^2(\mu_1-\mu_2)^2 - d_+)},
\end{eqnarray*}
for $a\in[0,\sqrt{d_+}/(\mu_1-\mu_2))$. By convexity of $g$ it follows that 
the supremum of $\tilde T(c):= T_a((c,1-c)) = a(\mu_2-r) + a(\mu_1-\mu_2) c - \sqrt{d_+ c^2 + 2e_+ c + s^2_2}$
over $c\in\mathbb R$ is attained at the $c$ satisfying 
$\tilde T'(c)=0 \Leftrightarrow c = c_+(a)$ and we have 
$$\tilde T'(1)>0 \Leftrightarrow a > a_+ := 
\frac{1}{\mu_1 - \mu_2}\left(\frac{s_1^2 - s_{12}}{\sqrt{s_1^2}}\right).$$ 
As a consequence, the equilibrium allocation strategy $\pi^*=(\pi^*_t)_{t\in[0,T]}$ in 
Theorem~\ref{thm:opt} is given as follows:
\begin{eqnarray*}
\pi^*_t = C^*(t) = 
\begin{cases}
(1,0), & \text{if }\, a_*(t) > a_- \vee a_+,\\
(c_+(a_*(t)), 1 - c_+(a_*(t))), & \text{if }\, a_- < a_*(t) \leq a_-\vee 
a_+,\\ 
(0,0), & \text{if }\, a_*(t) \leq a_-,
\end{cases}
\end{eqnarray*}
where $a_-$ and $a_*(t)$ are as in Theorem~\ref{thm:opt}.  
Hence, if the risk-aversion parameter $\gamma$ is sufficiently small 
and/or $t$ is sufficiently close to the horizon $T$ the equilibrium strategy is to be fully invested in 
risky asset 1, which has the highest expected return; at times $t$ further away from the horizon or for higher 
risk-aversion parameter, the dynamic deviation penalty term starts to play a more important role and the policy 
is to invest part of the wealth into asset 2, 
while, if $\gamma$ is sufficiently large or $t$ is sufficiently small, the equilibrium strategy 
is to invest all the wealth in the bank account.

\noindent{\bf (iii)} Restricting next to the case of a single risky asset ($n=1$) with 
$d=k=1$, $\mu:=\mu_1>r$, we find by a direct calculation that the value function $V$ in Theorem~\ref{thm:opt}
and he auxiliary function $h$
are explicitly given in terms of 
$$t^* = \left(T + \frac{1}{\mu - r} - \frac{1}{\gamma \sqrt{\Sigma^2 + R^2\nu_2}}\right) \wedge T$$
by $V(t,x) = V(t^*\wedge T, x\,\exp\{r(t^*\wedge T - t)\})$ and 
$h(t,x) = h(t^*\wedge T, x\,\exp\{r(t^*\wedge T - t)\})$
for $t\in[0, t^* \wedge T)$ and
\begin{eqnarray*}
&&V(t,x) = h(t,x)[1 - (T-t)\gamma\sqrt{\Sigma^2 + R^2\nu_2}], \q h(t,x) = x \exp\{\mu(T-t)\},\q t\in[t^*\wedge T,T],
\end{eqnarray*}
where the equilibrium policy $\pi^*$ is given by
\begin{eqnarray*}
&&\pi^*_t = C^*(t) = \begin{cases}
1, & \text{if $a(t) = \frac{1}{\gamma}\, \frac{1}{1 + (\mu - r)(T-t)} > 
\frac{\sqrt{\Sigma^2 + R^2\nu_2}}{\mu - r} = a_- \Leftrightarrow t\in(t^*\wedge T,T]$,}\\
0, & \text{if $a(t)\leq a_- \Leftrightarrow t\in[0,t^*\wedge T]$}.
\end{cases}
\end{eqnarray*}
To see that $\pi^*$ takes this form we observe that 
$t\leq t^*$ holds precisely if  $\left(\mu -r - \gamma\, \sqrt{\Sigma^2 + R^2\nu_2}\right) 
- (\mu - r)\gamma(T-t)\sqrt{\Sigma^2 + R^2\nu_2} \leq 0$ 
$\Leftrightarrow$ $0 \in \operatornamewithlimits{arg.\,sup}_{\pi\in[0,1]}\{(\mathcal L^{\pi}V)(t,x) - 
\gamma (\mathcal G^{\pi}h)(t,x)\}$, where
$\mathcal L^{\pi}$ and $\mathcal G^{\pi}$ are given in \eqref{Lp} and \eqref{Gp}.
\end{Example}

\noindent{\bf Acknowledgements.} MP acknowledges support in part by EPSRC grant EP/I019111/1. 
MS acknowledges support by NWO VENI 2012.

{\footnotesize 

}


\begin{thebibliography}{99}

\baselineskip 0.37 cm

\bibitem{ADEH99}
{\sc Artzner, Ph., F. Delbaen, J.-M. Eber, and D. Heath} (1999).
Coherent measures of risk. {\it Mathematical Finance} 9, 203-228.

\bibitem{AD} {\sc Artzner, Ph., F. Delbaen, J.-M. Eber, D. Heath, and  H. Ku} (2007). 
Coherent multiperiod risk adjusted values and Bellman’s principle. {\it Annals of Operations Research} 152, 5-22.

\bibitem{AA}
{\sc Aumann, R.} (1969). Measurable utility and the measurable choice theorem.
{\it Editions du Centre Nat. Recherche Sci.} 15–26.

\bibitem{BE2}
{\sc Barrieu, P. and N. El Karoui} (2005). Inf-convolution of risk measures and
optimal risk transfer. {\it Finance and Stochastics} 9, 269-298.

\bibitem{BE}
{\sc Barrieu, P. and N. El Karoui} (2009). Pricing, Hedging and Optimally
Designing Derivatives via Minimization of Risk Measures. {\it Indifference
Pricing: Theory and Applications} (ed: R. Carmona), Princeton University Press.

\bibitem{BCC}
{\sc Basak, S., and G. Chabakauri} (2010). Dynamic mean-variance asset allocation. 
{\it Review of Financial Studies} 23, 2970-3016.


\bibitem{BWYY}
{\sc Bensoussan, A., K. C. Wong, S. C. P. Yam, and S. P. Yung} (2014). 
Time-consistent Portfolio Selection under Short-Selling Prohibition: From Discrete to 
Continuous Setting. {\it SIAM Journal of Financial Mathematics} 5, 153-190.

\bibitem{BM}
{\sc Bion-Nadal, J. and K. Magali} (2012).
Risk measuring under model uncertainty.
{\it The Annals of Applied Probability} 22, 213-238.

\bibitem{BMo}
{\sc Bj\"{o}rk, T. and A. Murgoci} (2010). A general theory of Markovian Time Inconsistent Stochastic Control Problems.
{\it Working paper, Stockholm School of Economics}. 

\bibitem{BMZ}
{\sc Bj\"{o}rk, T., A. Murgoci and X. Y. Zhou} (2014). 
Mean-variance portfolio optimisation with state-dependent risk-aversion. {\it Mathematical Finance} 24, 1-24.

\bibitem{BS} {\sc Black, F. and M. Scholes} (1973). 
The pricing of options and corporate liabilities. {\it Journal of Political Economy} 81, 637--654.


\bibitem{CE} {\sc Chen, Z. and L. Epstein } (2002). Ambiguity, risk, and asset returns in continuous time. {\it Econometrica}  1403-1443.

\bibitem{CLS}
 {\sc Cheng, S., Y. Liu, and W. Shouyang} (2004). Progress in Risk Measurement. {\it Advanced Modelling and Optimization} 6, 1-20.

\bibitem{CK}
{\sc Cheridito, P. and M. Kupper} (2011). Composition of time-consistent dynamic monetary risk measures in discrete time.
{\it International Journal of Theoretical and Applied Finance} 14, 137-162.

\bibitem{C}
	{\sc Czichowsky, C.} (2013). Time-consistent mean-variance portfolio selection in discrete and continuous time. {\it Finance and Stochastics} 17, 227-271.

\bibitem{CHM}
{\sc Coquet, F., Y. Hu, J. M\'emin and S. Peng} (2002). Filtration-consistent
nonlinear expectations and related g-expectations. {\it Probability Theory and
Related Fields} 123, 1-27.

\bibitem{DD}
{\sc Delbaen, F.} (2006). {The Structure of m-stable Sets and in Particular of the Set of Risk Neutral Measures.} {\it In Memoriam Paul-Andr\'e Meyer} (pp. 215-258). Springer Berlin Heidelberg.

\bibitem{DPR10}
{\sc Delbaen, F., S. Peng, and E. Rosazza Gianin} (2010).
Representation of the penalty term of dynamic concave utilities.
{\it Finance and Stochastics} 14, 449-472.


\bibitem{D}
{\sc Durrett, R.} (2010) {\it Probability: Theory and Examples}. Cambridge University Press. 

\bibitem{EP}
{\sc Ekeland, I. and T. A. Pirvu} (2008). Investment and consumption without commitment. 
{\it Mathematics and Financial Economics} 2, 57-86.

\bibitem{ER09}
 {\sc El Karoui, N. and C. Ravanelli} (2009). Cash subadditive risk measures
 and interest rate ambiguity. {\it Mathematical Finance} 19, 561-590.



\bibitem{FS04}
{\sc F\"ollmer, H. and A. Schied} (2011). {\it Stochastic Finance.
} 3rd ed., De Gruyter, Berlin.





\bibitem{GMZ}
{\sc Grechuk, B., A. Molyboha, and M. Zabarankin} (2009). Maximum entropy
principle with general deviation measures. {\it Mathematics of Operations
Research} 34, 445-467.

\bibitem{GM}
{\sc Grechuk, B., A. Molyboha, and M. Zabarankin} (2013). 
Cooperative games with general deviation measures. {\it Mathematical Finance} 23, 339-365.

\bibitem{GZ}
{\sc Grechuk, B. and M. Zabarankin} (2014). 
Inverse portfolio problem with mean-deviation model. {\it European Journal of Operational Research} 234, 481-490.

\bibitem{HJ}
{\sc Hu, Y., H. Jin, and X.Y. Zhou.} (2012). Time-Inconsistent Stochastic Linear--Quadratic Control. {\it SIAM Journal on Control and Optimization} 50, 1548-1572.

\bibitem{JS}
{\sc Jacod, J. and A. Shiryaev} (2013). {\it Limit theorems for stochastic processes.} Vol. 288, Springer Science \ Business Media.

\bibitem{J08}
{\sc Jiang, L.} (2008). Convexity, translation invariance and subadditivity for
g-expectations and related risk measures. {\it The Annals of Applied
Probability} 18, 245-258.

\bibitem{KS07}
{\sc Kl\"oppel, S. and M. Schweizer} (2007). Dynamic indifference
valuation via convex risk measures. {\it Mathematical Finance} 17,
599-627.

\bibitem{LZL}
{\sc Li, Z., Y. Zeng, Y., and Y. Lai} (2012). Optimal time-consistent investment and reinsurance strategies for insurers under Heston’s SV model. {\it Insurance: Mathematics and Economics} 51, 191-203.

\bibitem{MS}
{\sc M\"arkert, A. and R. Schultz} (2005). On deviation measures in stochastic
integer programming. {\it Operations Research Letters} 33, 441-449.

\bibitem{M} 
{\sc Markowitz, H.} (1952) Portfolio selection. {\it Journal of Finance} 7, 77-91.


\bibitem{PP90}
{\sc Pardoux, E. and S. Peng} (1990). Adapted solution of a backward
stochastic differential equation. {\it Systems and Control Letters}
14, 55-61.

\bibitem{PS14}
{\sc Pelsser, A. and M. Stadje} (2014). Time-consistent and market-consistent
evaluation. {\it Mathematical Finance} 24, 25-62.


\bibitem{RD}
{\sc Riedel, F.} (2004). Dynamic coherent risk measures. {\it Stochastic Processes and Their Applications} 112, 185–200

\bibitem{RUZZ}
{\sc Rockafellar, R. T., S.P. Uryasev, and M. Zabarankin} (2006a). Generalized
deviations in risk analysis. {\it Finance and Stochastics} 10, 51-74.

\bibitem{RUZ}
{\sc Rockafellar, R. T., S.P. Uryasev, and M. Zabarankin} (2006b). Optimality
conditions in portfolio analysis with general deviation measures. {\it
Mathematical Programming} 108, 515-540.

\bibitem{RUU}
{\sc Rockafellar, R.T., S.P. Uryasev, and M. Zabarankin} (2006c). Master funds
in portfolio analysis with general deviation measures. {\it Journal of Banking
	\& Finance} 30, 743-778



\bibitem{RU}
{\sc Rockafellar, R. T., S.P. Uryasev, and M. Zabarankin} (2007). Equilibrium
with investors using a diversity of deviation measures. {\it Journal of Banking
\& Finance} 31, 3251-3268.

\bibitem{RG}
{\sc Rosazza Gianin, E.} (2006). Risk measures via g-expectations. {\it
Insurance: Mathematics and Economics} 39, 19-34.

\bibitem{R}
{\sc Royer, M.} (2006). Backward stochastic differential equations with jumps
and related non-linear expectations. {\it Stochastic processes and their
Applications} 116, 1358-1376.

\bibitem{RS}
{\sc Ruszczy\'nski, A. and A. Shapiro} (2006). Conditional risk mappings. {\it Mathematics of Operations Research} 31, 544–561.


\bibitem{SS}
{\sc Stoyanov, S. V., S.T. Rachev, S. Ortobelli, and F.J. Fabozzi} (2008). Relative deviation metrics and the problem of strategy replication. {\it Journal of Banking
\& Finance} 32, 199-206.

\bibitem{W}
{\sc Wang, J., and P. A. Forsyth} (2011). Continuous time mean variance asset allocation: A time-consistent strategy. 
{\it European Journal of Operational Research} 209, 184-201.

\bibitem{Z}
{\sc Zalinescu, C.} (2002). {\it Convex analysis in general vector spaces}. Singapore: World Scientific.

\bibitem{ZZ}
{\sc Zeidler, E.} (1995). {\it Applied Functional Analysis. Applications to Mathematical Physics.}
Applied Mathematical Sciences, Vol. 108, Springer.
\end{thebibliography}
\end{document}